\newtheorem{theorem}{Theorem}[section]
\newtheorem{lemma}[theorem]{Lemma}
\newtheorem{corollary}[theorem]{Corollary}
\newenvironment{proof}[1][Proof]{\begin{trivlist}
\item[\hskip \labelsep {\bfseries #1}]}{\end{trivlist}}
\newenvironment{example}[1][Example]{\begin{trivlist}
\item[\hskip \labelsep {\bfseries #1}]}{\end{trivlist}}
\newcommand{\qed}{\nobreak \ifvmode \relax \else      \ifdim\lastskip<1.5em \hskip-\lastskip
\hskip1.5em plus0em minus0.5em \fi \nobreak
\vrule height0.75em width0.5em depth0.25em\fi}
\begin{document}

\setlength{\unitlength}{0.1mm}

\newcommand{\balpha}{ \alpha^{\triangleleft} }

\newcommand{\bbeta}{ \beta^{\triangleleft}}

\newcommand{\lroof}{\mbox{\begin{picture}(3,0)
                                \put(0,0){\line(0,1){10}}

                                \put(0,10){\line(1,0){3}}

                           \end{picture}}}

\newcommand{\rroof}{\mbox{\begin{picture}(3,0)
                                \put(3,0){\line(0,1){10}}

                                \put(3,10){\line(-1,0){3}}

                           \end{picture}}}

\newcommand{\longlroof}{\mbox{\begin{picture}(3,0)
                                \put(0,-4){\line(0,1){14}}

                                \put(0,10){\line(1,0){3}}

                           \end{picture}}}

\newcommand{\longrroof}{\mbox{\begin{picture}(3,0)
                                \put(3,-4){\line(0,1){14}}

                                \put(3,10){\line(-1,0){3}}

                           \end{picture}}}

\newcommand{\wedgeline}{\mbox{\begin{picture}(0,0)
                                \put(-1.5,0){$\wedge$}
                                \put(-2,3){\line(0,-1){3}}
                              \end{picture}}}
\newcommand{\Boox}{\mbox{\begin{picture}(0,0)
                                \put(0,-3){ \framebox(10,10){\tiny{$j-1$}} }
                            \end{picture}}}

\newcommand{\hooka}{\mbox{\begin{picture}(0,0)
                                \put(-5,-3){ \framebox(10,10){} }
                                \put(-5,-13){ \framebox(10,10){} }
                                \put(-5,-23){ \framebox(10,10){} }
                                \put(-5,-33){ \framebox(10,10){} }
                                \put(-5,-43){ \framebox(10,10){} }
                                \put(-5,-53){ \framebox(10,10){} }
                            \end{picture}}}

\newcommand{\hookb}{\mbox{\begin{picture}(0,0)
                                \put(-5,-3){ \framebox(10,10){} }
                                \put(5,-3){ \framebox(10,10){} }
                                \put(-5,-13){ \framebox(10,10){} }
                                \put(-5,-23){ \framebox(10,10){} }
                                \put(-5,-33){ \framebox(10,10){} }
                                \put(-5,-43){ \framebox(10,10){} }
                            \end{picture}}}

\newcommand{\hookc}{\mbox{\begin{picture}(0,0)
                                \put(-5,-3){ \framebox(10,10){} }
                                \put(5,-3){ \framebox(10,10){} }
                                \put(15,-3){ \framebox(10,10){} }
                                \put(-5,-13){ \framebox(10,10){} }
                                \put(-5,-23){ \framebox(10,10){} }
                                \put(-5,-33){ \framebox(10,10){} }
                            \end{picture}}}

\newcommand{\hookd}{\mbox{\begin{picture}(0,0)
                                \put(-5,-3){ \framebox(10,10){} }
                                \put(5,-3){ \framebox(10,10){} }
                                \put(15,-3){ \framebox(10,10){} }
                                \put(25,-3){ \framebox(10,10){} }
                                \put(-5,-13){ \framebox(10,10){} }
                                \put(-5,-23){ \framebox(10,10){} }
                            \end{picture}}}

\newcommand{\hooke}{\mbox{\begin{picture}(0,0)
                                \put(-5,-3){ \framebox(10,10){} }
                                \put(5,-3){ \framebox(10,10){} }
                                \put(15,-3){ \framebox(10,10){} }
                                \put(25,-3){ \framebox(10,10){} }
                                \put(35,-3){ \framebox(10,10){} }
                                \put(-5,-13){ \framebox(10,10){} }
                            \end{picture}}}

\newcommand{\hookf}{\mbox{\begin{picture}(0,0)
                                \put(-5,-3){ \framebox(10,10){} }
                                \put(5,-3){ \framebox(10,10){} }
                                \put(15,-3){ \framebox(10,10){} }
                                \put(25,-3){ \framebox(10,10){} }
                                \put(35,-3){ \framebox(10,10){} }
                                \put(45,-3){ \framebox(10,10){} }
                            \end{picture}}}

\setlength{\unitlength}{0.1mm}

\newcommand{\starbox}{\mbox{\begin{picture}(0,0)
                                \put(0,0){ \framebox(10,10){ }}
                                \put(6,2){$*$}
                           \end{picture}}}

\newcommand{\bbox}{\mbox{\begin{picture}(0,0)
                                \put(0,0){ \framebox(10,10){ }}
                                \put(6,2){$b$}
                           \end{picture}}}

\newcommand{\cbox}{\mbox{\begin{picture}(0,0)
                                \put(0,0){ \framebox(10,10){ }}
                                \put(6,2){$c$}
                           \end{picture}}}

\newcommand{\Done}{\mbox{\begin{picture}(0,0)
                                \put(-20,-23){ \framebox(10,10){ }}
                                \put(-10,-13){ \framebox(10,10){ }}
                                \put(-10,-3){ \framebox(10,10){ }}
                                \put(-10,-23){ \framebox(10,10){ }}
                            \end{picture}}}

\newcommand{\Dtwo}{\mbox{\begin{picture}(0,0)
                                \put(-20,-23){ \framebox(10,10){ }}
                                \put(-10,-23){ \framebox(10,10){ }}
                                \put(-20,-13){ \framebox(10,10){ }}
                                \put(-10,-13){ \framebox(10,10){ }}
                                \put(0,-13){ \framebox(10,10){ }}
                                \put(10,-13){ \framebox(10,10){ }}
                                \put(-20,-3){ \framebox(10,10){ }}
                                \put(-10,-3){ \framebox(10,10){ }}
                                \put(0,-3){ \framebox(10,10){ }}
                                \put(10,-3){ \framebox(10,10){ }}
                             \end{picture}}}

\newcommand{\Donedottwo}{\mbox{\begin{picture}(0,0)
                                \put(-20,-23){ \framebox(10,10){ }}
                                \put(-10,-23){ \framebox(10,10){ }}
                                \put(-20,-13){ \framebox(10,10){ }}
                                \put(-10,-13){ \framebox(10,10){ }}
                                \put(0,-13){ \framebox(10,10){ }}
                                \put(10,-13){ \framebox(10,10){ }}
                                \put(-20,-3){ \framebox(10,10){ }}
                                \put(-10,-3){ \framebox(10,10){ }}
                                \put(0,-3){ \framebox(10,10){ }}
                                \put(10,-3){ \framebox(10,10){ }}

                                \put(-30,-53){ \framebox(10,10){ }}
                                \put(-20,-43){ \framebox(10,10){ }}
                                \put(-20,-33){ \framebox(10,10){ }}
                                \put(-20,-53){ \framebox(10,10){ }}
                             \end{picture}}}

\newcommand{\Doneodottwo}{\mbox{\begin{picture}(0,0)
                                \put(-20,-23){ \framebox(10,10){ }}
                                \put(-10,-23){ \framebox(10,10){ }}
                                \put(-20,-13){ \framebox(10,10){ }}
                                \put(-10,-13){ \framebox(10,10){ }}
                                \put(0,-13){ \framebox(10,10){ }}
                                \put(10,-13){ \framebox(10,10){ }}
                                \put(-20,-3){ \framebox(10,10){ }}
                                \put(-10,-3){ \framebox(10,10){ }}
                                \put(0,-3){ \framebox(10,10){ }}
                                \put(10,-3){ \framebox(10,10){ }}

                                \put(-40,-43){ \framebox(10,10){ }}
                                \put(-30,-33){ \framebox(10,10){ }}
                                \put(-30,-23){ \framebox(10,10){ }}
                                \put(-30,-43){ \framebox(10,10){ }}
                             \end{picture}}}

\begin{center}

{\huge Skew Schur Functions of Sums of Fat Staircases}

\

{\LARGE Matthew Morin}

University of British Columbia

\

{Mathematics Subject Classification: 05E05}

\end{center}

\

\begin{center}
\textbf{Abstract}
\end{center}

We define a fat staircase to be a Ferrers diagram corresponding to a partition of the form $(n^{\alpha_n}, {n-1}^{\alpha_{n-1}},\ldots, 1^{\alpha_1})$, where $\alpha = (\alpha_1,\ldots,\alpha_n)$ is a composition, or the $180^\circ$ rotation of such a diagram. 
If a diagram's skew Schur function is a linear combination of Schur functions of fat staircases, we call the diagram a sum of fat staircases. 
We prove a Schur-positivity result that is obtained each time we augment a sum of fat staircases with a skew diagram. 
We also determine conditions on which diagrams can be sums of fat staircases, including necessary and sufficient conditions in the special case when the diagram is a fat staircase skew a single row or column.

\section{Introduction}

The Schur functions are perhaps best known as a basis of the ring of symmetric functions. 
As such, their structure constants $c_{\mu \nu}^{\lambda}$, commonly known as the \textit{Littlewood-Richardson coefficients}, which describe the products
\[ s_\mu s_\nu = \sum_{\lambda} c_{\mu \nu}^{\lambda} s_{\lambda}, \]
are of paramount importance to the structure of the ring. 
However, the development and study of Schur functions has not only demonstrated their importance in the study of symmetric functions, but also in several other areas of mathematics.

The Schur functions appear in the study of representation theorey. 
For instance, in the representations of the symmetric group, the Specht modules are in one-to-one correspondence with the Schur functions. 
Further, given two Specht modules $S^\mu$ and $S^\nu$ we have
\[ (S^\mu \otimes S^\nu)\uparrow^{S_n} = \bigoplus_{\lambda} c_{\mu \nu}^{\lambda} S^{\lambda}. \]

Schur functions are also intrinsic to the structure of the cohomology ring of the Grassmannian, where the Schubert classes are in correspondence to the Schur functions and the cup product of each pair $\sigma_\mu$, $\sigma_\nu$ of Schubert classes satisfies
\[ \sigma_\mu \cup \sigma_\nu = \sum_{\lambda} c_{\mu \nu}^{\lambda} \sigma_\lambda.\]

The ubiquitous Littlewood-Richardson coefficients are known to satisfy $c_{\mu \nu}^{\lambda} \geq 0.$ 
Thus each product $s_\mu s_\nu$ gives rise to a linear combination of Schur functions with non-negative coefficients. 
We call such expressions \textit{Schur-positive}.

In recent years, there has been significant interest in determining instances of Schur-positivity in expressions of the form
\[ s_{\mu} s_{\nu} - s_{\lambda} s_{\rho} \textrm{ } \textrm{ and } \textrm{ } s_{\lambda / \mu} - s_{\rho / \nu}.\]
A collection of work in this vein includes \cite{{g2},{complementcite},{lpp},{m},{mvw2}}.  
Each of these Schur-positive differences gives a set of inequalities that the corresponding Littlewood-Richardson coefficients must satisfy. 
In this paper we shall construct certain families of skew Schur functions and shall obtain certain Schur-positive differences.

\section{Preliminaries}

We begin by briefly introducing the various objects, notations, and results that this paper requires.  
A complete study can be found in sources such as \cite{sagan} or \cite{stanley}.

A \textit{partition} $\lambda$\label{def:lambda} of a positive integer $n$, written $\lambda \vdash n$, is a sequence of weakly decreasing positive integers $\lambda = (\lambda_1,\lambda_2, \ldots, \lambda_k)$ with $\sum_{i=1}^{k} \lambda_i = n$. 
We call each $\lambda_i$ a \textit{part} of $\lambda$, and if $\lambda$ has exactly $k$ parts we say $\lambda$ is of \textit{length} $k$ and write $l(\lambda)=k$.
When $\lambda \vdash n$ we will also write $| \lambda |=n$ and say that the \textit{size} of $\lambda$ is $n$. 

We shall use $j^r$ to denote the sequence $j,j,\ldots, j$ consisting of $r$ $j$'s.
Under this notation, we shall write $\lambda = (k^{r_k}, {k-1}^{r_{k-1}}, \ldots ,1^{r_1})$ \label{parts} for the partition which has $r_1$ parts of size one, $r_2$ parts of size two, \ldots,  and $r_k$ parts of size $k$.

We say $\alpha=(\alpha_1,\alpha_2,\ldots,\alpha_k)$ is a \label{comp} \textit{composition} of $n$ if each $\alpha_i$ is a positive integer and $\sum_{i=1}^{k} \alpha_i = n$.
As with partitions, we call each $\alpha_i$ a \textit{part} of $\alpha$, write $|\alpha|=n$ for the \textit{size} of $\alpha$, and if $\alpha$ has exactly $k$ parts we say $\alpha$ is of \textit{length} $k$ and write $l(\alpha)=k$.
If we relax the conditions to consider sums of non-negative integers, that is, allowing some of the $\alpha_i$ to be zero, then we obtain the concept of a \textit{weak composition} of $n$.

We may sometimes find it useful to treat partitions and weak compositions as vectors with non-negative integer entries.
When we write the vector ${z}=(z_1,z_2, z_3,\ldots, z_n)$ we shall mean the infinite vector \[{z}=(z_1,z_2, z_3,\ldots, z_n, 0, 0, 0, \ldots).\]
We shall only consider vectors with finitely many non-zero entries. 
Hence we shall only display vectors with finite length. 
In this manner we may unambiguously add vectors of different lengths. 
Thus, we have defined addition among partitions and weak-compositions. 
Further, given a positive integer $i$, we shall let $e_i$ denote the $i$-th standard basis vector.
That is, the vector that has its $i$-th entry equal to $1$ and all remaining entries equal to $0$.

\

Given a partition $\lambda$, we can represent it via the diagram of left-justified rows of boxes whose $i$-th row contains $\lambda_i$ boxes. 
The diagrams of these type are called \textit{Ferrers diagrams}, or just \textit{diagrams} for short. 
We shall use the symbol $\lambda$ when refering to both the partition and its Ferrers diagram.

Whenever we find a diagram $D'$ contained in a diagram $D$ as a subset of boxes, we say that $D'$ is a \textit{subdiagram} of $D$.
Suppose partitions $\lambda=(\lambda_1,\lambda_2,\ldots, \lambda_j) \vdash n$ and $\mu=(\mu_1,\mu_2,\ldots, \mu_k) \vdash m$, with $m \leq n$, $k \leq j$, and $\mu_i \leq \lambda_i$ for each $i = 1, 2, \ldots, k$ are given. 
Then $\mu$ is a subdiagram of $\lambda$, and a particular copy of $\mu$ is found at the top-left corner of $\lambda$.  
We can form the \textit{skew \label{skew} diagram} $\lambda / \mu$ by removing that copy of $\mu$ from $\lambda$. 
Henceforth, when we say that $D$ is a \textit{diagram}, it is assumed that $D$ is either the Ferrers diagram of some partition $\lambda$, or $D$ is the skew diagram $\lambda / \mu$ for some partitions $\lambda, \mu$. 

\begin{example}
Here we consider the partitions $\lambda=(4,4,2)$ and $\mu=(3,1)$, and form the skew diagram $\lambda / \mu$.

\begin{center}
\setlength{\unitlength}{0.4mm}
\begin{picture}(100,40)(-30,-5)

   \put(-30,0){\line(1,0){20}}
   \put(-30,10){\line(1,0){40}}
   \put(-20,20){\line(1,0){30}}
   \put(0,30){\line(1,0){10}}

   \put(-30,0){\line(0,1){10}}
   \put(-20,0){\line(0,1){20}}
   \put(-10,0){\line(0,1){20}}
   \put(0,10){\line(0,1){20}}
   \put(10,30){\line(0,-1){20}}

\put(-30,10){\dashbox{2}(10,10){}}
\put(-30,20){\dashbox{2}(10,10){}}
\put(-30,20){\dashbox{2}(10,10){}}
\put(-20,20){\dashbox{2}(10,10){}}
\put(-10,20){\dashbox{2}(10,10){}}

   \put(30,0){\line(1,0){20}}
   \put(30,10){\line(1,0){40}}
   \put(40,20){\line(1,0){30}}
   \put(60,30){\line(1,0){10}}

   \put(30,0){\line(0,1){10}}
   \put(40,0){\line(0,1){20}}
   \put(50,0){\line(0,1){20}}
   \put(60,10){\line(0,1){20}}
   \put(70,30){\line(0,-1){20}}

\end{picture}
\end{center}

\end{example}

The number of boxes that appears in a given row or a given column of a diagram is called the \textit{length} of that row or column. 
The \textit{length} of a diagram $D$ is the number of rows of $D$ and the \textit{width} of a diagram $D$ is the number of columns of $D$. 
Given any diagram $D$, the $180^{\circ}$ rotation of a diagram diagram $D$ is denoted by $D^{\circ}$.

For two boxes $b_1$ and $b_2$ in a diagram $D$, we define a \textit{path} from $b_1$ to $b_2$ in $D$ to be a sequence of steps either up, down, left, or right that begins at $b_1$, ends at $b_2$, and at no time leaves the diagram $D$.
We say that a diagram $D$ is \textit{connected} if for any two boxes $b_1$ and $b_2$ of $D$ there is a path from $b_1$ to $b_2$ in $D$. If $D$ is not connected we say it is \textit{disconnected}.

Given two diagrams $D_1$ and $D_2$, if the last column of $D_1$ and first column of $D_2$ are both of length $\geq i$, then the \textit{near-concatenation of depth $i$ of $D_1$ and $D_2$}, denoted $D_1 \odot_i D_2$, is the is the skew diagram obtained by placing $D_1$ and $D_2$ so that the top-right box of $D_1$ is one step left and $i-1$ steps up from the bottom-left box of $D_2$. 
 
\begin{example} Let $D_1$ be the skew diagram $(2,2,2) / (1,1)$ and $D_2$ be the Ferrers diagram $(4,4,2)$. 

\setlength{\unitlength}{0.5mm}

\begin{picture}(120,50)(-15,-5)

\put(65,0){$D_1$}
\put(115,0){$D_2$}

\put(100,30){\framebox(10,10)[tl]{ }}
\put(110,30){\framebox(10,10)[tl]{ }}
\put(120,30){\framebox(10,10)[tl]{ }}
\put(130,30){\framebox(10,10)[tl]{ }}

\put(100,20){\framebox(10,10)[tl]{ }}
\put(110,20){\framebox(10,10)[tl]{ }}
\put(120,20){\framebox(10,10)[tl]{ }}
\put(130,20){\framebox(10,10)[tl]{ }}

\put(100,10){\framebox(10,10)[tl]{ }}
\put(110,10){\framebox(10,10)[tl]{ }}

\put(70,30){\framebox(10,10)[tl]{ }}
\put(70,20){\framebox(10,10)[tl]{ }}
\put(70,10){\framebox(10,10)[tl]{ }}
\put(60,10){\framebox(10,10)[tl]{ }}

\end{picture}

Here we show the diagrams $D_1 \odot_1 D_2$, $D_1 \odot_2 D_2$, and $D_1 \odot_3 D_2$.

\

\setlength{\unitlength}{0.5mm}

\begin{picture}(120,50)(30,-5)

\put(50,-30){$D_1 \odot D_2$}

\put(120,-30){$D_1 \odot_2 D_2$}

\put(190,-30){$D_1 \odot_3 D_2$}

\put(60,20){\framebox(10,10)[tl]{ }}
\put(70,20){\framebox(10,10)[tl]{ }}
\put(80,20){\framebox(10,10)[tl]{ }}
\put(90,20){\framebox(10,10)[tl]{ }}

\put(60,10){\framebox(10,10)[tl]{ }}
\put(70,10){\framebox(10,10)[tl]{ }}
\put(80,10){\framebox(10,10)[tl]{ }}
\put(90,10){\framebox(10,10)[tl]{ }}

\put(60,0){\framebox(10,10)[tl]{ }}
\put(70,0){\framebox(10,10)[tl]{ }}

\put(50,0){\framebox(10,10)[tl]{ }}
\put(50,-10){\framebox(10,10)[tl]{ }}
\put(50,-20){\framebox(10,10)[tl]{ }}
\put(40,-20){\framebox(10,10)[tl]{ }}

\put(130,10){\framebox(10,10)[tl]{ }}
\put(140,10){\framebox(10,10)[tl]{ }}
\put(150,10){\framebox(10,10)[tl]{ }}
\put(160,10){\framebox(10,10)[tl]{ }}

\put(130,0){\framebox(10,10)[tl]{ }}
\put(140,0){\framebox(10,10)[tl]{ }}
\put(150,0){\framebox(10,10)[tl]{ }}
\put(160,0){\framebox(10,10)[tl]{ }}

\put(130,-10){\framebox(10,10)[tl]{ }}
\put(140,-10){\framebox(10,10)[tl]{ }}

\put(120,0){\framebox(10,10)[tl]{ }}
\put(120,-10){\framebox(10,10)[tl]{ }}
\put(120,-20){\framebox(10,10)[tl]{ }}
\put(110,-20){\framebox(10,10)[tl]{ }}

\put(200,0){\framebox(10,10)[tl]{ }}
\put(210,0){\framebox(10,10)[tl]{ }}
\put(220,0){\framebox(10,10)[tl]{ }}
\put(230,0){\framebox(10,10)[tl]{ }}

\put(200,-10){\framebox(10,10)[tl]{ }}
\put(210,-10){\framebox(10,10)[tl]{ }}
\put(220,-10){\framebox(10,10)[tl]{ }}
\put(230,-10){\framebox(10,10)[tl]{ }}

\put(200,-20){\framebox(10,10)[tl]{ }}
\put(210,-20){\framebox(10,10)[tl]{ }}

\put(190,0){\framebox(10,10)[tl]{ }}
\put(190,-10){\framebox(10,10)[tl]{ }}
\put(190,-20){\framebox(10,10)[tl]{ }}
\put(180,-20){\framebox(10,10)[tl]{ }}

\end{picture}

\end{example}

\

\

\

Given diagrams $D_1$ and $D_2$, we define their \textit{direct sum} to be the skew diagram $D= D_1 \oplus D_2$ that consists of the subdiagrams $D_1$ and $D_2$ such that the top-right box of $D_1$ is one step left and one step down from the bottom-left box of $D_{2}$. 
We note that we may write $\oplus = \odot_0$.

\begin{example} With $D_1 = (2,2,2) / (1,1)$ and $D_2 = (4,4,2)$, as in the previous example, we now show the diagram $D_1 \oplus D_2$. 

\setlength{\unitlength}{0.4mm}

\begin{picture}(120,50)(-35,-5)

\put(32,10){$D_1 \oplus D_2$}

\put(100,30){\framebox(10,10)[tl]{ }}
\put(110,30){\framebox(10,10)[tl]{ }}
\put(120,30){\framebox(10,10)[tl]{ }}
\put(130,30){\framebox(10,10)[tl]{ }}

\put(100,20){\framebox(10,10)[tl]{ }}
\put(110,20){\framebox(10,10)[tl]{ }}
\put(120,20){\framebox(10,10)[tl]{ }}
\put(130,20){\framebox(10,10)[tl]{ }}

\put(100,10){\framebox(10,10)[tl]{ }}
\put(110,10){\framebox(10,10)[tl]{ }}

\put(90,0){\framebox(10,10)[tl]{ }}
\put(90,-10){\framebox(10,10)[tl]{ }}
\put(90,-20){\framebox(10,10)[tl]{ }}
\put(80,-20){\framebox(10,10)[tl]{ }}

\end{picture}

\end{example}

\

If $D$ is a diagram, then a \textit{tableau}---plural \textit{tableaux}---$\mathcal{T}$ \textit{of shape $D$} is the array obtained by filling the boxes of the $D$ with the positive integers, where repetition is allowed. 
A tableau is said to be a \textit{semistandard Young tableau} (or simply \textit{semistandard}, for short) if each row gives a weakly increasing sequence of integers and each columns gives a strictly increasing sequence of integers.
We will often abbreviate ``semistandard Young tableau" by SSYT and ``semistandard Young tableaux" by SSYTx. 

When we wish to depict a certain tableau we will either show the underlying diagram with the entries residing in the boxes of the diagram or we may simply replace the boxes with the entries, so that the entries themselves depict the underlying shape of the tableau.

The \textit{content} of a tableau $\mathcal{T}$ is the weak composition given by
\[ \nu(\mathcal{T}) = ( \# \textrm{1's in }\mathcal{T}, \# \textrm{2's in }\mathcal{T}, \ldots).\]

Now, given a tableau $\mathcal{T}$ of shape $D$, we may wish to focus on the entries of $\mathcal{T}$ that lie in some subdiagram $D'$ of $D$. 
In this way we obtain a \textit{subtableau} of shape $D'$.
Similarly, given a tableau $\mathcal{T}'$ of shape $D'$ and a diagram $D$ containing $D'$, we may sometimes wish to fill the copy of $D'$ within $D$ as in the tableau $\mathcal{T}'$. 

\

Given a skew diagram $D$, the \textit{skew Schur function corresponding to $D$} is defined to be \label{slambda}
\begin{equation}
\label{schurdef}
 s_{D}(\textbf{x}) = \sum_{\mathcal{T}} {x_1}^{\# \textrm{1's in } \mathcal{T}}{x_2}^{\# \textrm{2's in } \mathcal{T}} \cdots,
\end{equation}
where the sum is taken over all semistandard Young tableaux $\mathcal{T}$ of shape $D$. 
When $D=\lambda$ is a partition, $s_\lambda$ is called the \textit{Schur function corresponding to $\lambda$}.

The set $\{s_\lambda | \lambda \vdash n \}$ is a basis of $\Lambda^n$, the set of homogeneous symmetric functions of degree $n$. 
Thus any homogeneous symmetric function of degree $n$ can be written as a unique linear combination of the $s_\lambda$. 
Therefore for each $f \in \Lambda^n$ we can write $f=\sum_{\lambda} a_\lambda s_\lambda$ for appropriate coefficients. 
We use the operator $[s_\lambda]$ to extract the coefficient of $s_\lambda$. That is, if $f=\sum_{\lambda} a_\lambda s_\lambda$ then $[s_\lambda] (f) = a_\lambda$.

As was mentioned in the introduction, for any partitions $\mu$ and $\nu$ we have
\begin{equation}
\label{smusnu}
 s_\mu s_\nu = \sum_{\lambda \vdash n} c_{\mu \nu}^{\lambda} s_\lambda, 
\end{equation}
and for any skew diagram $\lambda / \mu$ we have
\begin{equation}
\label{slambdaskewmu}
 s_{\lambda / \mu} = \sum_{\nu \vdash n} c_{\mu \nu}^{\lambda} s_\nu 
\end{equation}
where the $c_{\mu \nu}^{\lambda}$ are the \textit{Littlewood-Richardson coefficients}.
It turns out that the Littlewood-Richardson coefficients are non-negative integers and they count an interesting class of SSYT that we shall now describe.

Given a tableau $\mathcal{T}$, the \textit{reading word} of $\mathcal{T}$ is the sequence of integers obtained by reading the entries of the rows of $\mathcal{T}$ from right to left, proceeding from the top row to the bottom. 
We say that a sequence $r=r_1,r_2,\ldots, r_k$ is \textit{lattice} if, for each $j$, when reading the sequence from left to right the number of $j$'s that we have read is never less than the number of $j+1$'s that we have read.

\begin{theorem} [Littlewood-Richardson Rule] (\cite{LR})
\label{lr}

For partitions $\lambda, \mu$, and $\nu$, the \textit{Littlewood-Richardson coefficient} $c_{\mu \nu}^{\lambda}$ is the number of SSYTx of shape $\lambda / \mu$, content $\nu$, with lattice reading word.
\end{theorem}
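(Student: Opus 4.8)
The plan is to prove the rule through Schützenberger's \emph{jeu de taquin}. For a skew diagram $\lambda/\mu$, call a box of $\mu$ an \emph{inner corner} if neither the box directly below it nor the box directly to its right lies in $\mu$; a \emph{slide} into such a box repeatedly moves into the current vacancy the smaller of its right and lower neighbours, taking the lower one when they are equal, until the vacancy has been pushed out of the diagram. A slide again yields a semistandard tableau, now on a smaller skew shape, and performing slides until no inner corner remains produces an SSYT of straight (partition) shape, the \emph{rectification} $\mathrm{rect}(T)$ of $T$. The first fact I would establish is the classical confluence theorem: $\mathrm{rect}(T)$ does not depend on the order in which the slides are carried out. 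This follows once one checks that a slide changes the reading word of a tableau only within its Knuth equivalence class, and that every Knuth equivalence class contains exactly one reading word of a straight-shape SSYT; I would set up Knuth equivalence and prove these two statements before anything else.

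Next I would record two observations. First, for each $\nu$ the \emph{only} straight-shape SSYT whose reading word is lattice is the tableau $Y_\nu$ whose $i$-th row consists entirely of $i$'s: in the reading word of a straight-shape SSYT the first letter is the rightmost, hence largest, entry of row $1$, which the lattice condition forces to equal $1$, so all of row $1$ equals $1$; applying the same argument to rows $2, 3, \dots$ in turn finishes it. Second, a slide leaves the content of a tableau unchanged and --- this is the point that needs care --- leaves unchanged whether or not the reading word is lattice. Granting this, a skew SSYT $T$ of content $\nu$ has lattice reading word if and only if $\mathrm{rect}(T)$ does, that is, if and only if $\mathrm{rect}(T)=Y_\nu$.

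With these inputs in hand the theorem becomes a bookkeeping computation. Write $f_{\lambda/\mu}^{\nu}$ for the number of SSYTx of shape $\lambda/\mu$, content $\nu$, with lattice reading word; by the previous paragraph this equals the number of SSYTx $T$ of shape $\lambda/\mu$ with $\mathrm{rect}(T)=Y_\nu$. The crucial structural fact, which I would prove by tableau switching (equivalently, by running slides in reverse), is that the number of SSYTx of shape $\lambda/\mu$ rectifying to a fixed straight-shape SSYT $U$ depends only on the shape $\mathrm{sh}(U)$ of $U$; taking $U=Y_\nu$ identifies this common value with $f_{\lambda/\mu}^{\nu}$. Since, by (\ref{schurdef}), $s_{\lambda/\mu}=\sum_{T} x^{\nu(T)}$ with $T$ ranging over SSYTx of shape $\lambda/\mu$, and since rectification preserves content, grouping the tableaux according to the value of $\mathrm{rect}(T)$ gives
\[ s_{\lambda/\mu} \;=\; \sum_{\nu}\ \sum_{\mathrm{sh}(U)=\nu}\ \sum_{\mathrm{rect}(T)=U} x^{\nu(T)} \;=\; \sum_{\nu} f_{\lambda/\mu}^{\nu} \sum_{\mathrm{sh}(U)=\nu} x^{\nu(U)} \;=\; \sum_{\nu} f_{\lambda/\mu}^{\nu}\, s_{\nu}, \]
where the last equality is (\ref{schurdef}) applied to $s_\nu$. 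Comparing with $s_{\lambda/\mu}=\sum_\nu c_{\mu\nu}^{\lambda}s_\nu$ from (\ref{slambdaskewmu}) and using that the Schur functions are linearly independent yields $c_{\mu\nu}^{\lambda}=f_{\lambda/\mu}^{\nu}$, which is the assertion.

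I expect the main obstacle to be exactly the jeu-de-taquin theory feeding this argument: the confluence of rectification, the invariance of the lattice condition under slides, and above all the statement that the number of skew tableaux rectifying to $U$ sees nothing of $U$ beyond its shape. Once these are in place, the rest is the formal manipulation above.
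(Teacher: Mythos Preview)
The paper does not actually prove this theorem: it is stated with a citation to \cite{LR} and used as a black box throughout, so there is no ``paper's own proof'' to compare against. Your sketch via jeu de taquin is one of the standard modern routes to the Littlewood--Richardson rule and is correct in outline; you have also correctly flagged the genuinely hard ingredients (confluence of rectification, invariance of the lattice property under slides, and the shape-only dependence of the rectification fibres). Since the paper treats the rule as a quoted result rather than something to be established, no proof was expected here.
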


For any $f = \sum_{\lambda \vdash n} a_{\lambda} s_{\lambda} \in \Lambda^n$, we say that $f$ is \textit{Schur-positive}, and write $f \geq_s 0$, if each $a_{\lambda} \geq 0$. 
Therefore, the Littlewood-Richardson rule shows that both $s_{\mu} s_{\nu}$ and $s_{\lambda / \mu}$ are Schur-positive. 
For $f,g \in \Lambda^n$, we will be interested in whether or not the difference $f-g$ is Schur positive. 
We shall write $f \geq_s g$ whenever $f-g$ is Schur-positive. 

If we consider the relation $\succeq_s$ on the set of all Schur-equivalent classes of diagrams (i.e. $[D]_s= \{D' | s_{D}=s_{D'}\}$), then $\succeq_s$ defines a partial ordering.
This allows us to view the Hasse diagram for the relation $\succeq_s$ on the set of these Schur-equivalent classes.
Some work in determining these equivalence classes includes \cite{{btvw},{g3},{mvw1},{rsvw}}.

\

We close these preliminaries by mentioning several useful results regarding skew Schur functions.
\begin{theorem}{(\cite{stanley}, Exercise 7.56(a))}
\label{rotate}
Given a skew diagram $D$, 
\begin{equation} s_D= s_{D^{\circ}}. 
\end{equation}
\end{theorem}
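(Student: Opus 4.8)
The plan is to establish, for each positive integer $m$, a content-reversing bijection between the semistandard Young tableaux of shape $D$ all of whose entries are at most $m$ and those of shape $D^{\circ}$ of the same kind, and then to read off the claimed identity of symmetric functions.

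First I would fix $m \geq 1$ and define a map $\phi_m$ on SSYTx of shape $D$ with entries in $\{1, 2, \ldots, m\}$: rotate such a tableau $\mathcal{T}$ by $180^{\circ}$, and then replace every entry $i$ by $m + 1 - i$. Since $D^{\circ}$ is again a skew diagram, $\phi_m(\mathcal{T})$ is a filling of $D^{\circ}$. The crux is checking that $\phi_m(\mathcal{T})$ is semistandard. Reading a row of $\phi_m(\mathcal{T})$ from left to right amounts to reading the corresponding row of $\mathcal{T}$ from right to left and then complementing each entry, so a weakly increasing row of $\mathcal{T}$ produces a weakly increasing row of $\phi_m(\mathcal{T})$; similarly, reading a column of $\phi_m(\mathcal{T})$ from top to bottom amounts to reading a column of $\mathcal{T}$ from bottom to top and then complementing, so a strictly increasing column stays strictly increasing. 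The same recipe applied to shape $D^{\circ}$ inverts $\phi_m$, so $\phi_m$ is a bijection; and since complementation sends the value $i$ to $m + 1 - i$, the content is reversed: $\nu_j(\phi_m(\mathcal{T})) = \nu_{m+1-j}(\mathcal{T})$ for all $j$.

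Next I would pass to generating functions. Restricting the sum in (\ref{schurdef}) to tableaux with entries in $\{1, \ldots, m\}$ and applying the bijection $\phi_m$ gives
\[ s_D(x_1, \ldots, x_m, 0, 0, \ldots) = \sum_{\mathcal{S}} x_1^{\nu_m(\mathcal{S})} x_2^{\nu_{m-1}(\mathcal{S})} \cdots x_m^{\nu_1(\mathcal{S})}, \]
where $\mathcal{S}$ ranges over the SSYTx of shape $D^{\circ}$ with entries in $\{1, \ldots, m\}$. Re-indexing the variables via $x_i \mapsto x_{m+1-i}$ identifies the right-hand side with $s_{D^{\circ}}(x_m, x_{m-1}, \ldots, x_1, 0, 0, \ldots)$, which equals $s_{D^{\circ}}(x_1, \ldots, x_m, 0, 0, \ldots)$ because $s_{D^{\circ}}$ is a symmetric function. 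Since this holds for every $m$ and a symmetric function is determined by its restrictions to finitely many variables, $s_D = s_{D^{\circ}}$.

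The only step requiring real care is the verification that $\phi_m$ preserves semistandardness --- specifically, tracking the fact that the $180^{\circ}$ rotation reverses the order of entries along every row and every column, while the complementation $i \mapsto m+1-i$ restores weak (resp.\ strict) monotonicity in rows (resp.\ columns). I do not anticipate any genuine difficulty beyond this bookkeeping.
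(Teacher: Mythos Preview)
Your argument is correct and is in fact the standard proof of this identity. Note, however, that the paper does not supply its own proof of this statement: it is quoted as a known result (Exercise~7.56(a) in Stanley) and used as a black box, so there is no proof in the paper to compare against. Your rotate-and-complement bijection on SSYTx with entries bounded by $m$, followed by the observation that symmetry in the variables lets one undo the reversal of content, is exactly the intended solution to that exercise.
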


\begin{theorem}
\label{disjprod}
The Schur function of any disconnected skew diagram is reducible. If $D = D_1 \oplus D_2$, then we have 
\begin{equation}
s_{D}=s_{D_1} s_{D_2}.
\end{equation}
\end{theorem}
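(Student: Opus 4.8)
The plan is to prove the product formula $s_D = s_{D_1}s_{D_2}$ directly from the combinatorial definition (\ref{schurdef}) of the skew Schur function, by exhibiting a content-preserving bijection between the semistandard fillings of $D = D_1 \oplus D_2$ and the pairs consisting of a semistandard filling of $D_1$ and one of $D_2$; the reducibility assertion will then follow once we observe that every disconnected skew diagram can be written as such a direct sum with both summands nonempty.

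First I would record the geometric content of the operation $\oplus$. Since the bottom-left box of $D_2$ sits one step up and one step to the right of the top-right box of $D_1$, every box of $D_2$ lies in a row strictly above, and in a column strictly to the right of, every box of $D_1$. In particular no row of $D$ meets both $D_1$ and $D_2$, and no column of $D$ meets both: each row of $D$ is a full row of $D_1$ or a full row of $D_2$, and similarly for columns. Now, given an SSYT $\mathcal{T}$ of shape $D$, let $\mathcal{T}_1$ and $\mathcal{T}_2$ be its subtableaux of shapes $D_1$ and $D_2$. By the previous observation, requiring the rows of $\mathcal{T}$ to weakly increase and its columns to strictly increase is equivalent to requiring the same of $\mathcal{T}_1$ and, separately, of $\mathcal{T}_2$ — there is no cell of $D_1$ and no cell of $D_2$ sharing a row or a column, hence no further constraint linking the two pieces. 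Thus $\mathcal{T} \mapsto (\mathcal{T}_1,\mathcal{T}_2)$ is a bijection from the SSYTx of shape $D$ onto the pairs $(\mathcal{T}_1,\mathcal{T}_2)$ with $\mathcal{T}_i$ an SSYT of shape $D_i$, the inverse being given by gluing. Since $\nu(\mathcal{T}) = \nu(\mathcal{T}_1) + \nu(\mathcal{T}_2)$, each monomial factors, and summing in (\ref{schurdef}) gives
\[ s_D(\textbf{x}) = \sum_{\mathcal{T}} \textbf{x}^{\nu(\mathcal{T})} = \left( \sum_{\mathcal{T}_1} \textbf{x}^{\nu(\mathcal{T}_1)} \right)\left( \sum_{\mathcal{T}_2} \textbf{x}^{\nu(\mathcal{T}_2)} \right) = s_{D_1}(\textbf{x})\, s_{D_2}(\textbf{x}). \]

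For the reducibility claim, I would use the fact that the connected components of a skew diagram $\lambda/\mu$ are arranged in a strict staircase: comparing two consecutive nonempty rows that lie in different components, the fact that $\lambda$ and $\mu$ are partitions forces the lower row to end strictly to the left of where the upper row begins. Iterating, the topmost component $C$ lies strictly above and strictly to the right of the union $D'$ of the remaining components, so, after a translation (which changes neither the set of SSYTx nor their contents, hence not $s$), we have $D = D' \oplus C$ with $D'$ and $C$ both nonempty. The product formula then exhibits $s_D$ as a product $s_{D'} s_C$ of two homogeneous symmetric functions, each of degree strictly between $0$ and $|D|$, so $s_D$ is reducible.

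The main obstacle is not a computation but the need to make the ``no interaction'' step of the bijection fully airtight — that is, to verify carefully that the direct-sum geometry really does leave the semistandardness condition on $D$ decoupled into independent conditions on $D_1$ and $D_2$ (this is exactly where $\oplus = \odot_0$ behaves differently from $\odot_i$ with $i \ge 1$, where the two pieces share a column) — together with the small structural lemma about the staircase arrangement of components that is needed to reduce an arbitrary disconnected skew diagram to a direct sum.
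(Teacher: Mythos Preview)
Your proposal is correct and follows essentially the same approach as the paper: both arguments establish the product formula via the content-preserving bijection $\mathcal{T}\mapsto(\mathcal{T}_1,\mathcal{T}_2)$ between SSYTx of shape $D_1\oplus D_2$ and pairs of SSYTx of the summands. Your write-up is more thorough than the paper's two-sentence proof, in particular supplying the geometric verification that no row or column meets both pieces and the reduction of an arbitrary disconnected skew shape to a nontrivial direct sum, points the paper leaves implicit.
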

\begin{proof} Any semistandard Young tableau of shape $D_1 \oplus D_2$ gives rise to semistandard Young tableaux of shape $D_1$ and $D_2$ by taking the obvious subtableaux. 
Conversely, any semistandard Young tableaux $\mathcal{T}_1$ of shape $D_1$ and $\mathcal{T}_2$ of shape $D_2$ give rise to the tableau $\mathcal{T}_1 \oplus \mathcal{T}_2$ of shape $D_1 \oplus D_2$, which is clearly semistandard. \qed
\end{proof}

Given a partition $\rho$ contained in the $a \times b$ rectangle $(b^a)$ we define the \textit{complementary partition $\rho^c$ in the rectangle $(b^a)$} by  $\rho^c = ((b^a) / \rho)^\circ$. 
That is, $\rho^c$ is the complement of $\rho$ in $(b^a)$ rotated by $180^\circ$. 
It is easy to see that this definition does define a partition. 
We display the relevant diagrams below for clarity.

\setlength{\unitlength}{0.6mm}

\begin{picture}(100,60)(150,-25)

\put(170,5){$\rho$}
\put(193,-15){$(\rho^c)^\circ $}

\put(160,20){\line(1,0){60}}

\put(160,-10){\line(1,0){20}}
\put(180,0){\line(1,0){10}}
\put(190,10){\line(1,0){20}}

\put(160,-10){\line(0,1){30}}
\put(180,-10){\line(0,1){10}}
\put(190,0){\line(0,1){10}}
\put(210,10){\line(0,1){10}}

\put(160,-30){\dashbox{1}(60,50)[tl]{ }}

\put(285,0){$\rho^c$}
\put(312,-21){$\rho^\circ $}

\put(270,20){\line(1,0){60}}

\put(270,-30){\line(1,0){10}}
\put(280,-20){\line(1,0){20}}
\put(300,-10){\line(1,0){10}}
\put(310,0){\line(1,0){20}}

\put(270,-30){\line(0,1){50}}
\put(280,-30){\line(0,1){10}}
\put(300,-20){\line(0,1){10}}
\put(310,-10){\line(0,1){10}}
\put(330,0){\line(0,1){20}}

\put(270,-30){\dashbox{1}(60,50)[tl]{ }}

\end{picture}

\

\

For what follows, we shall make use of the following fact.

\begin{theorem} (\cite{complementcite})
\label{firstcut}
Let $\rho$ be a partition contained in the $a \times b$ rectangle $(b^a)$, $\kappa \subset \rho$ be a second partition. Then the skew diagram $\rho / \kappa$ satisfies 
\[ s_{\rho / \kappa} = \sum_{\nu \subseteq (b^a)} c_{\kappa \rho^c}^{\nu} s_{\nu^c}, \] where $c_{\kappa \rho^c}^{\nu}$ are the Littlewood-Richardson coefficients.
\end{theorem}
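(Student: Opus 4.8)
The plan is to deduce this from the Littlewood-Richardson expansion (\ref{slambdaskewmu}) together with a complementation symmetry of the Littlewood-Richardson coefficients inside the rectangle. First I would apply (\ref{slambdaskewmu}) to write $s_{\rho/\kappa} = \sum_{\sigma} c_{\kappa\sigma}^{\rho} s_{\sigma}$, where the sum is over partitions $\sigma$ with $|\sigma| = |\rho| - |\kappa|$. Since $c_{\kappa\sigma}^{\rho} = 0$ unless $\sigma \subseteq \rho \subseteq (b^a)$, we lose nothing by restricting the sum to partitions $\sigma \subseteq (b^a)$. Because $\tau \mapsto \tau^c$ is an involution on the set of partitions contained in $(b^a)$, I substitute $\sigma = \nu^c$ and reindex by $\nu \subseteq (b^a)$, which gives $s_{\rho/\kappa} = \sum_{\nu \subseteq (b^a)} c_{\kappa,\nu^c}^{\rho} s_{\nu^c}$. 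Thus the theorem reduces to the identity $c_{\kappa,\nu^c}^{\rho} = c_{\kappa,\rho^c}^{\nu}$ for all partitions $\kappa, \nu, \rho \subseteq (b^a)$.

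For that key identity, the idea is to express each side as one ``triple'' coefficient. Using (\ref{slambdaskewmu}) and Theorem \ref{rotate}, we have $s_{(b^a)/\gamma} = s_{\gamma^c}$, so comparing coefficients of Schur functions yields $c_{\gamma\mu}^{(b^a)} = \delta_{\mu,\gamma^c}$ for any $\gamma, \mu \subseteq (b^a)$. Now for arbitrary $\alpha, \beta, \gamma \subseteq (b^a)$, expanding the product $s_\alpha s_\beta s_\gamma$ by associativity and extracting the coefficient of $s_{(b^a)}$ gives
\[
 [s_{(b^a)}]\bigl(s_\alpha s_\beta s_\gamma\bigr) = \sum_{\mu} c_{\alpha\beta}^{\mu}\, c_{\mu\gamma}^{(b^a)} = c_{\alpha\beta}^{\gamma^c}.
\]
Since multiplication of symmetric functions is commutative, the left-hand side is symmetric in $\alpha, \beta, \gamma$, and hence $c_{\alpha\beta}^{\gamma^c} = c_{\alpha\gamma}^{\beta^c}$. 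Specializing $\alpha = \kappa$, $\beta = \nu^c$, $\gamma = \rho^c$, and using that complementation is an involution (so $(\rho^c)^c = \rho$ and $(\nu^c)^c = \nu$), we obtain $c_{\kappa,\nu^c}^{\rho} = c_{\kappa,\rho^c}^{\nu}$. Substituting this into the reindexed sum above completes the proof.

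I expect the only genuine subtlety to be bookkeeping rather than a deep obstacle: one must check that every partition appearing ($\kappa$, $\sigma$, $\nu$, and their complements) really does lie inside $(b^a)$, so that rectangular complementation is defined and involutive, and one must confirm that extending the final sum to all $\nu \subseteq (b^a)$ only introduces terms with $c_{\kappa,\rho^c}^{\nu} = 0$ (which follows because the displayed identity $c_{\kappa,\nu^c}^{\rho} = c_{\kappa,\rho^c}^{\nu}$ is an honest equality valid for all such $\nu$). An alternative, more combinatorial route would be to build a direct bijection between the lattice SSYTx counted by $c_{\kappa,\nu^c}^{\rho}$ and those counted by $c_{\kappa,\rho^c}^{\nu}$ via rotation/complementation of the underlying skew shapes inside the rectangle, but the symmetric-function argument above sidesteps that case analysis.
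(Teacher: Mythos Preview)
Your argument is correct. The reduction to the complementation symmetry $c_{\kappa,\nu^c}^{\rho} = c_{\kappa,\rho^c}^{\nu}$ is clean, and your proof of that identity via the triple product $[s_{(b^a)}](s_\alpha s_\beta s_\gamma)$ is standard and valid; the only point worth making explicit is that $c_{\mu\gamma}^{(b^a)}=0$ whenever $\mu \not\subseteq (b^a)$, so the sum over $\mu$ really does collapse to the single term $\mu=\gamma^c$.

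However, there is nothing to compare against: the paper does not prove this theorem. It is quoted, with citation, from \cite{complementcite} and used as a black box (the only nearby argument is the two-line derivation of Corollary~\ref{rectcor}, which \emph{assumes} Theorem~\ref{firstcut}). So your write-up supplies a self-contained proof where the paper simply imports the result. If you want it to match the spirit of the cited source, the complementation identity you establish is exactly the ``rectangle duality'' of Littlewood--Richardson coefficients that underlies the result there; your symmetric-function derivation is arguably the slickest way to see it, and avoids the tableau bijection you mention as an alternative.
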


Given a symmetric function $f=\sum_{\nu } a_{\nu} s_{\nu}$ we define the \textit{truncated complement of $f$ in the rectangle $(b^a)$} as 
\begin{equation}
\label{cf}
c(f) = \sum_{\nu \subseteq (b^a)} a_{\nu} s_{\nu^c}.
\end{equation}
The rectangle being used should be clear from the context if it is not specifically mentioned.

We note that one effect of the restriction $\nu \subseteq (b^a)$ is that in passing from $f$ to $c(f)$, we are reducing computations to $a$ variables. 
This follows since any term $s_{\nu}$ from $f$ with $l(\nu)>a$ is effectively being set to zero, which is precisely what happens when reducing $f$ to $a$ variables.

\

We may now restate Theorem~\ref{firstcut} as follows.

\begin{corollary}
\label{rectcor}
Let $\rho$ be a partition contained in the $a \times b$ rectangle $(b^a)$, and $\kappa \subset \rho$ be a second partition. Then the skew diagram $\rho / \kappa$ satisfies 
\[ s_{\rho / \kappa} = c(s_\kappa s_{\rho^c}). \]
\end{corollary}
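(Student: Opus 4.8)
The plan is to show that Corollary~\ref{rectcor} is merely a notational repackaging of Theorem~\ref{firstcut}. First I would expand the right-hand side using the definition of the truncated complement in \eqref{cf}. To do that, I need to know the Schur expansion of the product $s_\kappa s_{\rho^c}$; by \eqref{smusnu} this is $s_\kappa s_{\rho^c} = \sum_{\nu} c_{\kappa\,\rho^c}^{\nu}\, s_\nu$, where $\nu$ ranges over all partitions of $|\kappa| + |\rho^c|$. Applying $c(\cdot)$ to this expansion, and using that $c$ is linear, yields $c(s_\kappa s_{\rho^c}) = \sum_{\nu \subseteq (b^a)} c_{\kappa\,\rho^c}^{\nu}\, s_{\nu^c}$, since the operator $c$ retains only those terms indexed by partitions fitting inside the rectangle $(b^a)$ and replaces each such $s_\nu$ by $s_{\nu^c}$.

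Next I would simply observe that the resulting sum $\sum_{\nu \subseteq (b^a)} c_{\kappa\,\rho^c}^{\nu}\, s_{\nu^c}$ is literally the right-hand side of the identity in Theorem~\ref{firstcut}. Hence $c(s_\kappa s_{\rho^c}) = s_{\rho/\kappa}$, which is exactly the claim. One small point worth addressing explicitly is that the sum in Theorem~\ref{firstcut} is already restricted to $\nu \subseteq (b^a)$, so no term is lost or gained in the identification; the degree bookkeeping ($|\nu| = |\kappa| + |\rho^c| = |\kappa| + ab - |\rho|$) is automatically consistent on both sides.

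There is essentially no obstacle here: the only thing to verify carefully is that the definition \eqref{cf} of $c(f)$ was designed precisely so that it commutes with the passage from the product expansion to the complemented-Schur expansion appearing in Theorem~\ref{firstcut}. I would therefore keep the proof to a couple of lines, writing out the chain
\[
c(s_\kappa s_{\rho^c}) = c\!\left( \sum_{\nu} c_{\kappa\,\rho^c}^{\nu} s_\nu \right) = \sum_{\nu \subseteq (b^a)} c_{\kappa\,\rho^c}^{\nu} s_{\nu^c} = s_{\rho/\kappa},
\]
where the first equality is \eqref{smusnu}, the second is the definition \eqref{cf} together with linearity of $c$, and the third is Theorem~\ref{firstcut}. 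This completes the proof.

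\begin{proof}
By \eqref{smusnu} we may write $s_\kappa s_{\rho^c} = \sum_{\nu} c_{\kappa\,\rho^c}^{\nu} s_\nu$, the sum being over all partitions $\nu$ of $|\kappa| + |\rho^c|$. Applying the definition \eqref{cf} of the truncated complement in $(b^a)$, together with its linearity, gives
\[
c(s_\kappa s_{\rho^c}) = \sum_{\nu \subseteq (b^a)} c_{\kappa\,\rho^c}^{\nu} s_{\nu^c}.
\]
By Theorem~\ref{firstcut}, the right-hand side equals $s_{\rho/\kappa}$, and the corollary follows. \qed
\end{proof}
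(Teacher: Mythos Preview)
Your proof is correct and follows essentially the same approach as the paper: expand $s_\kappa s_{\rho^c}$ via the Littlewood--Richardson coefficients, apply the definition of $c(\cdot)$, and identify the result with Theorem~\ref{firstcut}. The paper's argument is line-for-line the chain you wrote.
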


\begin{proof}
From the definition of the Littlewood-Richardson numbers, we have
\[ s_{\kappa} s_{\rho^c} =  \sum_{\nu} c_{\kappa \rho^c}^{\nu} s_{\nu}.\]
Hence \[c(s_{\kappa} s_{\rho^c}) = c(\sum_{\nu} c_{\kappa \rho^c}^{\nu} s_{\nu}) = \sum_{\nu \subseteq (b^a)} c_{\kappa \rho^c}^{\nu} s_{\nu^c}. \]
By Theorem~\ref{firstcut}, this is just $s_{\rho / \kappa}$, so we are done.\qed
\end{proof}

One must be careful in working with the expression $c(s_\kappa s_{\rho^c})$. 
We must truncate any term $s_{\nu}$ we obtain in from the product with $\nu \not\subseteq (b^a)$. 
Thus we must be mindful of the dimensions of the rectangle we are working in.

\section{Staircases and Fat Staircases with Bad Foundations}

A Ferrers diagram is a \textit{staircase} if it is the Ferrers diagram of a partition of the form $\lambda = (n,n-1,n-2,\ldots, 2,1)$ or if it is the $180^{\circ}$ rotation of such a diagram. 
Both these diagrams are referred to as \textit{staircases of length $n$} and will be denoted by $\delta_n$ and $\Delta_n$ respectively.

\begin{example} Here we see the two staircases of length 5.

\

\

\

\setlength{\unitlength}{0.5mm}

\begin{picture}(100,50)(-30,-5)

\put(-10,50){\framebox(10,10)[tl]{ }}
\put(0,50){\framebox(10,10)[tl]{ }}
\put(10,50){\framebox(10,10)[tl]{ }}
\put(20,50){\framebox(10,10)[tl]{ }}
\put(30,50){\framebox(10,10)[tl]{ }}
     
\put(-10,40){\framebox(10,10)[tl]{ }}
\put(0,40){\framebox(10,10)[tl]{ }}
\put(10,40){\framebox(10,10)[tl]{ }}
\put(20,40){\framebox(10,10)[tl]{ }}
 
\put(-10,30){\framebox(10,10)[tl]{ }}
\put(0,30){\framebox(10,10)[tl]{ }}
\put(10,30){\framebox(10,10)[tl]{ }}

\put(-10,20){\framebox(10,10)[tl]{ }}
\put(0,20){\framebox(10,10)[tl]{ }}

\put(-10,10){\framebox(10,10)[tl]{ }}

\put(20,0){$\delta_5$}

\put(100,0){$\Delta_5$}

\put(120,50){\framebox(10,10)[tl]{ }}
\put(120,40){\framebox(10,10)[tl]{ }}
\put(120,30){\framebox(10,10)[tl]{ }}
\put(120,20){\framebox(10,10)[tl]{ }}
\put(120,10){\framebox(10,10)[tl]{ }}

\put(110,40){\framebox(10,10)[tl]{ }}
\put(110,30){\framebox(10,10)[tl]{ }}
\put(110,20){\framebox(10,10)[tl]{ }}
\put(110,10){\framebox(10,10)[tl]{ }}

\put(100,30){\framebox(10,10)[tl]{ }}
\put(100,20){\framebox(10,10)[tl]{ }}
\put(100,10){\framebox(10,10)[tl]{ }}

\put(90,20){\framebox(10,10)[tl]{ }}
\put(90,10){\framebox(10,10)[tl]{ }}

\put(80,10){\framebox(10,10)[tl]{ }}

\end{picture}

\end{example}

Given a composition $\alpha=(\alpha_1, \ldots,\alpha_n)$, we let 
\[ \delta_\alpha= (n^{\alpha_n},{n-1}^{\alpha_{n-1}}, \ldots,2^{\alpha_2}, 1^{\alpha_1}) \textrm{ and } \Delta_\alpha= (n^{\alpha_n},{n-1}^{\alpha_{n-1}}, \ldots,2^{\alpha_2}, 1^{\alpha_1})^\circ. \]
We call a skew diagram $D$ a \textit{fat staircase} if $D=\delta_{\alpha}$ or $D=\Delta_{\alpha}$ for some composition $\alpha$.
The numbers $\alpha_i$ count the number of rows of $D$ with $i$ boxes, for each $i$.
Using this notation the regular staircases may be expressed as $\delta_n = \delta_{(1^n)}$ and $\Delta_n = \Delta_{(1^n)}$, respectively. 
Both fat staircases $\delta_\alpha$ and $\Delta_\alpha$ have width $= l(\alpha)$ and length $= |\alpha| = \sum_{i=1}^{n} \alpha_i$. 

\begin{example}
Here we see the the fat staircases $\delta_{(1,2,2)}$ and $\Delta_{(3,1,2,3)}$.

\

\

\

\setlength{\unitlength}{0.4mm}

\begin{picture}(100,50)(-50,-15)

\put(10,10){\framebox(10,10)[tl]{ }}
\put(20,10){\framebox(10,10)[tl]{ }}
\put(30,10){\framebox(10,10)[tl]{ }}
     
\put(10,0){\framebox(10,10)[tl]{ }}
\put(20,0){\framebox(10,10)[tl]{ }}
\put(30,0){\framebox(10,10)[tl]{ }}
 
\put(10,-10){\framebox(10,10)[tl]{ }}
\put(20,-10){\framebox(10,10)[tl]{ }}

\put(10,-20){\framebox(10,10)[tl]{ }}
\put(20,-20){\framebox(10,10)[tl]{ }}

\put(10,-30){\framebox(10,10)[tl]{ }}

\put(20,-40){$\delta_{(1,2,2)}$}

\put(115,-40){$\Delta_{(3,1,2,3)}$}

\put(100,-30){\framebox(10,10)[tl]{ }}
\put(110,-30){\framebox(10,10)[tl]{ }}
\put(120,-30){\framebox(10,10)[tl]{ }}
\put(130,-30){\framebox(10,10)[tl]{ }}

\put(100,-20){\framebox(10,10)[tl]{ }}
\put(110,-20){\framebox(10,10)[tl]{ }}
\put(120,-20){\framebox(10,10)[tl]{ }}
\put(130,-20){\framebox(10,10)[tl]{ }}

\put(100,-10){\framebox(10,10)[tl]{ }}
\put(110,-10){\framebox(10,10)[tl]{ }}
\put(120,-10){\framebox(10,10)[tl]{ }}
\put(130,-10){\framebox(10,10)[tl]{ }}

\put(110,0){\framebox(10,10)[tl]{ }}
\put(120,0){\framebox(10,10)[tl]{ }}
\put(130,0){\framebox(10,10)[tl]{ }}

\put(110,10){\framebox(10,10)[tl]{ }}
\put(120,10){\framebox(10,10)[tl]{ }}
\put(130,10){\framebox(10,10)[tl]{ }}

\put(120,20){\framebox(10,10)[tl]{ }}
\put(130,20){\framebox(10,10)[tl]{ }}

\put(130,30){\framebox(10,10)[tl]{ }}

\put(130,40){\framebox(10,10)[tl]{ }}

\put(130,50){\framebox(10,10)[tl]{ }}

\end{picture}

\end{example}

\

\

\

Given a composition $\alpha$, $k \geq 0$, and partitions $\lambda$, $\mu$ with $\lambda_1 - \mu_1 -k \leq l(\alpha)$ we now define $\mathcal{S}(\lambda,\mu, \alpha^\triangleleft;k)$ to be the diagram obtained by placing $\lambda / \mu$ immediately below $\Delta_{\alpha}$ such that the rows of the two diagrams overlap in precisely $\lambda_1 - \mu_1 -k$ positions.
We call $\mathcal{S}(\lambda, \mu, \alpha^\triangleleft;k)$ a \textit{fat staircase with bad foundation}.
The subdiagram $\lambda / \mu$ is called the foundation of $\mathcal{S}(\lambda,\mu,\alpha^\triangleleft;k)$.

The fact that $\Delta_{\alpha}$ and $\lambda / \mu$ overlap in precisely $\lambda_1 - \mu_1 -k$ positions means that the first row of $\lambda / \mu$ begins exactly one box below and $k$ boxes left of the bottom-left box of the diagram $\Delta_{\alpha}$.
When we wish to have a partition as the foundation we shall write $\mu = \emptyset$ and simply use $\mathcal{S}(\lambda,\alpha^\triangleleft;k)$ in place of $\mathcal{S}(\lambda,\mu,\alpha^\triangleleft;k)$.

\begin{example} If we take $\alpha = (1,1,3,1,2,1)$, $\lambda = (6,5,5,5,3)$, $\mu = \emptyset$, and $k=0$, then we obtain the following staircase with bad foundation $\mathcal{S}(\lambda, \alpha^\triangleleft;k)$.

\

\

\

\setlength{\unitlength}{0.4mm}

\begin{picture}(000,140)(110,-70)

\put(180,30){$\Delta_{\alpha}$}

\put(182,-50){$\lambda$}

\put(160,-20){\dashbox{3}(160,0)[tl]{ }}

\put(210,-70){\framebox(10,10)[tl]{ }}
\put(220,-70){\framebox(10,10)[tl]{ }}
\put(230,-70){\framebox(10,10)[tl]{ }}

\put(210,-60){\framebox(10,10)[tl]{ }}
\put(220,-60){\framebox(10,10)[tl]{ }}
\put(230,-60){\framebox(10,10)[tl]{ }}
\put(240,-60){\framebox(10,10)[tl]{ }}
\put(250,-60){\framebox(10,10)[tl]{ }}

\put(210,-50){\framebox(10,10)[tl]{ }}
\put(220,-50){\framebox(10,10)[tl]{ }}
\put(230,-50){\framebox(10,10)[tl]{ }}
\put(240,-50){\framebox(10,10)[tl]{ }}
\put(250,-50){\framebox(10,10)[tl]{ }}

\put(210,-40){\framebox(10,10)[tl]{ }}
\put(220,-40){\framebox(10,10)[tl]{ }}
\put(230,-40){\framebox(10,10)[tl]{ }}
\put(240,-40){\framebox(10,10)[tl]{ }}
\put(250,-40){\framebox(10,10)[tl]{ }}

\put(210,-30){\framebox(10,10)[tl]{ }}
\put(220,-30){\framebox(10,10)[tl]{ }}
\put(230,-30){\framebox(10,10)[tl]{ }}
\put(240,-30){\framebox(10,10)[tl]{ }}
\put(250,-30){\framebox(10,10)[tl]{ }}
\put(260,-30){\framebox(10,10)[tl]{ }}

\put(210,-20){\framebox(10,10)[tl]{ }}
\put(220,-20){\framebox(10,10)[tl]{ }}
\put(230,-20){\framebox(10,10)[tl]{ }}
\put(240,-20){\framebox(10,10)[tl]{ }}
\put(250,-20){\framebox(10,10)[tl]{ }}
\put(260,-20){\framebox(10,10)[tl]{ }}

\put(220,-10){\framebox(10,10)[tl]{ }}
\put(230,-10){\framebox(10,10)[tl]{ }}
\put(240,-10){\framebox(10,10)[tl]{ }}
\put(250,-10){\framebox(10,10)[tl]{ }}
\put(260,-10){\framebox(10,10)[tl]{ }}

\put(220,0){\framebox(10,10)[tl]{ }}
\put(230,0){\framebox(10,10)[tl]{ }}
\put(240,0){\framebox(10,10)[tl]{ }}
\put(250,0){\framebox(10,10)[tl]{ }}
\put(260,0){\framebox(10,10)[tl]{ }}

\put(230,10){\framebox(10,10)[tl]{ }}
\put(240,10){\framebox(10,10)[tl]{ }}
\put(250,10){\framebox(10,10)[tl]{ }}
\put(260,10){\framebox(10,10)[tl]{ }}

\put(240,20){\framebox(10,10)[tl]{ }}
\put(250,20){\framebox(10,10)[tl]{ }}
\put(260,20){\framebox(10,10)[tl]{ }}

\put(240,30){\framebox(10,10)[tl]{ }}
\put(250,30){\framebox(10,10)[tl]{ }}
\put(260,30){\framebox(10,10)[tl]{ }}

\put(240,40){\framebox(10,10)[tl]{ }}
\put(250,40){\framebox(10,10)[tl]{ }}
\put(260,40){\framebox(10,10)[tl]{ }}

\put(250,50){\framebox(10,10)[tl]{ }}
\put(260,50){\framebox(10,10)[tl]{ }}

\put(260,60){\framebox(10,10)[tl]{ }}

\end{picture}

\end{example}

The reason we write $\mathcal{S}(\lambda, \mu, \balpha;k)$ instead of writing $\mathcal{S}(\lambda, \mu,\alpha ;k)$ is to avoid confusion in the cases when the composition $\alpha=(\alpha_1,\alpha_2, \ldots, \alpha_n)$ is weakly decreasing and could be misinterpreted as representing the diagram of a partitions. 
In this case, confusion could arise, since we shall now define $\mathcal{S}(\lambda, \mu, D;k)$ for any skew diagram $D$.

Given a skew diagram  $D$ and partitions $\lambda$, $\mu$ with $\lambda_1 - \mu_1 -k \leq $ the length of the last row of $D$, we now define $\mathcal{S}(\lambda,\mu, D;k)$ to be the diagram obtained by placing $\lambda / \mu$ immediately below $D$ such that the rows of the two diagrams overlap in precisely $\lambda_1 - \mu_1 -k$ positions.
As before, $k$ is the distance to the left that the first row of $\lambda / \kappa$ extends from the last row of $D$.

For example, consider the diagrams $\mathcal{S}((2,1), (2,2);1)$ and $\mathcal{S}((2,1),(2,2)^\triangleleft ;1)$ shown below. 
The first uses the diagram $D=(2,2)$, whereas the second uses the diagram $D=\Delta_{(2,2)}$. 

\setlength{\unitlength}{0.4mm}

\begin{picture}(100,80)(-10,-5)

\put(60,40){\framebox(10,10)[tl]{ }}
\put(70,40){\framebox(10,10)[tl]{ }}

\put(60,30){\framebox(10,10)[tl]{ }}
\put(70,30){\framebox(10,10)[tl]{ }}

\put(50,20){\framebox(10,10)[tl]{ }}
\put(60,20){\framebox(10,10)[tl]{ }}

\put(50,10){\framebox(10,10)[tl]{ }}

\put(20,-10){$\mathcal{S}((2,1), (2,2);1)$}

\put(210,60){\framebox(10,10)[tl]{ }}

\put(210,50){\framebox(10,10)[tl]{ }}

\put(200,40){\framebox(10,10)[tl]{ }}
\put(210,40){\framebox(10,10)[tl]{ }}

\put(200,30){\framebox(10,10)[tl]{ }}
\put(210,30){\framebox(10,10)[tl]{ }}

\put(190,20){\framebox(10,10)[tl]{ }}
\put(200,20){\framebox(10,10)[tl]{ }}

\put(190,10){\framebox(10,10)[tl]{ }}

\put(170,-10){$\mathcal{S}((2,1),(2,2)^\triangleleft;1)$}

\put(20,30){\dashbox{1}(90,0)[tl]{ }}

\put(160,30){\dashbox{1}(90,0)[tl]{ }}

\end{picture}

\

\

One of the advantages in computing the skew Schur functions of fat staircases with bad foundations is that, when using the Littlewood-Richarson rule, the $\Delta_\alpha$ portion of the diagram can be filled in only one way. 
By using Theorem~\ref{rotate} we can be see this algebraically from the equation 
\[ s_{\Delta_\alpha} = s_{\Delta_\alpha^\circ} = s_{\delta_\alpha}, \]
 where $s_{\delta_\alpha}$ is a Schur function. 
The unique filling of $\Delta_\alpha$ obeying the semistandard conditions and the lattice condition is easily seen to be the filling that places the entries $1,2,\ldots, l$ into each column of length $l$.

\setlength{\unitlength}{0.5mm}
\begin{picture}(0,0)(0,0)
\put(90,-74){\line(0,1){26} }
\end{picture}
\begin{lemma} 

\label{kfatfirstrowlemma}
Let $\mathcal{S}(\lambda,\mu,\alpha^\triangleleft;k)$ be a fat staircase with bad foundation for some $k \geq 0$ and $\mathcal{T}$ be a SSYT of shape $\mathcal{S}(\lambda,\mu,\alpha^\triangleleft;k)$ whose reading word is lattice.
If $\alpha=(\alpha_1,\dots, \alpha_n)$, then the entries in the first row of the foundation of $\mathcal{T}$ consist values taken from the set 
\[R_{\alpha,k} = \left\{ 1+\sum_{i=1}^{j} \alpha_{n+1-i} \textrm{ } \textrm{ } \textrm{ } j= 1, 2, \ldots, n  \right\} \cup \left\{ \begin{array}{cll}
\{1\} & \mbox{if} & k > 0 \\
\emptyset &  \mbox{if} & k=0. 
\end{array}\right.\]
Furthermore, the value $1$ can occur at most $k$ times and the rest of the values can appear at most once.
\end{lemma}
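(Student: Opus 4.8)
The plan is to exploit the rigidity of the $\Delta_\alpha$-part of $\mathcal{T}$ and then read off the constraints on the foundation directly from the lattice condition. Write $n=l(\alpha)$ and $L_m=\sum_{i=1}^{m}\alpha_{n+1-i}$ for $m=1,\dots,n$; these are exactly the lengths of the $n$ columns of $\Delta_\alpha$ (the $m$-th column of $\Delta_\alpha$ being the $(n+1-m)$-th column of $\delta_\alpha$), so $L_1<L_2<\dots<L_n=|\alpha|$, and the set $R_{\alpha,k}$ of the statement is $\{L_1+1,\dots,L_n+1\}$ together with $1$ exactly when $k>0$. By the remark preceding the lemma, the restriction of $\mathcal{T}$ to $\Delta_\alpha$ is forced: column $m$ is filled with $1,2,\dots,L_m$ from the top, so the box at the foot of column $m$ contains $L_m$, and that is the box sitting directly above the $m$-th overlap box of the foundation's first row.

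Next I would analyse the reading word. Since the first row of the foundation lies one row strictly below the last row of $\Delta_\alpha$, no row of $\mathcal{T}$ mixes boxes of the two pieces, so the reading word of $\mathcal{T}$ is the reading word of the $\Delta_\alpha$-subtableau followed by the reading word of the foundation-subtableau. Hence once the $\Delta_\alpha$-part has been read, the number of $v$'s recorded so far is $c_v:=\#\{m:L_m\ge v\}$. The crucial elementary point is that, because the $L_m$ are pairwise distinct, $c_{v-1}-c_v\in\{0,1\}$ for every $v\ge 2$, with $c_{v-1}-c_v=1$ precisely when $v-1\in\{L_1,\dots,L_n\}$, i.e.\ when $v\in\{L_1+1,\dots,L_n+1\}$; moreover $c_v=0$ for $v>L_n$. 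Thus after $\Delta_\alpha$ the lattice inequality at a value $v$ has slack at most $1$, and positive slack exactly at the candidates $L_j+1$.

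Now let $b_1,\dots,b_p$ ($p=\lambda_1-\mu_1$) be the boxes of the foundation's first row from left to right, with weakly increasing entries $t_1\le\dots\le t_p$, read in the order $b_p,b_{p-1},\dots,b_1$; by the placement in the definition of $\mathcal{S}(\lambda,\mu,\alpha^\triangleleft;k)$, the box $b_i$ has a box of $\Delta_\alpha$ directly above it if and only if $i>k$. I would first show that no value $w\ge 2$ is repeated: if $b_{a-1}$ and $b_a$ both carried $w$ with $a$ the largest index carrying $w$, then (by weak monotonicity) $b_{a+1},\dots,b_p$ all carry values $>w$, so immediately after reading $b_{a-1}$ we have recorded $c_w+2$ copies of $w$ but only $c_{w-1}$ copies of $w-1$, which is impossible since $c_{w-1}\le c_w+1$. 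Consequently each value $t_i\ge 2$ occurs exactly once in the row, so when $b_i$ is read every previously-read foundation box carries a value $\ge t_i$ (hence $\ne t_i$ and $\ne t_i-1$); therefore right after reading $b_i$ there are $c_{t_i}+1$ copies of $t_i$ and only $c_{t_i-1}$ copies of $t_i-1$, forcing $c_{t_i-1}>c_{t_i}$ and hence $t_i\in\{L_1+1,\dots,L_n+1\}$ by the observation above. Finally, a box can contain a $1$ only if it has no box directly above it, which by the previous paragraph happens only for $b_1,\dots,b_k$; this shows the value $1$ occurs at most $k$ times and only when $k>0$. Together these give exactly $R_{\alpha,k}$ with the claimed multiplicities.

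The main obstacle, I expect, is isolating the right invariant of the $\Delta_\alpha$-prefix — that every lattice inequality has slack at most one, attained exactly at the values $L_j+1$ — and recognizing that the ``no repeated value $\ge 2$'' step forces this slack to collapse; once that is in hand the rest is essentially forced by sweeping the first foundation row from right to left. The remaining work, namely unwinding which boxes of that row sit below which columns of $\Delta_\alpha$ and which have nothing above them, is routine bookkeeping from the definition of $\mathcal{S}(\lambda,\mu,\alpha^\triangleleft;k)$.
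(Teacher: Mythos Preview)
Your argument is correct and follows essentially the same route as the paper's proof: both exploit the forced filling of $\Delta_\alpha$, use column-strictness to bound the number of $1$'s by $k$, and use the lattice condition on the reading word (after the $\Delta_\alpha$-prefix has been read) to pin down which values $t>1$ can occur and to rule out repetitions. Your version is simply more explicit in the bookkeeping --- introducing $L_m$ and the counts $c_v$ and isolating the ``slack at most one'' observation --- whereas the paper compresses this into the single sentence that the lattice condition forces strictly more $(t-1)$'s than $t$'s in $\Delta_\alpha$.
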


\begin{proof}

Let $R$ be the first row of the foundation of $\mathcal{T}$ and $t \in R$.

Since $\mathcal{T}$ is a SSYT, the columns strictly increase. Thus $t = 1$ is allowed if and only if $k \geq 1$ since it is precisely in that case that the first value in $R$ is not below an entry of $\Delta_{\alpha}$. 
Furthermore, since there are only $k$ boxes from the first row of the foundation of $\mathcal{T}$ that extend out from $\Delta_\alpha$, there can be at most $k$ $1$'s in $R$. 

If $t > 1$ then, when reading the row $R$ from right to left, the lattice condition implies that there is at least one more $t-1$ in $\Delta_{\alpha}$ than there are $t$'s in $\Delta_{\alpha}$. 
Since the content of $\Delta_{\alpha}$ is $(n^{\alpha_n},{n-1}^{\alpha_{n-1}}, \ldots, 1^{\alpha_1})$, the only instances when this occurs are when $t=1+\sum_{i=1 \ldots j} \alpha_{n+1-j}$ for $j=1,2,\ldots,n$.
Therefore every entry of $R$ is an element of $R_{\alpha,k}$.
Further, if a value $t>1$ appeared twice in $R$, then the lattice condition would be violated. 
Hence each $t \in R_{\alpha,k}$, $t \neq 1$, can appear at most once in $R$. \qed

\end{proof}

The next result tells us when we may obtain a SSYT of shape $\mathcal{S}(\lambda,\mu,\alpha^\triangleleft;k)$ with lattice reading word from a SSYT of shape $\lambda / \mu  \oplus \Delta_\alpha$ with lattice reading word. 

\begin{lemma} 
\label{kfatjoinlemma}
Let $\alpha$ be a composition, $\lambda$ and $\mu$ be partitions, and $k \geq 0$ such that $\lambda_1 - \mu_1 -k \leq l(\alpha)$.
If $T$ is a SSYT of shape $\lambda / \mu  \oplus \Delta_\alpha$ with lattice reading word such that there are at most $k$ $1$'s in the first row of $\lambda / \mu$, then the tableau of shape $\mathcal{S}(\lambda,\mu,\alpha^\triangleleft;k)$ obtained from $T$ by shifting the foundation $\lambda / \mu$ to the right is also a SSYT with lattice reading word.

\end{lemma}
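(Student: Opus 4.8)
The plan is to verify directly that the tableau $T'$ of shape $\mathcal{S}(\lambda,\mu,\alpha^{\triangleleft};k)$ obtained from $T$ by sliding the foundation to the right is a SSYT whose reading word is lattice, carrying over as much as possible from $T$. Two of the three requirements are immediate. Sliding the foundation only translates each of its rows horizontally and leaves the rows of $\Delta_{\alpha}$ untouched, so every row of $T'$ is still weakly increasing. Moreover, in both $\lambda/\mu\oplus\Delta_{\alpha}$ and $\mathcal{S}(\lambda,\mu,\alpha^{\triangleleft};k)$ the rows of $\Delta_{\alpha}$ lie above the rows of the foundation in the same order, with the first row of the foundation one row below the bottom row of $\Delta_{\alpha}$; since the reading word ignores horizontal position, $T$ and $T'$ have literally the same reading word, so it is still lattice. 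Thus the entire content of the lemma is to check that the columns of $T'$ strictly increase.

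I would next isolate which columns of $T'$ need attention. Using $\mu_i\le\mu_1$ and $\lambda_i\le\lambda_1$ one sees that no row of the foundation reaches further right than its first row, so together with $\lambda_1-\mu_1-k\le l(\alpha)$ the only columns of $\mathcal{S}(\lambda,\mu,\alpha^{\triangleleft};k)$ meeting both $\Delta_{\alpha}$ and the foundation are the $\lambda_1-\mu_1-k$ columns sitting directly beneath the leftmost $\lambda_1-\mu_1-k$ columns of $\Delta_{\alpha}$; since the relevant columns of $\lambda/\mu$ start in row $1$, in each such column the foundation contributes a contiguous block of boxes whose top box lies in the first row of the foundation, immediately below the contiguous block of $\Delta_{\alpha}$-boxes, which ends in the bottom row of $\Delta_{\alpha}$. (If $\lambda_1-\mu_1-k\le 0$ there is nothing to check.) Recall that $\Delta_{\alpha}$ admits only the filling placing $1,2,\dots,L$ down each column of length $L$, and that, read from the left, the columns of $\Delta_{\alpha}$ have the strictly increasing lengths $L_j:=\sum_{i=1}^{j}\alpha_{n+1-i}$ for $j=1,\dots,n$. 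Hence, writing $a_1\le a_2\le\cdots\le a_{\lambda_1-\mu_1}$ for the entries of the first row of the foundation in $T$, the $\Delta_{\alpha}$-entry just above the foundation in the $j$-th of these columns is $L_j$, and the column condition for $T'$ becomes precisely the requirement that $a_{k+j}>L_j$ for $j=1,\dots,\lambda_1-\mu_1-k$.

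The remaining, and main, step is to prove these inequalities, and this is where the hypothesis on the $1$'s is used; the argument is a counting refinement of Lemma~\ref{kfatfirstrowlemma}. Applying the reasoning of that lemma to the first row of the foundation inside $T$ (which uses only that this row is weakly increasing, that the word of $\Delta_{\alpha}$ is read before it, and that the $L_j$ are pairwise distinct), every entry $a_s>1$ must lie in $\{L_1+1,\dots,L_n+1\}\subseteq R_{\alpha,k}$, and no such value can occur twice in the row, since a repeat would force a surplus of at least two copies of $L_i$ over copies of $L_i+1$ among the entries read up to that point, whereas $\Delta_{\alpha}$ has exactly one column of each length $L_i$. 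Now fix $j$ with $1\le j\le\lambda_1-\mu_1-k$ and suppose $a_{k+j}\le L_j$. Then $a_1,\dots,a_{k+j}$ are all $\le L_j$; by hypothesis at most $k$ of them equal $1$, and every other one lies in $R_{\alpha,k}\cap\{2,\dots,L_j\}=\{L_1+1,\dots,L_{j-1}+1\}$ with multiplicity at most $1$. That set has only $j-1$ elements, so $k+j\le k+(j-1)$, a contradiction. Hence $a_{k+j}>L_j$ for every such $j$, which completes the column check and the lemma.

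I expect the main obstacle to be the bookkeeping in the last paragraph — extracting the ``at most once'' statement from the lattice condition and correctly identifying $R_{\alpha,k}\cap\{2,\dots,L_j\}$ — rather than the geometric reduction, which is routine; some care is also needed to confirm that the slid foundation lines up exactly one row below $\Delta_{\alpha}$ with overlap $\lambda_1-\mu_1-k$, so that the column boundaries fall precisely where claimed.
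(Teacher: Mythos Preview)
Your proof is correct and follows essentially the same approach as the paper: both observe that rows and the reading word are unchanged by the shift, reduce to checking column strictness at the seam between $\Delta_{\alpha}$ and the first row of the foundation, and then use the structure of the admissible values $R_{\alpha,k}$ (derived as in Lemma~\ref{kfatfirstrowlemma}) together with the hypothesis on the number of $1$'s to verify $a_{k+j}>L_j$. The only cosmetic difference is that the paper argues directly (splitting into the cases $k\ge 1$ and $k=0$ and bounding $r_{j+k}$ from below by the $(j{+}1)$-st smallest element of $R_{\alpha,k}$), whereas you give a unified pigeonhole contradiction; the content is the same.
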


\begin{proof}
Let $T$ be a SSYT of shape $\lambda / \mu  \oplus \Delta_\alpha$ with lattice reading word and let $T_k'$ be the tableau of shape $\mathcal{S}(\lambda,\mu,\alpha^\triangleleft;k)$ obtained from $T$ by shifting the foundation $\lambda$ to the right.
Since shifting $\lambda / \mu$ to the right does not affect the order in which the entries are read, $T_k'$ has a lattice reading word.
Also, the rows of $T_k'$ weakly increase since they are the same as the rows of $T$.
Further, to check that the columns of $T_k'$ strictly increase, we need only check that they strictly increase at the positions where the two subdiagrams $\Delta_{\alpha}$ and $\lambda / \mu$ are joined.

Let $R$ denote the first row of $\lambda / \mu$ and $\alpha= (\alpha_1, \ldots, \alpha_n)$. 
As in the proof of Lemma~\ref{kfatfirstrowlemma}, the lattice condition on $T$ implies that the entries of $R$ consist of values of $R_{\alpha, k}$. 
Further, the value $1$ can occur at most $k$ times and the rest of the values of $R$ are distinct.
Let $q$ be the number of times $1$ appears in $R$, so that $k \geq q$. 
Further, let $r_1 \leq r_2 \leq \ldots \leq r_{\lambda_1}$ be the entries of $R$. 

\

Consider the case $k \geq 1$. 
Since $r_1=r_2 =\ldots = r_q =1$, we have $r_q = \textrm{min}(R_{\alpha,k})$ and for each $1 \leq j \leq n$ we have  
\[ r_{j+q} \geq \textrm{ the } (j+1) \textrm{-th smallest value of } R_{\alpha,k} = 1 + \sum_{i=1}^{j} \alpha_{n+1-i}. \] 
Since $k \geq q$, for each $1 \leq j \leq n$ we have 
\[ r_{j+k} \geq r_{j+q} \geq 1 + \sum_{i=1}^{j} \alpha_{n+1-i}. \]
As illustrated in the diagram below, the entry $r_{j+k}$ is beneath $\sum_{i=1}^{j} \alpha_{n+1-i}$ boxes. 
From the unique filling of $\Delta_{\alpha}$, the entry of $\Delta_\alpha$ directly above $r_{j+k}$ is $\sum_{i=1}^{j} \alpha_{n+1-i}$.

\

\

\

\

\setlength{\unitlength}{0.7mm}

\begin{picture}(100,80)(30,0)

\put(115,105){$j$}

\put(142,105){$n-j$}

\put(40,15){$\sum_{i=1}^{j} \alpha_{n+1-i}$}

\put(75,-30){\line(0,1){90}}

\put(75,-30){\line(1,0){5}}
\put(75,60){\line(1,0){5}}

\put(90,100){\line(1,0){49}}
\put(141,100){\line(1,0){19}}

\put(90,100){\line(0,-1){5}}
\put(139,100){\line(0,-1){5}}

\put(141,100){\line(0,-1){5}}
\put(160,100){\line(0,-1){5}}

\put(90,-30){\dashbox{2}(70,120)[tl]{ }}

\put(50,-62){\dashbox{2}(110,30)[tl]{ }}

\put(175,10){$\Delta_\alpha$}
\put(175,-55){$\lambda / \mu $}

\put(90,-30){\framebox(10,10)[tl]{ }}
\put(100,-30){\framebox(10,10)[tl]{ }}
\put(110,-30){\framebox(10,10)[tl]{ }}
\put(130,-30){\framebox(10,10)[tl]{ }}
\put(150,-30){\framebox(10,10)[tl]{ }}

\put(100,-20){\framebox(10,10)[tl]{ }}
\put(110,-20){\framebox(10,10)[tl]{ }}
\put(130,-20){\framebox(10,10)[tl]{ }}
\put(150,-20){\framebox(10,10)[tl]{ }}

\put(100,-10){\framebox(10,10)[tl]{ }}
\put(110,-10){\framebox(10,10)[tl]{ }}
\put(130,-10){\framebox(10,10)[tl]{ }}
\put(150,-10){\framebox(10,10)[tl]{ }}

\put(110,0){\framebox(10,10)[tl]{ }}
\put(130,0){\framebox(10,10)[tl]{ }}
\put(150,0){\framebox(10,10)[tl]{ }}

\put(110,10){\framebox(10,10)[tl]{ }}
\put(130,10){\framebox(10,10)[tl]{ }}
\put(150,10){\framebox(10,10)[tl]{ }}

\put(130,20){\framebox(10,10)[tl]{ }}
\put(150,20){\framebox(10,10)[tl]{ }}

\put(130,30){\framebox(10,10)[tl]{ }}
\put(150,30){\framebox(10,10)[tl]{ }}

\put(130,40){\framebox(10,10)[tl]{ }}
\put(150,40){\framebox(10,10)[tl]{ }}

\put(130,50){\framebox(10,10)[tl]{ }}
\put(150,50){\framebox(10,10)[tl]{ }}

\put(150,60){\framebox(10,10)[tl]{ }}

\put(150,70){\framebox(10,10)[tl]{ }}

\put(150,80){\framebox(10,10)[tl]{ }}

\put(91,-39){$r_{1+k}$}
\put(101,-39){$r_{2+k}$}
\put(111,-39){$r_{3+k}$}
\put(131,-39){$r_{j+k}$}

\put(53,-39){$r_1$}
\put(63,-39){$r_2$}
\put(72,-49){$\cdots$}
\put(83,-39){$r_k$}

\put(50,-42){\framebox(10,10)[tl]{ }}
\put(60,-42){\framebox(10,10)[tl]{ }}
\put(80,-42){\framebox(10,10)[tl]{ }}

\put(90,-42){\framebox(10,10)[tl]{ }}
\put(100,-42){\framebox(10,10)[tl]{ }}
\put(110,-42){\framebox(10,10)[tl]{ }}
\put(130,-42){\framebox(10,10)[tl]{ }}
     
\put(50,-52){\framebox(10,10)[tl]{ }}
\put(60,-52){\framebox(10,10)[tl]{ }}
\put(80,-52){\framebox(10,10)[tl]{ }}

\put(90,-52){\framebox(10,10)[tl]{ }}
\put(100,-52){\framebox(10,10)[tl]{ }}
\put(110,-52){\framebox(10,10)[tl]{ }}
\put(130,-52){\framebox(10,10)[tl]{ }}

\put(50,-62){\framebox(10,10)[tl]{ }}
\put(60,-62){\framebox(10,10)[tl]{ }}
\put(80,-62){\framebox(10,10)[tl]{ }}

\put(90,-62){\framebox(10,10)[tl]{ }}
\put(100,-62){\framebox(10,10)[tl]{ }}

\put(122,-49){$\cdots$}
\put(142,-49){$\cdots$}

\put(122,-5){\ldots}
\put(142,-5){\ldots}

\end{picture}

\newpage

Thus the columns strictly increase. 
Therefore $T_1'$ is a SSYT with lattice reading word, as desired.

\

Now consider the case when $k=0$. Then for each $1 \leq j \leq n$ we have 
\[ r_j \geq j \textrm{-th smallest value of } R_{\alpha, k}  \geq 1 + \sum_{i=1}^{j} \alpha_{n+1-i}. \]
Also, the entry $r_j$ is beneath precisely $\sum_{i=1}^{j} \alpha_{n+1-i}$ boxes, so the entry of $\Delta_\alpha$ directly above $r_j$ is $\sum_{i=1}^{j} \alpha_{n+1-i}$. 
Thus the columns strictly increase. 
Therefore $T_k'$ is a SSYT with lattice reading word, as desired.
\qed

\end{proof}

\begin{example}
Let $\alpha = (2,2,1)$, $\lambda = (3,2)$, and $k=2$.
Consider the SSYT of shape $\lambda \oplus \Delta_\alpha$ with lattice reading word and two $1$'s in the first row of $\lambda$ shown on the left.
This gives rise to the SSYT of shape $\mathcal{S}(\lambda,\alpha^\triangleleft;2)$ with lattice reading word shown on the right.

\setlength{\unitlength}{0.5mm}

\begin{picture}(0,100)(-30,10)

\put(70,90){\framebox(10,10)[c]{\textrm{ }$1$ }}
\put(70,80){\framebox(10,10)[c]{\textrm{ }$2$ } }
\put(70,70){\framebox(10,10)[c]{\textrm{ }$3$ } }
\put(70,60){\framebox(10,10)[c]{\textrm{ }$4$ } }
\put(70,50){\framebox(10,10)[c]{\textrm{ }$5$ } }

\put(60,70){\framebox(10,10)[c]{\textrm{ }$1$ } }
\put(60,60){\framebox(10,10)[c]{\textrm{ }$2$ } }
\put(60,50){\framebox(10,10)[c]{\textrm{ }$3$ } }

\put(50,50){\framebox(10,10)[c]{\textrm{ }$1$ } }

\put(20,40){\framebox(10,10)[c]{\textrm{ }$1$ } }
\put(30,40){\framebox(10,10)[c]{\textrm{ }$1$ } }
\put(40,40){\framebox(10,10)[c]{\textrm{ }$6$ } }

\put(20,30){\framebox(10,10)[c]{\textrm{ }$2$ } }
\put(30,30){\framebox(10,10)[c]{\textrm{ }$7$ } }

\put(170,90){\framebox(10,10)[c]{\textrm{ }$1$ }}
\put(170,80){\framebox(10,10)[c]{\textrm{ }$2$ } }
\put(170,70){\framebox(10,10)[c]{\textrm{ }$3$ } }
\put(170,60){\framebox(10,10)[c]{\textrm{ }$4$ } }
\put(170,50){\framebox(10,10)[c]{\textrm{ }$5$ } }

\put(160,70){\framebox(10,10)[c]{\textrm{ }$1$ } }
\put(160,60){\framebox(10,10)[c]{\textrm{ }$2$ } }
\put(160,50){\framebox(10,10)[c]{\textrm{ }$3$ } }

\put(150,50){\framebox(10,10)[c]{\textrm{ }$1$ } }

\put(130,40){\framebox(10,10)[c]{\textrm{ }$1$ } }
\put(140,40){\framebox(10,10)[c]{\textrm{ }$1$ } }
\put(150,40){\framebox(10,10)[c]{\textrm{ }$6$ } }

\put(130,30){\framebox(10,10)[c]{\textrm{ }$2$ } }
\put(140,30){\framebox(10,10)[c]{\textrm{ }$7$ } }

\end{picture}

\end{example}

\section{Sums of Fat Staircases}

Suppose a diagram $D= \rho / \kappa$ is such that
\[s_D = \sum_{\nu = \textrm{{\footnotesize fat stair}}} c_{\kappa \nu}^{\rho} s_\nu. \]
That is, for every $\nu$ that is not a fat staircase, we have $c_{\kappa \nu}^{\rho}=0$.
When this happens we say that $D$ is a \textit{sum of fat staircases}.
For each fat staircase $\nu$ we let $\alpha(\nu)$ be the composition such that $\delta_{\alpha(\nu)}= \nu$.
That is, $\alpha(\nu)_i$ is the number of parts of $\nu$ equal to $i$.
Then using these compositions we can rewrite $s_D$
as \[ s_D = \sum_{\nu = \textrm{{\footnotesize fat stair}}} c_{\kappa \nu}^{\rho} s_{\Delta_{ {\footnotesize \alpha(\nu)} }}  .\]

\begin{example}
Let $\rho = (4,3,3,3,3,3,3)$, $\kappa = (2,2,2,1,1)$, and $D =  \rho / \kappa$.
Then 
\begin{eqnarray*} 
s_D &=& s_{(4,3,2,2,1,1,1)} + s_{(3,3,3,2,1,1,1)} + s_{(3,3,2,2,2,1,1)} \\
&=&  s_{\Delta_{(3,2,1,1)}} + s_{\Delta_{(3,1,3)}} + s_{\Delta_{(2,3,2)}}.\\
\end{eqnarray*}

\end{example}

We now proceed to examine which skew diagrams $D$ are sums of fat staircases.
The following results are concerned with adding or removing columns to a skew diagram on the left or right side of the diagram. 

\begin{lemma}
\label{addonecol}
Let $c$ be a column and $D$ be a skew diagram that is not a sum of fat staircases and let $D_1 = D \odot_i c$  and $D_2 = c \odot_i D$ be obtained from $D$ by the addition of a single column.
Then neither $D_1$ nor $D_2$ is a sum of fat staircases.
\end{lemma}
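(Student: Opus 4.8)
My plan is to prove the contrapositive: assuming that $D$ is not a sum of fat staircases, I will produce a Littlewood--Richardson filling of $D \odot_i c$, and then of $c \odot_i D$, whose content is a partition that is not a fat staircase. Throughout I will use the characterization that a partition $\mu$ is a fat staircase precisely when $\mu_{l(\mu)} = 1$ and every two consecutive parts of $\mu$ differ by $0$ or $1$ (equivalently, every integer between $1$ and $\mu_1$ occurs as a part of $\mu$).

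By the Littlewood--Richardson rule, since $D = \lambda/\mu$ is not a sum of fat staircases there is a partition $\nu$ that is not a fat staircase and a semistandard tableau $T$ of shape $D$, content $\nu$, with lattice reading word. The first step --- which I expect to be the crux --- is a structural fact about $T$ itself: the entries of $T$ in the last column of $D$ (column $\lambda_1$, of some height $m \ge i$) are forced to be $1, 2, \dots, m$ from top to bottom. I would prove this by induction on the row. The top row of $D$ has its rightmost box in column $\lambda_1$, and that box is read first, so it is a $1$; since the row weakly increases, the whole top row consists of $1$'s. Inductively, if the first $k$ boxes of column $\lambda_1$ read $1, 2, \dots, k$ and each of the first $k$ rows uses only values $\le k$, then reading the word up to the rightmost box of row $k+1$ shows that a value $\ge k+2$ there would exceed every previously read value by at least two, violating the lattice condition; hence that entry is $k+1$, and row $k+1$ again uses only values $\le k+1$.

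Next I would attach $c$. Near-concatenation $D \odot_i c$ places the column $c$ (of length $\ell \ge i$) at the upper right of $D$, meeting at most the top $i$ rows of $D$. Define $T_1$ on $D \odot_i c$ by keeping $T$ on the copy of $D$ and filling $c$ with $1, 2, \dots, \ell$ from top to bottom. Then the column $c$ is strictly increasing; in each row $k$ with $1 \le k \le i$ the appended entry equals $\ell - i + k \ge k$, which by the first step is at least the rightmost entry of that row of $T$, so all rows remain weakly increasing; and the reading word of $T_1$ is obtained from that of $T$ by inserting the run $1, 2, \dots, \ell$ in increasing order, which preserves the lattice property (at any prefix, the inserted letters seen so far form an initial segment $1, \dots, t$, contributing a weakly decreasing vector of multiplicities). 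Thus $T_1$ is an LR filling of $D \odot_i c$ of content $\nu + (1^\ell)$, where $(1^\ell)$ means $1$ is added to each of the first $\ell$ parts. A short case split on whether $\ell < l(\nu)$, $\ell = l(\nu)$, or $\ell > l(\nu)$, using the characterization above, shows that $\nu + (1^\ell)$ can be a fat staircase only when $\nu$ is one (and that, when $\ell = l(\nu)$, it is never one); since $\nu$ is not a fat staircase, neither is $\nu + (1^\ell)$, so $D \odot_i c$ is not a sum of fat staircases.

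Finally, for $D_2 = c \odot_i D$ I would use rotation. A single column is invariant under $180^\circ$ rotation, so $(c \odot_i D)^\circ = D^\circ \odot_i c$, and by Theorem~\ref{rotate} the corresponding Schur functions agree. Hence $D^\circ$ is not a sum of fat staircases (as $s_{D^\circ} = s_D$), the case already handled gives that $D^\circ \odot_i c = (c \odot_i D)^\circ$ is not a sum of fat staircases, and therefore neither is $c \odot_i D$. Besides the first step, the only point that needs genuine care is making the fat-staircase case analysis for $\nu + (1^\ell)$ airtight; the construction of $T_1$ and the remaining verifications are routine.
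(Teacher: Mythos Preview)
Your argument is correct, but it takes a slightly different route from the paper's, and the detour is worth pointing out.

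The paper attaches the column on the \emph{left} first (the $c \odot_i D$ case). With $c$ on the left, semistandardness of the combined tableau is immediate: the $k$-th box from the top of any strictly increasing column of positive integers has entry at least $k$, so the values $1,\dots,n$ in $c$ sit weakly below the adjacent entries of $D$ with no further work. The paper then rotates to get $D \odot_i c$.

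You attach on the \emph{right} first, which forces you to prove the extra structural lemma that in every LR filling of $D$ the rightmost column is exactly $1,2,\dots,m$. Your induction for this is correct and the fact is pleasant in its own right, but it is not needed if one simply starts from the left side as the paper does. The lattice argument (inserting $1,\dots,\ell$ in order adds a weakly decreasing vector to every prefix count) and the content analysis (the gap $\nu_j-\nu_{j+1}\ge 2$ survives adding $(1^\ell)$) are essentially the same in both proofs, though your three-way case split on $\ell$ versus $l(\nu)$ is cleaner than the paper's somewhat terse version. Both proofs finish the remaining direction by $180^\circ$ rotation in the same way.
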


\begin{proof}
We note that for $D_1$ and $D_2$ to be defined we require that $i \leq $ the length of the first column of $D$, $i \leq $ the length of the last column of $D$, and $i \leq $ the length of $c$. 

We shall begin by proving that $D_2$ is not a sum of fat staircases. 
Since $D$ is not a sum of fat staircases there is a SSYT $T$ of shape $D$ and lattice reading word whose content is $\nu$, where $\nu$ is not a fat staircase.
That is, $\nu_i \geq \nu_{i+1} +2$ for some $i$.

Let $n$ be the length of the column $c$. Thus $i \leq n$. 
We create a tableau $T'$ of shape $D_2$ and lattice reading word by filling $c$ with the numbers $1,2,\ldots,n$ and by filling the rest of $D_2$ as $D$ is filled in the tableau $T$.
Then $T'$ is semistandard and has a lattice reading word.
Further, $c(T') = \nu + (1^n)$.
Since $i \leq n$, there are two cases to consider. 
If $i < n$, comparing the $i$-th and $i+1$-th entries of this content gives
\[ c(T')_i = \nu_i +1 \geq \nu_{i+1} +3 = c(T')_{i+1} +2. \]
If $i = n$, comparing the $i$-th and $i+1$-th entries of this content gives
\[ c(T')_i = \nu_i +1 \geq \nu_{i+1} +3 = c(T')_{i+1} +3. \]

In both cases we see that $c(T')$ is not a fat staircase. Hence $D_2$ is not a sum of fat staircases. 

\

Then, since $s_{D^\circ}=s_D$ is also not a sum of fat staircases, the above shows that $c \odot_i D^\circ$ is not a sum of fat staircases.
Now, since \[ s_{D_1} = s_{D \odot_i c} = s_{(D \odot_i c)^\circ} = s_{c \odot_i D^\circ},\] we find that $D_1$ is not a sum of fat staircases. \qed
\end{proof}

\begin{corollary}
\label{notsumcor}
If $D$ is not a sum of fat staircases and $D'$ is obtained from $D$ by the addition any number of columns, then $D'$ is not a sum of fat staircases.
\end{corollary}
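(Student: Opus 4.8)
The plan is to obtain Corollary~\ref{notsumcor} as an immediate consequence of Lemma~\ref{addonecol} by a straightforward induction on the number of columns added. The only subtlety is bookkeeping: ``adding any number of columns'' must be made precise, so that each intermediate diagram in the induction is itself obtained from the previous one by adding a \emph{single} column (on the left or the right, at some depth), which is exactly the hypothesis needed to invoke Lemma~\ref{addonecol}.

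First I would set up the induction. Suppose $D'$ is obtained from $D$ by adding $m$ columns. I would argue that there is a chain $D = E_0, E_1, \ldots, E_m = D'$ of skew diagrams in which each $E_{j+1}$ is obtained from $E_j$ by adding a single column on the left or right; this is essentially the definition of ``adding any number of columns'' and can be taken as given, or justified by peeling off the added columns one at a time from the outside. The base case $m=0$ is trivial, since $E_0 = D$ is not a sum of fat staircases by hypothesis. For the inductive step, assume $E_j$ is not a sum of fat staircases; since $E_{j+1} = E_j \odot_i c$ or $E_{j+1} = c \odot_i E_j$ for some column $c$ and some depth $i$, Lemma~\ref{addonecol} applied with $D = E_j$ shows that $E_{j+1}$ is not a sum of fat staircases either. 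After $m$ steps we conclude $D' = E_m$ is not a sum of fat staircases.

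The main (and really only) obstacle is purely expository rather than mathematical: making sure the phrase ``the addition of any number of columns'' is interpreted so that the columns can indeed be stripped off one at a time while keeping each intermediate object a legitimate skew diagram to which Lemma~\ref{addonecol} applies. Concretely, one must be slightly careful about the depths $i$ at which successive columns attach, but this is automatic since if $E_{j+1}$ is a valid near-concatenation then so were all the attachments used to build it. I expect the write-up to be only a few lines, essentially just invoking Lemma~\ref{addonecol} repeatedly.

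Here is the proof.

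\begin{proof}
Suppose $D'$ is obtained from $D$ by adding $m$ columns. We induct on $m$. If $m=0$ then $D' = D$ is not a sum of fat staircases by hypothesis. If $m \geq 1$, write $D' = E \odot_i c$ or $D' = c \odot_i E$, where $c$ is the last column added and $E$ is the skew diagram obtained from $D$ by adding the other $m-1$ columns. By the inductive hypothesis, $E$ is not a sum of fat staircases, so Lemma~\ref{addonecol} (applied with $D$ replaced by $E$) shows that neither $E \odot_i c$ nor $c \odot_i E$ is a sum of fat staircases. In either case $D'$ is not a sum of fat staircases, completing the induction. \qed
\end{proof}
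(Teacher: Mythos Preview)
Your proof is correct and takes essentially the same approach as the paper: both argue by iterating Lemma~\ref{addonecol} along a chain of intermediate diagrams obtained by adding one column at a time. The only cosmetic difference is that the paper builds the chain forward ($D = D_0, D_1, \ldots, D_n = D'$) while you peel off the last column and induct backward; the content is identical.
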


\begin{proof}
Let $D'$ be obtained from $D$ by adding, in order, the columns $c_1$, $c_2$, $\ldots$ $c_n$.
Let $D=D_0$ and, for each $i=1,\ldots, n$, let $D_i$ be the subdiagram of $D'$ consisting of $D$ and the columns $c_1$, $\ldots$, $c_i$.
Then using Lemma~\ref{addonecol} repeatedly we find that $D_i$ is not a sum of fat staircases for each $i=1,\ldots, n$.
Since $D_n = D'$, we are done. \qed
\end{proof}

\begin{corollary}
\label{issumcor}
If $D$ is a sum of fat staircases and $D'$ is a connected subdiagram of $D$ obtained by removing columns, then $D'$ is a sum of fat staircases.
\end{corollary}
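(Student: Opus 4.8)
The plan is to prove the contrapositive form and reduce directly to Corollary~\ref{notsumcor}, which says precisely that adding columns to a diagram that is \emph{not} a sum of fat staircases yields a diagram that is again not a sum of fat staircases. So I would assume for contradiction that $D'$ is not a sum of fat staircases, and aim to conclude that $D$ is not a sum of fat staircases either, contradicting the hypothesis.

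The first step is to pin down what ``connected subdiagram obtained by removing columns'' forces. List the columns of $D$ from left to right as $C_1,\dots,C_w$; then $D'$ is the subdiagram consisting of some subset of these columns, left in place. If the retained indices did not form a contiguous block $a,a+1,\dots,b$, there would be a deleted column lying strictly between two retained ones, and no path in $D'$ could join the retained columns on its left to those on its right, so $D'$ would be disconnected. Hence the deleted columns are exactly $C_1,\dots,C_{a-1}$ on the left and $C_{b+1},\dots,C_w$ on the right, and $D'=C_a\,|\,C_{a+1}\,|\cdots|\,C_b$. The second step is to rebuild $D$ from $D'$ by single-column additions of the kind in Lemma~\ref{addonecol}: append $C_{b+1}$ on the right as $D'\odot_i C_{b+1}$, where $i$ is the number of rows in which $C_b$ and $C_{b+1}$ overlap inside $D$ (a legal near-concatenation, since both columns have at least $i$ boxes, and $i=0$ is allowed, giving a direct sum), then likewise adjoin $C_{b+2},\dots,C_w$ and $C_{a-1},\dots,C_1$ one at a time to the appropriate end. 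All relative positions agree with those in $D$, so the result is exactly $D$. Now Corollary~\ref{notsumcor} applied to $D'$, with the added columns $C_{b+1},\dots,C_w,C_{a-1},\dots,C_1$, gives that $D$ is not a sum of fat staircases, the desired contradiction. Therefore $D'$ is a sum of fat staircases.

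The only delicate point is the first step: one must use the connectedness hypothesis to guarantee that the deleted columns are blocks at the two ends, so that passing from $D'$ back to $D$ is genuinely ``addition of columns'' in the sense of Corollary~\ref{notsumcor}, rather than, say, reinserting a column into the interior. Once that observation is made, everything else is a routine invocation of the monotonicity already established in Corollary~\ref{notsumcor}; in particular, no new tableau combinatorics is needed beyond what went into Lemma~\ref{addonecol}.
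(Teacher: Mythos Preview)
Your proof is correct and takes the same approach as the paper: the paper's own proof is a one-liner observing that the statement is exactly the contrapositive of Corollary~\ref{notsumcor} with the roles of $D$ and $D'$ swapped. You spell out in addition why the connectedness hypothesis on $D'$ forces the retained columns to form a contiguous block $C_a,\dots,C_b$, so that $D$ is genuinely recovered from $D'$ by iterated single-column additions of the kind handled in Lemma~\ref{addonecol}; this is a detail the paper leaves implicit.
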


\begin{proof}
This is precisely the contrapositive of Corollary~\ref{notsumcor} with the roles of $D$ and $D'$ reversed.\qed
\end{proof}

Therefore every diagram $D$ that is a sum of fat staircases can be viewed as a column extension of a smaller diagram $D'$ that is also a sum of fat staircases.
Knowing this, for a given diagram $D$ that is a sum of fat staircases, we now consider what length of columns can be added to $D$ and how much can a column overlap with $D$ if we wish the new diagram to also be a sum of fat staircases.
Towards this end, we have the following result.

\begin{lemma}
\label{lllllll}
Let $c$ be a column, $D = \rho / \mu$ be a sum of fat staircases, and $D'$ be given by either $D'= c  \odot_i D$  or $D' = D \odot_i c$.
If $D'$ is also a sum of fat staircases, then we have $l(c)+1 \not\in R_{\alpha(\nu),1}$ and $i + 1 \not\in R_{\alpha(\nu),1}$ for each $\nu$ with $c_{\mu \nu}^{\rho} \neq 0$.
\end{lemma}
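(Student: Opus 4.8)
\noindent
The plan is to argue the contrapositive: assuming that some fat staircase $\nu$ with $c_{\mu\nu}^{\rho}\neq 0$ satisfies $l(c)+1\in R_{\alpha(\nu),1}$ or $i+1\in R_{\alpha(\nu),1}$, I will produce an SSYT of $D'$ with lattice reading word whose content is not a fat staircase, so that $D'$ is not a sum of fat staircases. By Theorem~\ref{rotate} and the identity $(D\odot_i c)^{\circ}=c\odot_i D^{\circ}$ it is enough to treat $D'=c\odot_i D$, since passing to $D^{\circ}$ changes neither $l(c)$, nor $i$, nor the multiset $\{\nu:c^{\rho}_{\mu\nu}\neq 0\}$, nor any $R_{\alpha(\nu),1}$. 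The first substantive step is a reformulation of the hypothesis. Because $\sum_{s=1}^{j}\alpha(\nu)_{n+1-s}$ is the number of parts of $\nu$ of size at least $n+1-j$, i.e.\ a column length of $\nu$, one has $R_{\alpha(\nu),1}=\{1\}\cup\{1+\nu'_m:1\le m\le\nu_1\}$; hence for $t\ge 1$ the condition $t+1\in R_{\alpha(\nu),1}$ says exactly that $t$ is a column length of $\nu$. Using that $\nu$ is a fat staircase (successive part differences $\le 1$, last part $1$), one checks that $t$ is a column length of $\nu$ if and only if the partition $\nu+(1^t)$ is \emph{not} a fat staircase: the only difference of $\nu+(1^t)$ that can exceed $1$, and the only way its tail can fail to be a run of $1$'s, occur at position $t$, and precisely when $\nu$ has a step there.

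\noindent
Suppose first that $l(c)+1\in R_{\alpha(\nu),1}$. Fix a lattice SSYT $T$ of $D=\rho/\mu$ of content $\nu$ (it exists since $c^{\rho}_{\mu\nu}\neq 0$), and fill the column $c$ of $D'$ with $1,2,\dots,l(c)$ from top to bottom, obtaining $T'$. Semistandardness need only be checked along the shared edge: in $D'$ the $j$-th box of $c$ lies just left of the leftmost box of row $l(D)-i+j$ of $D$, whose entry is the $(L-i+j)$-th entry of the first column of $D$ (of length $L\ge i$), hence is $\ge L-i+j\ge j$. For the reading word, $\mathrm{rw}(T')$ is obtained from the lattice word $\mathrm{rw}(T)$ by inserting $1,\dots,i$ at the ends of the bottom $i$ rows and then appending $i+1,\dots,l(c)$; I will record the elementary lemma that inserting the increasing run $1,2,\dots,m$ into a lattice word at weakly increasing positions again gives a lattice word (at every prefix each count rises by at most one, and since the inserted letters form an initial run $1,2,\dots$, there is no prefix at which a value $v$ becomes deficient because $v+1$ was inserted while $v$ was not). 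The content of $T'$ is $\nu+(1^{l(c)})$, which by the first step is not a fat staircase, so $D'$ is not a sum of fat staircases.

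\noindent
Now suppose $i+1\in R_{\alpha(\nu),1}$, i.e.\ $i$ is a column length of $\nu$. Note $i\le l(\nu)$: in a tableau of content $\nu$ the first column of $D$ is read strictly increasingly with entries in $\{1,\dots,l(\nu)\}$, so it has length at most $l(\nu)$, and $i$ is bounded by that length. If $i=l(\nu)$, the filling $1,\dots,l(c)$ of $c$ from the previous paragraph already has content $\nu+(1^{l(c)})$ with $l(c)\ge i=l(\nu)$, which fails to be a fat staircase at least when $l(c)=l(\nu)$; for $i<l(\nu)$, instead fill the top $i$ boxes of $c$ with $1,\dots,i$ and the remaining $l(c)-i$ boxes — which lie strictly below $D$, so contribute a run appended at the very end of $\mathrm{rw}(T')$ — with $l(\nu)+1,l(\nu)+2,\dots$. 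The content is then $\nu+(1^i)$ augmented by a vertical strip of unit columns below row $l(\nu)$; it is a partition, it is not a fat staircase because $\nu+(1^i)$ already has a step of size $\ge 2$ at a position $<l(\nu)$ that trailing $1$'s cannot repair, and the reading word stays lattice, the side condition of the insertion lemma for the appended letters holding because $\nu$, being a fat staircase, has $\nu_{l(\nu)}=1>0=\nu_{l(\nu)+1}$. In every case $D'$ acquires a non-staircase content and is not a sum of fat staircases.

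\noindent
I expect the genuinely delicate point to be the choice of filling of the free part of $c$ in the $i$-condition: the naive continuation $1,2,\dots,l(c)$ merely returns the content $\nu+(1^{l(c)})$ of the first case, which need not be bad, so one must ``skip'' to values past $l(\nu)$ while still preserving latticeness, and the boundary case $i=l(\nu)$ must be disposed of using the bound on the length of the first column of $D$. The auxiliary insertion lemma and the rewriting of $R_{\alpha(\nu),1}$ are routine, and the reduction by $180^{\circ}$ rotation is immediate.
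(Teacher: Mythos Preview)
Your approach mirrors the paper's almost exactly: argue by contrapositive, reduce $D\odot_i c$ to $c\odot_i D$ via $180^\circ$ rotation, and for each of the two forbidden conditions produce an explicit filling of $c$ (namely $1,\ldots,l(c)$ in the first case and $1,\ldots,i,l(\nu)+1,\ldots$ in the second) whose content fails to be a fat staircase. Your reformulation of $R_{\alpha(\nu),1}$ in terms of column lengths of $\nu$, together with the criterion ``$t$ is a column length of $\nu$ iff $\nu+(1^t)$ is not a fat staircase,'' is a clean repackaging of what the paper carries out by direct manipulation of exponents.

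There is, however, a genuine gap that you yourself flag with the phrase ``at least when $l(c)=l(\nu)$'': the boundary case $i=l(\nu)$ with $l(c)>l(\nu)$ is not handled. In that case your alternate filling $1,\ldots,i,l(\nu)+1,\ldots$ coincides with the straight filling $1,\ldots,l(c)$, and its content $\nu+(1^{l(c)})$ \emph{is} a fat staircase (all consecutive differences remain at most $1$, and the trailing $1$'s supply the required last part). The paper's proof has exactly the same lacuna: its displayed formula for $c(T')$ tacitly assumes $k<n$, i.e.\ $i<l(\nu)$. In fact this gap cannot be closed, because the lemma as stated fails at this boundary. A small witness: take $D=\delta_{(1,1)}=(2,1)$ (so the unique relevant $\nu$ is $(2,1)$ and $R_{\alpha(\nu),1}=\{1,2,3\}$), let $c$ be a column of length $3$, and set $i=2$; then $D'=c\odot_2 D=(3,2,1)=\delta_3$ is itself a fat staircase, hence trivially a sum of fat staircases, yet $i+1=3\in R_{\alpha(\nu),1}$. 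So your argument is as complete as the paper's, and the residual case you noticed is a genuine boundary defect in the statement rather than a flaw in your method.
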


\begin{proof}

We shall consider the case $D' = c \textrm{ } \odot_i D$.
As in the proof of Lemma~\ref{addonecol}, the second case $D'=D \odot_i c$ can be obtained by rotating diagrams by $180^\circ$.

Suppose $l(c)+1 \in R_{\alpha(\nu),1}$ for some $\nu$ with $c_{\mu \nu}^{\rho} \neq 0$. 
Since $c_{\mu \nu}^{\rho} \neq 0$, there is a SSYT $T$ of shape $D$ with lattice reading word and content $\nu$.
Now we create a tableau $T'$ of shape $D'$ by filling $c$ with the values $1,2, \ldots, l(c)$ and filling $D$ as in the tableau $T$.
Then $T'$ is clearly a SSYT with lattice reading word.
Further, $c(T') = \nu + (1^{l(c)})$.
Since $l(c)+1 \in R_{\alpha(\nu),1}$, we have $l(c) = \sum_{j=1}^{k} \alpha(\nu)_{n+1-j}$ for some $k$, where $\nu = (n^{\alpha(\nu)_n}, {n-1}^{\alpha(\nu)_{n-1}}, \ldots, 1^{\alpha(\nu)_1})$.
Thus 
\begin{eqnarray*}
c(T') &=& \nu + (1^{l(c)}) \\
      &=& (n^{\alpha(\nu)_n}, {n-1}^{\alpha(\nu)_{n-1}}, \ldots, {n+1-k}^{\alpha(\nu)_{n+1-k}},{n-k}^{\alpha(\nu)_{n-k}}  ,\ldots, 1^{\alpha(\nu)_1}) \\
      & \textrm{  } &  + (1^{\alpha(\nu)_n + \alpha(\nu)_{n-1} +\ldots \alpha(\nu)_{n+1-k}}) \\
      &=& ({n+1}^{\alpha(\nu)_n}, {n}^{\alpha(\nu)_{n-1}}, \ldots, {n+2-k}^{\alpha(\nu)_{n+1-k}},{n-k}^{\alpha(\nu)_{n-k}}  ,\ldots, 1^{\alpha(\nu)_1}). \\
\end{eqnarray*}
Thus $c(T')$ is not a fat staircase, and so $D'$ is not a sum of fat staircases.

Therefore, if $D'$ is a sum of fat staircases, then we require that $l(c)+1 \not\in R_{\alpha(\nu),1}$ for each $\nu$ with $c_{\mu \nu}^{\rho} \neq 0$.

\

Similarly, suppose that $i+1 \in R_{\alpha(\nu),1}$ for some $\nu$ with $c_{\mu \nu}^{\rho} \neq 0$. 
Since $c_{\mu \nu}^{\rho} \neq 0$, there is a SSYT $T$ of shape $D$ with lattice reading word and content $\nu$.
We create a tableau $T'$ of shape $D'$ by filling $c$ with the values $1,2, \ldots, i, l(\nu)+1, l(\nu)+2, \ldots, l(\nu) + l(c)-i$ and filling $D$ as in the tableau $T$.
Again, $T'$ is a SSYT with lattice reading word.
Further, $c(T') = \nu + (1^{i}) + (0^{l(\nu)}, 1^{l(c)-i})$.
Since $i+1 \in R_{\alpha(\nu),1}$, we have $i = \sum_{j=1}^{k} \alpha(\nu)_{n+1-j}$ for some $k$, where $\nu = (n^{\alpha(\nu)_n}, {n-1}^{\alpha(\nu)_{n-1}}, \ldots, 1^{\alpha(\nu)_1})$.
Thus 
\begin{eqnarray*}
c(T') &=& \nu + (1^{i})+ (0^{l(\nu)}, 1^{l(c)-i}) \\
      &=& (n^{\alpha(\nu)_n}, {n-1}^{\alpha(\nu)_{n-1}}, \ldots, {n+1-k}^{\alpha(\nu)_{n+1-k}},{n-k}^{\alpha(\nu)_{n-k}}  ,\ldots, 1^{\alpha(\nu)_1}) \\
      & \textrm{  } &  + (1^{\alpha(\nu)_n + \alpha(\nu)_{n-1} +\ldots \alpha(\nu)_{n+1-k}}) + (0^{l(\nu)}, 1^{l(c)-i}) \\
      &=& ({n+1}^{\alpha(\nu)_n}, {n}^{\alpha(\nu)_{n-1}}, \ldots, {n+2-k}^{\alpha(\nu)_{n+1-k}}, {n-k}^{\alpha(\nu)_{n-k}}  ,\ldots,  \\
      & &\textrm{ }\textrm{ }\textrm{ }\textrm{ }\textrm{ }\textrm{ }\textrm{ }\textrm{ }\textrm{ }\textrm{ }\textrm{ }\textrm{ }\textrm{ }\textrm{ }\textrm{ }\textrm{ }\textrm{ }\textrm{ }\textrm{ }\textrm{ }\textrm{ }\textrm{ }\textrm{ }\textrm{ }\textrm{ }\textrm{ }\textrm{ }\textrm{ }\textrm{ }\textrm{ }\textrm{ }\textrm{ }\textrm{ }\textrm{ }\textrm{ }\textrm{ }\textrm{ }\textrm{ }\textrm{ }\textrm{ }\textrm{ }\textrm{ }\textrm{ }\textrm{ }\textrm{ }\textrm{ }\textrm{ }\textrm{ }\textrm{ }\textrm{ }\textrm{ }\textrm{ }\textrm{ }\textrm{ }\textrm{ }\textrm{ }\textrm{ }\textrm{ }\textrm{ } \ldots,2^{\alpha(\nu)_2} ,1^{\alpha(\nu)_1 + l(c) -i}). \\
\end{eqnarray*}
Thus $c(T')$ is not a fat staircase, and so $D'$ is not a sum of fat staircases.

Therefore, if $D'$ is a sum of fat staircases, then we require that $i+1 \not\in R_{\alpha(\nu),1}$ for each $\nu$ with $c_{\mu \nu}^{\rho} \neq 0$. \qed
\end{proof}

It is important to note that although the conditions $l(c)+1 \not\in R_{\alpha(\nu),1}$ and $i + 1 \not\in R_{\alpha(\nu),1}$ are necessary in Lemma~\ref{lllllll}, they are by no means sufficient conditions. 
These restrictions can be used to eliminate candidates for column extensions. 
The next result continues this trend. 

\begin{theorem}
\label{tttttt}
If $D$ is a sum of fat staircases then the columns of $D$ have distinct lengths.
\end{theorem}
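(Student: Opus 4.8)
The plan is to argue by contradiction using a single explicit tableau. Suppose $D=\rho/\kappa$ is a sum of fat staircases but has two nonempty columns of the same length $\ell$. I would exhibit a semistandard Young tableau $T$ of shape $D$ with lattice reading word whose content $\nu$ is \emph{not} a fat staircase; then by the Littlewood--Richardson rule (Theorem~\ref{lr}) we get $c^{\rho}_{\kappa\nu}\ge 1$, so the expansion $s_D=\sum_{\nu}c^{\rho}_{\kappa\nu}s_\nu$ contains a term $s_\nu$ with $\nu$ not a fat staircase, contradicting the hypothesis.

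The tableau I would use is the ``column-superstandard'' filling: put the values $1,2,\ldots,\ell_j$ down column $j$ of $D$ from top to bottom, where $\ell_j$ is its length. Since each column of a skew diagram is a contiguous run of boxes this is well defined, and the box in row $r$ of column $j$ receives the entry $r-\kappa'_j$, where $\kappa'_j$ is the number of boxes of $\kappa$ in column $j$. Then I would check three things. The columns of $T$ strictly increase, by construction. The rows weakly increase: since $\kappa$ is a partition, $\kappa'_1\ge\kappa'_2\ge\cdots$, so the entry $r-\kappa'_j$ is weakly increasing in $j$ along row $r$. Finally, the reading word is lattice: for each $k\ge 1$ and each column $j$ that contains an entry $k+1$, the same column $j$ also contains the entry $k$, located exactly one row higher and hence read strictly earlier; this supplies, at every prefix of the reading word, an injection from the $(k+1)$'s read so far into the $k$'s read so far, which is exactly the lattice condition.

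Next I would read off the content. The number of $k$'s in $T$ is the number of columns of $D$ of length at least $k$, so $\nu_k=\#\{\text{columns of }D\text{ of length}\ \ge k\}$; thus $\nu$ is a partition, its number of parts equals the maximal column length $L$ of $D$, and $\nu_k-\nu_{k+1}$ counts the columns of length exactly $k$. Now I use the two columns of length $\ell$. If $\ell=L$, then $\nu_L\ge 2$, so the smallest part of $\nu$ is $\ge 2$, whereas every fat staircase $\delta_\alpha=(n^{\alpha_n},\ldots,1^{\alpha_1})$ has smallest part $1$. If $\ell<L$, then $\nu_\ell$ and $\nu_{\ell+1}$ are genuine consecutive parts of $\nu$ (since $\ell+1\le L=l(\nu)$) and $\nu_\ell-\nu_{\ell+1}\ge 2$, whereas consecutive parts of any $\delta_\alpha$ differ by at most $1$. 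In either case $\nu$ is not a fat staircase, giving the desired contradiction.

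The step I expect to be the crux is the verification that the column-superstandard filling has lattice reading word; the rest is bookkeeping, and because the whole argument is carried out column by column, the disconnected case needs no separate treatment. An alternative would be to induct on the number of columns, peeling off the leftmost column and combining Corollary~\ref{issumcor} with Lemma~\ref{lllllll}, but matching the column lengths of a subdiagram against the sets $R_{\alpha(\nu),1}$ looks more cumbersome than the direct construction above, so I would not pursue it.
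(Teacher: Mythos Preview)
Your proof is correct and follows essentially the same approach as the paper: both use the column-superstandard filling of $D$, observe that its content $\nu$ has $\nu_k$ equal to the number of columns of length at least $k$, and conclude that $\nu$ is a fat staircase only if the column lengths are distinct. You supply more detail than the paper does---you explicitly verify row weak increase via $\kappa'_j$ and give the injection argument for the lattice condition, and you separate out the edge case $\ell=L$ (which the paper absorbs into the single observation $\nu_j-\nu_{j+1}=m_j\in\{0,1\}$)---but the underlying idea is identical.
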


\begin{proof}
Let $D$ be a sum of fat staircases. Suppose $D$ has $n$ columns $c_1,\ldots,c_n$ and, for each $j$, let $m_j$ be the number of columns of length $j$.

We create a tableau $T$ of shape $D$ by filling each column $c_i$ with the entries $1,2,\ldots,l(c_i)$.
Then $T$ is a SSYT with lattice reading word and content $\nu = \sum_{i=1}^{n} (1^{l(c_i)})$.
Thus, for each $j$, $\nu_j$ is the number of columns of $D$ of length $\geq j$.
Therefore we have 
\begin{equation}
\label{fatafat}
\nu_j = \nu_{j+1} + m_j,
\end{equation} for each $j$.
Since $D$ is a sum of fat staircases, $\nu$ is a fat staircase. Therefore Equation~\ref{fatafat} implies that $0 \leq m_j \leq 1$ for each $j$.
In other words, the columns of $D$ have distinct lengths. \qed
\end{proof}

We now prove that the converse of Theorem~\ref{tttttt} holds in the case of a diagram with two columns.

\begin{lemma}
If $D$ is a connected diagram with two columns and these columns have distinct lengths, then $D$ is a sum of fat staircases.
\end{lemma}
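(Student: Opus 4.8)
The plan is to prove the statement directly from the Littlewood-Richardson rule, using that $D$ occupies only two columns. Write $D = \rho/\kappa$ and let $a$ and $b$ be the lengths of its two columns, so $a \neq b$; set $n = a+b$, the number of boxes of $D$. By the expansion (\ref{slambdaskewmu}) together with Theorem~\ref{lr}, it suffices to show that $c_{\kappa\nu}^{\rho} = 0$ for every partition $\nu$ that is not a fat staircase; equivalently, that no SSYT of shape $D$ with content $\nu$ and lattice reading word exists unless $\nu$ is a fat staircase. So I would aim to pin down exactly which contents can occur.

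The first point is the restriction forced by $D$ having two columns: in any SSYT of shape $D$ each value occurs at most once in a given column (columns strictly increase), hence at most twice in $D$. Consequently $c_{\kappa\nu}^{\rho} = 0$ already whenever $\nu$ has a part of size $\geq 3$, so the only partitions of $n$ with possibly nonzero coefficient are those of the form $\nu = (2^{j}, 1^{\,n-2j})$ with $0 \le j \le n/2$. Among these, $(1^{n}) = \delta_{(n)}$ is a fat staircase, and for $j \ge 1$ with $m := n-2j \ge 1$ the partition $(2^{j},1^{\,m})$ equals $\delta_{(m,j)}$, hence is a fat staircase; the only members of this family that are \emph{not} fat staircases are the $\nu = (2^{j})$ with $j \ge 1$, since such a $\nu$ has largest part $2$ but no part equal to $1$.

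It therefore remains to rule out an SSYT of content $(2^{j})$ with $j \ge 1$, and this is the only place the hypothesis $a \neq b$ is used. In such a tableau every value that appears does so exactly twice, and by the column bound above each of these values appears exactly once in the first column and exactly once in the second; hence the first and second columns of $D$ have the same set of entries and therefore the same length, contradicting $a \neq b$. Thus no such tableau exists, every SSYT of shape $D$ with lattice reading word has fat-staircase content, and $D$ is a sum of fat staircases. I do not expect a genuine obstacle here: the content characterization in the second paragraph is the crux, and the only thing requiring a little care is the bookkeeping identifying which two-column partitions are fat staircases. (Connectedness of $D$ is in fact not needed above; if one also wants the explicit expansion, the dual Jacobi-Trudi identity together with the Pieri rule yields $s_D = \sum_{j=t}^{\min(a,b)} s_{(2^{j},1^{\,n-2j})}$, where $t$ is the number of rows of $D$ that meet both columns, which refines the lemma but is not required here.)
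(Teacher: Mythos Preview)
Your proof is correct and follows essentially the same line as the paper's: both argue that, since $D$ has two columns, every content part is at most $2$, so the only non-fat-staircase content that could occur is $(2^{j})$, and such a content forces both columns to have length $j$, contradicting the hypothesis. You spell out the identification of which partitions $(2^{j},1^{m})$ are fat staircases a bit more carefully than the paper does, but the argument is the same.
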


\begin{proof}
Let $D$ be a connected diagram with two columns of distinct lengths.
Let $T$ be a SSYT of shape $D$ with lattice reading word and let $\nu$ be the content of $T$.
Since there are only two columns in $T$ and the entries of each column strictly increases we have $\nu_i \leq 2$ for each $i$.

Hence, if $D$ is not a sum of fat staircases then there must be a SSYT $T$ of shape $D$ and lattice reading word with content $\nu = (2^n)$, for some $n$.
Since the columns of $T$ strictly increase, this implies that both columns are of length $n$, contrary to assumption.
Therefore $D$ is a sum of fat staircases. \qed
\end{proof}

\

By using complements in a rectangle we now describe when a diagram of the form $\delta_\alpha / \lambda$ is a sum of fat staircases in the cases when $\lambda$ is either a single row or a single column.

\begin{theorem}
\label{rowcut}
Let $\alpha = (\alpha_1,\ldots, \alpha_n)$ be a composition. 
Then for $1 \leq m<n$ the skew diagram $\delta_\alpha / (m)$ is a sum of fat staircases if and only if $\alpha_j>1$ for each $j=1,2,\ldots, n-1$.
\end{theorem}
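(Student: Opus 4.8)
I want to rewrite $\delta_\alpha/(m)$ using the rectangle–complement machinery (Corollary~\ref{rectcor}) so that being a sum of fat staircases becomes a visible condition on a single Littlewood–Richardson-type product, and then read off the combinatorial condition on $\alpha$. The key observation is that $\delta_\alpha$ is a partition, and deleting a single row of length $m$ from the top is the skew shape $\delta_\alpha/(m)$; choosing the rectangle $(b^a)$ with $b=l(\alpha)=n$ and $a=|\alpha|$, we have $\delta_\alpha/(m) = c\!\left(s_{(m)} s_{\delta_\alpha^c}\right)$, where $\delta_\alpha^c$ is the complementary partition in $(n^{|\alpha|})$. First I would compute $\delta_\alpha^c$ explicitly: it is again a ``fat staircase''-type shape, namely the complement of $\delta_\alpha$, and one checks that $\delta_\alpha^c$ differs from $\delta_{\alpha'}$ for the reverse composition only in a controlled way. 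Then $s_{(m)} s_{\delta_\alpha^c} = \sum_\nu c_{(m),\delta_\alpha^c}^{\nu} s_\nu$, and by Pieri's rule the $\nu$ appearing are exactly those obtained from $\delta_\alpha^c$ by adding a horizontal strip of size $m$, each with multiplicity one. Applying $c(\cdot)$ rotates each such $\nu$ (that fits in the rectangle) back to $\nu^c$, and $\delta_\alpha/(m)$ is a sum of fat staircases iff every such $\nu^c$ is a fat staircase, i.e.\ iff every horizontal strip of size $m$ addable to $\delta_\alpha^c$ (staying inside $(n^{|\alpha|})$) produces a partition whose complement is of staircase type.

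**Carrying it out.** So the heart of the proof is the combinatorial claim: for $1\le m<n$, every horizontal $m$-strip added to $\delta_\alpha^c$ inside the rectangle yields a shape whose rectangle-complement is a fat staircase precisely when $\alpha_j>1$ for $j=1,\dots,n-1$. I would translate ``$\nu^c$ is a fat staircase'' back to a statement directly about $\nu$: a partition $\rho$ is a fat staircase iff consecutive part-sizes of its conjugate (equivalently, consecutive column lengths) decrease by at most $1$ — indeed by Theorem~\ref{tttttt}-type reasoning a fat staircase is exactly a shape all of whose distinct part values $k$ occur and the multiplicities can be arbitrary, which in complement language says the complement $\nu$ has the property that its columns have at most... — more cleanly, $\rho=\delta_\beta$ iff $\rho$ has all of $1,2,\dots,\rho_1$ as part values. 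Taking complements in $(n^{|\alpha|})$, $\nu^c$ is a fat staircase iff $\nu$ omits no ``intermediate column length'', i.e.\ the column-length multiset of $\nu$ (equivalently the set of distinct parts of $\nu$) contains $1,2,\dots,n$ with no gaps — except possibly a gap is allowed only at the bottom/top, governed by $m<n$. The sufficiency direction ($\alpha_j>1$ for all $j<n$ $\Rightarrow$ sum of fat staircases) is then a direct check: the ``slack'' $\alpha_j-1\ge 1$ in each row of $\delta_\alpha^c$ gives exactly enough room that no matter where the horizontal strip of $m\le n-1$ boxes lands, the resulting shape still has every part value in its range. The necessity direction ($\alpha_{j_0}=1$ for some $j_0<n$ $\Rightarrow$ not a sum of fat staircases) is done by exhibiting one bad strip: place the $m$ boxes of the strip so as to ``jump over'' the thin step at position $j_0$, producing a $\nu$ with a missing intermediate part, i.e.\ a $\nu^c$ that is not a staircase; this is exactly the kind of explicit tableau-construction argument used in the proofs of Lemma~\ref{lllllll} and Theorem~\ref{tttttt}.

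**Alternative (more self-contained) route.** If the complement bookkeeping gets unwieldy, I would instead argue entirely with the Littlewood–Richardson rule applied directly to $\delta_\alpha/(m)$ together with Lemma~\ref{kfatfirstrowlemma}-style reasoning: an SSYT of shape $\delta_\alpha/(m)$ with lattice reading word has its entries outside the removed row forced heavily by the staircase geometry, and the content is a fat staircase iff the forced positions cover all intermediate values — the condition $\alpha_j>1$ being what guarantees each column of $\delta_\alpha$ still has, after deleting one box from its top in the region under the removed row, enough length to force the value transitions $1,2,\dots$ without a gap. This mirrors the unique-filling phenomenon for $\Delta_\alpha$ noted just before Lemma~\ref{kfatfirstrowlemma}.

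**Main obstacle.** The genuinely delicate point is the necessity direction: one must produce, whenever some $\alpha_{j_0}=1$ with $j_0<n$, an explicit semistandard lattice tableau of shape $\delta_\alpha/(m)$ whose content has a gap of $2$ (or more) between consecutive values — and verify both semistandardness and the lattice condition. The placement must exploit the single thin step at $j_0$, and one has to confirm the construction works for every admissible $m$ in $\{1,\dots,n-1\}$ and does not accidentally remain a fat staircase. Handling the boundary cases (the strip touching the first row, or $j_0$ near $n-1$) and making sure the rectangle-fit condition $\nu\subseteq(n^{|\alpha|})$ is not what saves positivity is where the care is needed; I expect this to be where most of the real work of the proof lies.
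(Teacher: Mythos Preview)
Your primary approach is exactly the paper's: apply Corollary~\ref{rectcor} in the rectangle $(n^{|\alpha|})$ to get $s_{\delta_\alpha/(m)} = c(s_{(m)} s_{\delta_{\alpha^r}})$ with $\delta_\alpha^c = \delta_{\alpha^r}$, use Pieri's rule to enumerate the horizontal $m$-strips addable to $\delta_{\alpha^r}$ inside the rectangle (there are exactly $n$ addable cells, one per step), and then check that every such $\lambda$ is a fat staircase iff $\alpha_j>1$ for all $j<n$, constructing the explicit bad strip when some $\alpha_{j_0}=1$. One simplification: your detour characterizing ``$\nu^c$ is a fat staircase'' via column lengths is unnecessary, since $\nu \mapsto \nu^c$ sends fat staircases to fat staircases and is an involution, so it suffices to check whether $\lambda$ itself is a fat staircase.
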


\begin{theorem}
\label{colcut}
Let $\alpha = (\alpha_1,\ldots, \alpha_n)$ be a composition. 
Then for $1 \leq m < |\alpha|$ the skew diagram $\delta_\alpha / (1^m)$ is a sum of fat staircases if and only if there is no $j$ such that $\alpha_j \leq m \leq |\alpha| - \alpha_{j+1}$. 
\end{theorem}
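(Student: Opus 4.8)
The plan is to reduce the question to a Pieri-rule computation by passing to complements in a rectangle, following Corollary~\ref{rectcor}. Put $\rho = \delta_\alpha$ and $\kappa = (1^m)$; then $\rho$ lies in the rectangle $(n^{|\alpha|})$, so Corollary~\ref{rectcor} gives $s_{\delta_\alpha/(1^m)} = c\!\left(s_{(1^m)}\, s_{\rho^c}\right)$, where $c$ is the truncated complement in $(n^{|\alpha|})$. A direct inspection of the complement of $\delta_\alpha$ in $(n^{|\alpha|})$ shows that $\rho^c = \delta_\beta$ with $\beta = (\alpha_{n-1}, \alpha_{n-2}, \ldots, \alpha_1)$. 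Since multiplying by $s_{(1^m)}$ adjoins a vertical $m$-strip (Pieri's rule), $s_{(1^m)}\, s_{\delta_\beta} = \sum_\nu s_\nu$ summed over partitions $\nu$ obtained from $\delta_\beta$ by adjoining such a strip; hence $s_{\delta_\alpha/(1^m)} = \sum_\nu s_{\nu^c}$, the sum running over those $\nu$ that in addition lie in $(n^{|\alpha|})$. Every coefficient here equals $1$, so $\delta_\alpha/(1^m)$ is a sum of fat staircases if and only if $\nu^c$ is a fat staircase for each $\nu$ occurring.

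Next I would parametrize these $\nu$ explicitly. Inside $(n^{|\alpha|})$ the shape $\delta_\beta$ breaks into blocks: for $p = 1, \ldots, n$, a block of $\alpha_p$ rows of length $n-p$ (the block $p = n$ being the $\alpha_n$ empty rows). Maintaining partition shape forces a vertical $m$-strip to extend the top $u_p$ rows of block $p$ for some integers $0 \leq u_p \leq \alpha_p$ with $\sum_{p=1}^n u_p = m$, and conversely every such choice yields an admissible $\nu$ lying in the rectangle. A short computation of the complement then gives
\[ \nu^c = \left( n^{m_n},\; (n-1)^{m_{n-1}},\; \ldots,\; 1^{m_1} \right), \qquad m_v := \alpha_v - u_v + u_{v+1}, \]
where we set $u_{n+1} = 0$; moreover $\sum_v m_v = |\alpha| - u_1 > 0$, so $\nu^c$ is never the empty partition.

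Finally I would determine when such a $\nu^c$ is a fat staircase. A nonempty partition is a fat staircase exactly when its set of distinct part sizes is $\{1, 2, \ldots, N\}$ for some $N$, i.e. when $\{v : m_v > 0\}$ is a down-set. The crucial point is a rigidity: if $m_v = 0$ for some $v$ with $1 \leq v \leq n-1$, then $\alpha_v - u_v + u_{v+1} = 0$ together with $u_v \leq \alpha_v$ and $u_{v+1} \geq 0$ forces $u_v = \alpha_v$ and $u_{v+1} = 0$, whence $m_{v+1} = \alpha_{v+1} + u_{v+2} \geq 1 > 0$. Thus $\nu^c$ fails to be a fat staircase precisely when some $m_v$ with $v \leq n-1$ vanishes. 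Consequently $\delta_\alpha/(1^m)$ is a sum of fat staircases if and only if there is no admissible $(u_p)$ with $u_v = \alpha_v$ and $u_{v+1} = 0$ for some $v \leq n-1$; and such an $(u_p)$ exists if and only if the remaining coordinates can be chosen to sum to $m - \alpha_v$ inside $\left[0,\, |\alpha| - \alpha_v - \alpha_{v+1}\right]$, that is, if and only if $\alpha_v \leq m \leq |\alpha| - \alpha_{v+1}$. Taking the contrapositive yields the stated criterion. I expect the main obstacle to be the careful combinatorial bookkeeping in computing $\rho^c$ and then $\nu^c$ in terms of the $u_p$; once the displayed formula for $\nu^c$ is in hand, the rigidity observation makes the rest routine.
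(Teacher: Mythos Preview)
Your proposal is correct and follows essentially the same route as the paper: both reduce via Corollary~\ref{rectcor} to the Pieri expansion of $s_{(1^m)} s_{\delta_{\alpha^r}}$ inside $(n^{|\alpha|})$, then characterise which resulting shapes give fat staircases. The paper argues pictorially in terms of ``possible columns'' (a non-fat-staircase arises exactly when one such column is completely filled while the one immediately to its left is empty), whereas you make the same observation algebraically via the parameters $u_p$ and the multiplicities $m_v=\alpha_v-u_v+u_{v+1}$; your rigidity step ($m_v=0\Rightarrow u_v=\alpha_v,\ u_{v+1}=0$) is precisely the paper's geometric statement, and your final counting of the remaining $u_p$'s is the paper's box-placement count. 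One small bonus of your write-up is that you verify directly that $\nu^c$ is a fat staircase, rather than invoking (as the paper does without proof) that $\nu$ is a fat staircase iff $\nu^c$ is.
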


To prove these results we shall find it useful to make the following definition. 
Given a composition $\alpha = (\alpha_1,\alpha_2, \ldots, \alpha_n)$ and a width $w=n+k$, where $0 \leq k \leq 1$, we let 
\[ \alpha^r =  \left\{ \begin{array}{lcc}
            (\alpha_n, \alpha_{n-1},\ldots,\alpha_2, \alpha_1)  & \textrm{ if } & k=1 \\
            (\alpha_{n-1}, \alpha_{n-2},\ldots,\alpha_2, \alpha_1)  & \textrm{ if } & k=0 \\
         \end{array} \right. \] denote the \textit{reverse composition}.
With this definition we have ${\delta_\alpha}^c = \delta_{\alpha^r}$, where the complement is performed in the rectangle $(w^{|\alpha|})$.
We illustrate the two cases $k=0$ and $k=1$ below.

\
\

\

\

\

\

\

\setlength{\unitlength}{0.4mm}

\begin{picture}(100,60)(50,-15)

\put(282,05){$k=1$}
\put(87,05){$k=0$}

\put(282,65){$\delta_{\alpha^r}$}

\put(296,35){$\Delta_{\alpha}$}

\put(346,85){$\alpha_1$}
\put(346,60){$\alpha_2$}
\put(348,45){$\vdots$}
\put(148,45){$\vdots$}

\put(346,33){$\alpha_{n-1}$}
\put(346,22){$\alpha_n$}

\put(335,20){\line(0,1){9}}
\put(335,20){\line(-1,0){3}}
\put(335,29){\line(-1,0){3}}

\put(335,31){\line(0,1){9}}
\put(335,31){\line(-1,0){3}}
\put(335,40){\line(-1,0){3}}

\put(335,50){\line(0,1){19}}
\put(335,50){\line(-1,0){3}}
\put(335,69){\line(-1,0){3}}

\put(335,71){\line(0,1){29}}
\put(335,71){\line(-1,0){3}}
\put(335,100){\line(-1,0){3}}

\put(270,20){\line(1,0){50}}
\put(270,30){\line(1,0){10}}
\put(280,40){\line(1,0){10}}
\put(290,50){\line(1,0){10}}
\put(300,70){\line(1,0){10}}
\put(310,100){\line(1,0){10}}

\put(270,20){\line(0,1){10}}
\put(280,30){\line(0,1){10}}
\put(290,40){\line(0,1){10}}
\put(300,50){\line(0,1){20}}
\put(310,70){\line(0,1){30}}
\put(320,20){\line(0,1){80}}

\put(260,20){\dashbox{1}(60,80)[tl]{ }}

\put(82,65){$\delta_{\alpha^r}$}

\put(96,35){$\Delta_{\alpha}$}

\put(146,85){$\alpha_1$}
\put(146,60){$\alpha_2$}
\put(146,33){$\alpha_{n-1}$}
\put(146,22){$\alpha_n$}

\put(135,20){\line(0,1){9}}
\put(135,20){\line(-1,0){3}}
\put(135,29){\line(-1,0){3}}

\put(135,31){\line(0,1){9}}
\put(135,31){\line(-1,0){3}}
\put(135,40){\line(-1,0){3}}

\put(135,50){\line(0,1){19}}
\put(135,50){\line(-1,0){3}}
\put(135,69){\line(-1,0){3}}

\put(135,71){\line(0,1){29}}
\put(135,71){\line(-1,0){3}}
\put(135,100){\line(-1,0){3}}

\put(70,20){\line(1,0){50}}
\put(70,30){\line(1,0){10}}
\put(80,40){\line(1,0){10}}
\put(90,50){\line(1,0){10}}
\put(100,70){\line(1,0){10}}
\put(110,100){\line(1,0){10}}

\put(70,20){\line(0,1){10}}
\put(80,30){\line(0,1){10}}
\put(90,40){\line(0,1){10}}
\put(100,50){\line(0,1){20}}
\put(110,70){\line(0,1){30}}
\put(120,20){\line(0,1){80}}

\put(70,20){\dashbox{1}(50,80)[tl]{ }}

\end{picture}

Thus we have $l(\delta_{\alpha^r}) \leq l(\delta_{\alpha})$ and $w(\delta_{\alpha^r}) \leq w(\delta_{\alpha})$.
In particular, we have $|\alpha| = |\alpha^r| + (1-k)\alpha_n$.

\begin{proof} (of Theorem~\ref{rowcut})
The diagram $\delta_\alpha / (m)$ is contained in the rectangle $(n^{|\alpha|})$. 
Using Corollary~\ref{rectcor} we obtain $s_{\delta_\alpha / (m)} = c(s_{(m)} s_{\delta_{\alpha^r}})$. 
We note that $\alpha^r = (\alpha_{n-1}, \alpha_{n-2}, \ldots, \alpha_{1})$ since the width of the rectangle is $n$.

\

\

\

\

\

\

\

\

\

\

\setlength{\unitlength}{0.5mm}

\begin{picture}(100,60)(-50,-25)

\put(62,73){$\delta_{\alpha} / (m)$}

\put(96,15){$\Delta_{\alpha^r}$}
\put(22,60){$\alpha_{j+1}$}

\put(22,28){$\alpha_{j-1}$}
\put(25,100){$\alpha_n$}
\put(27,75){$\vdots$}
\put(25,43){$\alpha_j$}
\put(27,5){$\vdots$}

\put(25,-12){$\alpha_1$}

\put(40,20){\line(0,1){19}}
\put(40,20){\line(1,0){3}}
\put(40,39){\line(1,0){3}}

\put(40,40){\line(0,1){9}}
\put(40,40){\line(1,0){3}}
\put(40,49){\line(1,0){3}}

\put(40,50){\line(0,1){19}}
\put(40,50){\line(1,0){3}}
\put(40,69){\line(1,0){3}}

\put(40,91){\line(0,1){19}}
\put(40,91){\line(1,0){3}}
\put(40,110){\line(1,0){3}}

\put(40,-20){\line(0,1){20}}
\put(40,-20){\line(1,0){3}}
\put(40,0){\line(1,0){3}}

\put(70,20){\line(1,0){10}}
\put(80,40){\line(1,0){10}}
\put(90,50){\line(1,0){10}}
\put(100,70){\line(1,0){10}}
\put(110,90){\line(1,0){10}}
\put(50,100){\line(1,0){50}}

\put(100,110){\line(1,0){20}}
\put(100,100){\line(0,1){10}}

\put(60,0){\line(1,0){10}}

\put(70,0){\line(0,1){20}}

\put(80,20){\line(0,1){20}}
\put(90,40){\line(0,1){10}}
\put(100,50){\line(0,1){20}}
\put(110,70){\line(0,1){20}}
\put(120,90){\line(0,1){20}}
\put(50,-20){\line(0,1){120}}

\put(60,-20){\line(0,1){20}}
\put(50,-20){\line(1,0){10}}

\put(50,-20){\dashbox{1}(70,130)[tl]{ }}

\end{picture}

\

We can compute the product $s_{(m)} s_{\delta_{\alpha^r}}$ using Pieri's rule (\cite{stanley}). 
Thus we have 
\[s_{(m)} s_{\delta_{\alpha^r}} = \sum_\lambda s_\lambda,\]
where the sum is over all partitions $\lambda$ such that $\lambda / \delta_{\alpha^r}$ is a row strip with $m$ boxes. 
That is, the sum is over all partitions $\lambda$ such that the skew diagram $\lambda / \delta_{\alpha^r}$ has $m$ boxes, no two of which are in the same column. 

Since we have
\begin{eqnarray*}
s_{\delta_\alpha / (m)} &=& c(s_{(m)} s_{\delta_{\alpha^r}}) \\
&=& c(\sum_\lambda s_\lambda), \\
\end{eqnarray*}
we obtain
\begin{equation}
\label{pierirow}
s_{\delta_\alpha / (m)}= \sum_{\lambda \subseteq (n^{|\alpha|})} s_{\lambda^c}, 
\end{equation}
where the sum is over all partitions $\lambda \subseteq (n^{|\alpha|})$ such that the skew diagram $\lambda / \delta_{\alpha^r}$ has $m$ boxes, no two of which are in the same column.

\

Suppose there is a $j$, where $1 \leq j \leq n-1$, such that $\alpha_j =1$. 
We intend to show that $\delta_\alpha / (m)$ is not a sum of fat staircases. 
Namely, we will show that there is a term $s_{\nu}$ in Equation~\ref{pierirow} where $\nu$ is not a fat staircase. 

Since $\delta_{\alpha^r}$ is a fat staircase and we are constrained by $\lambda \subseteq (n^{|\alpha|})$, there are only $n$ possible positions to place the $m$ boxes. 
Namely, there is one position in the $r$-th row for each $r \in R_{\alpha^r,1}$. 
These $n$ possible positions are shown in the diagram below.

\

\

\

\

\

\

\

\

\

\setlength{\unitlength}{0.5mm}

\begin{picture}(100,60)(-40,-25)

\put(60,-10){\framebox(10,10){$*$}}
\put(70,10){\framebox(10,10){$*$}}
\put(80,30){\framebox(10,10){$*$}}
\put(90,40){\framebox(10,10){$*$}}
\put(100,60){\framebox(10,10){$*$}}
\put(110,80){\framebox(10,10){$*$}}
\put(120,100){\framebox(10,10){$*$}}

\put(82,73){$\delta_{\alpha^r}$}

\put(32,60){$\alpha_{j-1}$}

\put(32,28){$\alpha_{j+1}$}
\put(35,100){$\alpha_1$}
\put(37,75){$\vdots$}
\put(35,43){$\alpha_j$}
\put(37,5){$\vdots$}

\put(35,-12){$\alpha_n$}

\put(50,20){\line(0,1){19}}
\put(50,20){\line(1,0){3}}
\put(50,39){\line(1,0){3}}

\put(50,40){\line(0,1){9}}
\put(50,40){\line(1,0){3}}
\put(50,49){\line(1,0){3}}

\put(50,50){\line(0,1){19}}
\put(50,50){\line(1,0){3}}
\put(50,69){\line(1,0){3}}

\put(50,91){\line(0,1){19}}
\put(50,91){\line(1,0){3}}
\put(50,110){\line(1,0){3}}

\put(50,-20){\line(0,1){20}}
\put(50,-20){\line(1,0){3}}
\put(50,0){\line(1,0){3}}

\put(70,20){\line(1,0){10}}
\put(80,40){\line(1,0){10}}
\put(90,50){\line(1,0){10}}
\put(100,70){\line(1,0){10}}
\put(110,90){\line(1,0){10}}

\put(60,110){\line(1,0){60}}

\put(60,0){\line(1,0){10}}

\put(70,0){\line(0,1){20}}

\put(80,20){\line(0,1){20}}
\put(90,40){\line(0,1){10}}
\put(100,50){\line(0,1){20}}
\put(110,70){\line(0,1){20}}
\put(120,90){\line(0,1){20}}
\put(60,0){\line(0,1){110}}

\put(60,-20){\dashbox{1}(70,130)[tl]{ }}

\end{picture}

\

Since we must place $m \geq 1$ boxes, where $m<n$, we choose to place a box at the end of the row of length $\alpha_j$ and place the other $m-1 \leq n-2$ boxes in any of the other possible positions \textit{except for the position at the end of the first row of length $\alpha_{j+1}$}.
This gives a partition $\lambda \subseteq (n^{|\alpha|})$ such that $\lambda / \delta_{\alpha^r}$ has $m$ boxes, no two of which are in the same column.
Further, by the choice of placement of these boxes we see that $\lambda$ is not a fat staircase. 
Therefore $\lambda^c$ is also not a fat staircase. 
Since this term arises in Equation~\ref{pierirow}, this shows that $\delta_\alpha / (m)$ is not a sum of fat staircases.
This completes the first half of the proof.

\

Now suppose that $\alpha_j >1$ for each $j=1,2,\ldots,n-1$. 
We intend to show that $\delta_\alpha / (m)$ is a sum of fat staircases. 

As in the previous case, any term in Equation~\ref{pierirow} arises from placing the $m$ boxes among those $n$ possible positions.
Since $\alpha_j >1$ for each $j=1,2,\ldots,n-1$, it is clear that each of these placements results in a fat staircase.
Therefore $\delta_\alpha / (m)$ is a sum of fat staircases, as claimed. \qed
\end{proof}

\begin{example}
Consider the skew diagram $\delta_{(2,2,2)} / (2) = (3,3,2,2,1,1) / (2)$. 
Since $\alpha = (2,2,2)$ satisfies the hypotheses of Theorem~\ref{rowcut}, $\delta_{(2,2,2)} / (2)$ is a sum of fat staircases.
In particular, we have 
\begin{eqnarray*}
s_{\delta_{(2,2,2)} / (2)} &=& s_{(3,3,2,1,1)} + s_{(3,2,2,2,1)}+s_{(3,2,2,1,1,1)} \\
&=& s_{\delta_{(2,1,2)}} + s_{\delta_{(1,3,1)}} + s_{\delta_{(3,2,1)}}. \\
\end{eqnarray*}

\end{example}

\begin{proof} (of Theorem~\ref{colcut})
Again, the $\delta_\alpha / (1^m)$ is contained in the rectangle $(n^{|\alpha|})$. 
Using Corollary~\ref{rectcor} we obtain $s_{\delta_\alpha / (1^m)} = c(s_{(1^m)} s_{\delta_{\alpha^r}})$. 

\

\

\

\

\

\

\

\

\

\

\setlength{\unitlength}{0.5mm}

\begin{picture}(100,60)(-50,-25)

\put(66,78){$\delta_{\alpha} / (1^m)$}

\put(96,15){$\Delta_{\alpha^r}$}
\put(22,60){$\alpha_{j+1}$}

\put(22,28){$\alpha_{j-1}$}
\put(25,100){$\alpha_n$}
\put(27,75){$\vdots$}
\put(25,43){$\alpha_j$}
\put(27,5){$\vdots$}

\put(25,-12){$\alpha_1$}

\put(40,20){\line(0,1){19}}
\put(40,20){\line(1,0){3}}
\put(40,39){\line(1,0){3}}

\put(40,40){\line(0,1){9}}
\put(40,40){\line(1,0){3}}
\put(40,49){\line(1,0){3}}

\put(40,50){\line(0,1){19}}
\put(40,50){\line(1,0){3}}
\put(40,69){\line(1,0){3}}

\put(40,91){\line(0,1){19}}
\put(40,91){\line(1,0){3}}
\put(40,110){\line(1,0){3}}

\put(40,-20){\line(0,1){20}}
\put(40,-20){\line(1,0){3}}
\put(40,0){\line(1,0){3}}

\put(70,20){\line(1,0){10}}
\put(80,40){\line(1,0){10}}
\put(90,50){\line(1,0){10}}
\put(100,70){\line(1,0){10}}
\put(110,90){\line(1,0){10}}
\put(60,110){\line(1,0){50}}

\put(100,110){\line(1,0){20}}

\put(60,0){\line(1,0){10}}

\put(70,0){\line(0,1){20}}

\put(80,20){\line(0,1){20}}
\put(90,40){\line(0,1){10}}
\put(100,50){\line(0,1){20}}
\put(110,70){\line(0,1){20}}
\put(120,90){\line(0,1){20}}
\put(50,-20){\line(0,1){80}}

\put(50,60){\line(1,0){10}}
\put(60,60){\line(0,1){50}}

\put(60,-20){\line(0,1){20}}
\put(50,-20){\line(1,0){10}}

\put(50,-20){\dashbox{1}(70,130)[tl]{ }}

\end{picture}

\

We can compute the product $s_{(1^m)} s_{\delta_{\alpha^r}}$ using the column version of Pieri's rule (\cite{stanley}). 
Thus we have 
\[s_{(1^m)} s_{\delta_{\alpha^r}} = \sum_\lambda s_\lambda,\]
where the sum is over all partitions $\lambda$ such that $\lambda / \delta_{\alpha^r}$ is a column strip with $m$ boxes. 
That is, the sum over all partitions $\lambda$ such that the skew diagram $\lambda / \delta_{\alpha^r}$ has $m$ boxes, no two of which are in the same row. 

Since
\begin{eqnarray*}
s_{\delta_\alpha / (1^m)} &=& c(s_{(1^m)} s_{\delta_{\alpha^r}}) \\
&=& c(\sum_\lambda s_\lambda), \\
\end{eqnarray*}
we obtain
\begin{equation}
\label{piericol}
s_{\delta_\alpha / (1^m)}= \sum_{\lambda \subseteq (n^{|\alpha|})} s_{\lambda^c}, 
\end{equation}
where the sum is over all partitions $\lambda \subseteq (n^{|\alpha|})$ such that the skew diagram $\lambda / \delta_{\alpha^r}$ has $m$ boxes, no two of which are in the same row.

\

Suppose there is a $j$ such that $\alpha_j \leq m \leq |\alpha|-\alpha_{j+1}$. 
Then, necessarily, $1 \leq j \leq n-1$. 
We intend to show that $\delta_\alpha / (1^m)$ is not a sum of fat staircases. 
Namely, we will show that there is a term $s_{\nu}$ in Equation~\ref{piericol} where $\nu$ is not a fat staircase. 

Since $\delta_{\alpha^r}$ is a fat staircase and we are constrained by $\lambda \subseteq (n^{|\alpha|})$, the only possible positions to place the $m$ boxes are among the $|\alpha|$ positions shown below. 
For the remainder of the proof we shall refer to the columns of this column strip of possible positions as \textit{possible columns}. 
We note that when placing boxes in these possible columns, we must start from the top in order for the resulting diagram to be weakly decreasing in row lengths.

\

\

\

\

\

\

\

\

\

\

\setlength{\unitlength}{0.5mm}

\begin{picture}(100,60)(-40,-25)

\put(60,-10){\framebox(10,10){$*$}}
\put(60,-20){\framebox(10,10){$*$}}

\put(70,10){\framebox(10,10){$*$}}
\put(70,00){\framebox(10,10){$*$}}

\put(80,30){\framebox(10,10){$*$}}
\put(80,20){\framebox(10,10){$*$}}

\put(90,40){\framebox(10,10){$*$}}

\put(100,60){\framebox(10,10){$*$}}
\put(100,50){\framebox(10,10){$*$}}

\put(110,80){\framebox(10,10){$*$}}
\put(110,70){\framebox(10,10){$*$}}

\put(120,100){\framebox(10,10){$*$}}
\put(120,90){\framebox(10,10){$*$}}

\put(82,73){$\delta_{\alpha^r}$}

\put(32,60){$\alpha_{j-1}$}

\put(32,28){$\alpha_{j+1}$}
\put(35,100){$\alpha_1$}
\put(37,75){$\vdots$}
\put(35,43){$\alpha_j$}
\put(37,5){$\vdots$}

\put(35,-12){$\alpha_n$}

\put(50,20){\line(0,1){19}}
\put(50,20){\line(1,0){3}}
\put(50,39){\line(1,0){3}}

\put(50,40){\line(0,1){9}}
\put(50,40){\line(1,0){3}}
\put(50,49){\line(1,0){3}}

\put(50,50){\line(0,1){19}}
\put(50,50){\line(1,0){3}}
\put(50,69){\line(1,0){3}}

\put(50,91){\line(0,1){19}}
\put(50,91){\line(1,0){3}}
\put(50,110){\line(1,0){3}}

\put(50,-20){\line(0,1){20}}
\put(50,-20){\line(1,0){3}}
\put(50,0){\line(1,0){3}}

\put(70,20){\line(1,0){10}}
\put(80,40){\line(1,0){10}}
\put(90,50){\line(1,0){10}}
\put(100,70){\line(1,0){10}}
\put(110,90){\line(1,0){10}}

\put(60,110){\line(1,0){60}}

\put(60,0){\line(1,0){10}}

\put(70,0){\line(0,1){20}}

\put(80,20){\line(0,1){20}}
\put(90,40){\line(0,1){10}}
\put(100,50){\line(0,1){20}}
\put(110,70){\line(0,1){20}}
\put(120,90){\line(0,1){20}}
\put(60,0){\line(0,1){110}}

\put(60,-20){\dashbox{1}(70,130)[tl]{ }}

\end{picture}

Since we must place $m \geq \alpha_j$ boxes, where $m \leq |\alpha|-\alpha_{j+1}$, we choose to place a box at the end of each row of length $\alpha_j$ and place the other $m-\alpha_j \leq |\alpha|-\alpha_{j+1} -\alpha_j$ boxes in any of the other possible positions \textit{except for the positions at the end of the rows of length $\alpha_{j+1}$}.
This gives a partition $\lambda \subseteq (n^{|\alpha|})$ such that $\lambda / \delta_{\alpha^r}$ has $m$ boxes, no two of which are in the same row.
Further, by the choice of placement of these boxes we see that $\lambda$ is not a fat staircase. 
Therefore $\lambda^c$ is also not a fat staircase. 
Since this term arises in Equation~\ref{piericol}, this shows that $\delta_\alpha / (1^m)$ is not a sum of fat staircases.
This completes the first half of the proof.

\

Now suppose that there is no $j$ such that $\alpha_j \leq m \leq |\alpha| -\alpha_{j+1}$.
We intend to show that $\delta_\alpha / (1^m)$ is a sum of fat staircases. 

As in the previous case, any term in Equation~\ref{piericol} arises from placing the $m$ boxes among those $|\alpha|$ possible positions.
Since no $j$ satifies $\alpha_j \leq m \leq |\alpha| -\alpha_{j+1}$, there is no way to place these $m$ boxes so that a particular possible column is filled yet the possible column immediately to the left is empty. 
Since such a placement is the \textit{only} way to obtain a partition that is not a fat staircase, this shows that $\delta_\alpha / (1^m)$ is a sum of fat staircases. \qed
\end{proof}

\begin{example}
Consider the skew diagram $\delta_{(3,3,3)} / (1,1) = (3,3,3,2,2,2,1,1,1) / (1,1)$. 
Since $\alpha = (3,3,3)$ satisfies the hypotheses of Theorem~\ref{colcut}, $\delta_{(3,3,3)} /(1,1)$ is a sum of fat staircases.
In particular, we have 
\begin{eqnarray*}
s_{\delta_{(3,3,3)} / (1,1)} &=& s_{(3,3,3,2,2,2,1)} + s_{(3,3,3,2,2,1,1,1)}+s_{(3,3,3,2,1,1,1,1,1)} +s_{(3,3,2,2,2,2,1,1)} \\
&\textrm{}& +s_{(3,3,2,2,2,1,1,1,1)}+s_{(3,2,2,2,2,2,1,1,1)}\\
&=& s_{\delta_{(1,3,3)}} + s_{\delta_{(3,2,3)}} + s_{\delta_{(5,1,3)}}+ s_{\delta_{(2,4,2)}}+ s_{\delta_{(4,3,2)}}+ s_{\delta_{(3,5,1)}}. \\
\end{eqnarray*}

\end{example}

\section{Schur-Positivity}

For a sum of fat staircases $D = \rho / \kappa$ and a value $k \geq 0$, we consider the diagram $\mathcal{S}(\lambda,\mu, D ;k)$.
By assumption,  we have \[ s_D = \sum_{\nu = \textrm{{\footnotesize fat stair}}} c_{\kappa \nu}^{\rho} s_{\Delta_{ {\footnotesize \alpha(\nu)} }}  .\]
When we compute the skew Schur function  $s_{\mathcal{S}(\lambda,\mu, D ;k)}$ we notice that, since the subdiagram $D$ is strictly above the foundation $\lambda / \mu$, the contents that arise from the fillings of $D$ are precisely the fat stairs $\delta_{\alpha(\nu)}$, for each $\nu$ with $[s_\nu](s_D)\neq 0$.
For each of these fillings of $D$, we then need to extend them by filling the foundation $\lambda / \mu$ such that the resulting tableaux are SSYTx with lattice reading words.

Thus, when computing the Schur function of $\mathcal{S}(\lambda,\mu, D ;k)$, we are interested in the possible fillings of the following diagrams.

\setlength{\unitlength}{0.4mm}
\begin{picture}(100,60)(-30,0)

\put(113,-57){$\downarrow$}

\put(120,40){\framebox(10,10)[tl]{ }}
\put(130,40){\framebox(10,10)[tl]{ }}

\put(120,30){\framebox(10,10)[tl]{ }}

\put(120,20){\framebox(10,10)[tl]{ }}

\put(110,10){\framebox(10,10)[tl]{ }}
\put(120,10){\framebox(10,10)[tl]{ }}

\put(110,0){\framebox(10,10)[tl]{ }}
\put(120,0){\framebox(10,10)[tl]{ }}

\put(100,-10){\framebox(10,10)[tl]{ }}
\put(110,-10){\framebox(10,10)[tl]{ }}
\put(120,-10){\framebox(10,10)[tl]{ }}

\put(100,-20){\framebox(10,10)[tl]{ }}
\put(110,-20){\framebox(10,10)[tl]{ }}
\put(120,-20){\framebox(10,10)[tl]{ }}

\put(60,-20){\dashbox{1}(100,00)[tl]{ }}

\put(90,-30){\framebox(10,10)[tl]{ }}
\put(100,-30){\framebox(10,10)[tl]{ }}
\put(110,-30){\framebox(10,10)[tl]{ }}

\put(90,-40){\framebox(10,10)[tl]{ }}
\put(100,-40){\framebox(10,10)[tl]{ }}

\put(50,-90){\framebox(10,10)[tl]{ }}
\put(50,-100){\framebox(10,10)[tl]{ }}
\put(50,-110){\framebox(10,10)[tl]{ }}
\put(50,-120){\framebox(10,10)[tl]{ }}
\put(50,-130){\framebox(10,10)[tl]{ }}
\put(50,-140){\framebox(10,10)[tl]{ }}
\put(50,-150){\framebox(10,10)[tl]{ }}

\put(40,-120){\framebox(10,10)[tl]{ }}
\put(40,-130){\framebox(10,10)[tl]{ }}
\put(40,-140){\framebox(10,10)[tl]{ }}
\put(40,-150){\framebox(10,10)[tl]{ }}

\put(30,-140){\framebox(10,10)[tl]{ }}
\put(30,-150){\framebox(10,10)[tl]{ }}

\put(20,-150){\framebox(10,10)[tl]{ }}

\put(0,-150){\dashbox{1}(70,00)[tl]{ }}

\put(80,-150){\dashbox{1}(60,00)[tl]{ }}
\put(150,-150){\dashbox{1}(60,00)[tl]{ }}

\put(90,-160){\framebox(10,10)[tl]{ }}
\put(100,-160){\framebox(10,10)[tl]{ }}
\put(110,-160){\framebox(10,10)[tl]{ }}

\put(90,-170){\framebox(10,10)[tl]{ }}
\put(100,-170){\framebox(10,10)[tl]{ }}

\put(10,-160){\framebox(10,10)[tl]{ }}
\put(20,-160){\framebox(10,10)[tl]{ }}
\put(30,-160){\framebox(10,10)[tl]{ }}

\put(10,-170){\framebox(10,10)[tl]{ }}
\put(20,-170){\framebox(10,10)[tl]{ }}

\put(160,-160){\framebox(10,10)[tl]{ }}
\put(170,-160){\framebox(10,10)[tl]{ }}
\put(180,-160){\framebox(10,10)[tl]{ }}

\put(160,-170){\framebox(10,10)[tl]{ }}
\put(170,-170){\framebox(10,10)[tl]{ }}

\put(10,-70){\line(1,0){190}}

\put(10,-70){\line(0,-1){5}}
\put(200,-70){\line(0,-1){5}}

\put(120,-90){\framebox(10,10)[tl]{ }}
\put(120,-100){\framebox(10,10)[tl]{ }}
\put(120,-110){\framebox(10,10)[tl]{ }}
\put(120,-120){\framebox(10,10)[tl]{ }}
\put(120,-130){\framebox(10,10)[tl]{ }}
\put(120,-140){\framebox(10,10)[tl]{ }}
\put(120,-150){\framebox(10,10)[tl]{ }}

\put(110,-120){\framebox(10,10)[tl]{ }}
\put(110,-130){\framebox(10,10)[tl]{ }}
\put(110,-140){\framebox(10,10)[tl]{ }}
\put(110,-150){\framebox(10,10)[tl]{ }}

\put(100,-130){\framebox(10,10)[tl]{ }}
\put(100,-140){\framebox(10,10)[tl]{ }}
\put(100,-150){\framebox(10,10)[tl]{ }}

\put(190,-90){\framebox(10,10)[tl]{ }}
\put(190,-100){\framebox(10,10)[tl]{ }}
\put(190,-110){\framebox(10,10)[tl]{ }}
\put(190,-120){\framebox(10,10)[tl]{ }}
\put(190,-130){\framebox(10,10)[tl]{ }}
\put(190,-140){\framebox(10,10)[tl]{ }}
\put(190,-150){\framebox(10,10)[tl]{ }}

\put(180,-110){\framebox(10,10)[tl]{ }}
\put(180,-120){\framebox(10,10)[tl]{ }}
\put(180,-130){\framebox(10,10)[tl]{ }}
\put(180,-140){\framebox(10,10)[tl]{ }}
\put(180,-150){\framebox(10,10)[tl]{ }}

\put(170,-140){\framebox(10,10)[tl]{ }}
\put(170,-150){\framebox(10,10)[tl]{ }}

\end{picture}

\

\

\

\

\

\

\

\

\

\

\

\

\

\

\

\

\

\

\

The next result summarizes the relationship between these fillings.

\begin{theorem}
\label{sumoffattheorem}
Suppose a diagram $D= \rho / \kappa$ is such that
\[s_D = \sum_{\nu = \textrm{{\footnotesize fat stair}}} c_{\kappa \nu}^{\rho} s_\nu. \]
Then for each partitions $\lambda$ and $\mu$ with $\mu \subseteq \lambda$, and for each $k \geq 0$ we have
\[ s_{\mathcal{S}(\lambda, \mu , D;k)} \leq_s \sum_{\nu} c_{\kappa \nu}^{\rho}  s_{\mathcal{S}(\lambda, \mu , \alpha(\nu)^\triangleleft ;k)}. \]
\end{theorem}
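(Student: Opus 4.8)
The plan is to prove this coefficient by coefficient, reinterpreting both sides via the Littlewood--Richardson rule (Theorem~\ref{lr}) as counts of lattice tableaux and then building an injection from the tableaux counted by the left-hand side into those counted by the right-hand side. Fix a partition $\sigma$. Then $[s_\sigma]\,s_{\mathcal{S}(\lambda,\mu,D;k)}$ counts the lattice SSYTx of shape $\mathcal{S}(\lambda,\mu,D;k)$ with content $\sigma$; for each $\nu$, $c_{\kappa\nu}^{\rho}$ counts the lattice SSYTx of shape $D=\rho/\kappa$ with content $\nu$ (nonzero only when $\nu$ is a fat staircase, by the hypothesis on $D$), and $[s_\sigma]\,s_{\mathcal{S}(\lambda,\mu,\alpha(\nu)^\triangleleft;k)}$ counts the lattice SSYTx of shape $\mathcal{S}(\lambda,\mu,\alpha(\nu)^\triangleleft;k)$ with content $\sigma$. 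So it is enough to exhibit, for each $\sigma$, an injection $\Phi$ from the set of lattice SSYTx $\mathcal{T}$ of shape $\mathcal{S}(\lambda,\mu,D;k)$ and content $\sigma$ into the disjoint union, over all $\nu$, of the sets of pairs $(T_D,\mathcal{T}_\nu)$ with $T_D$ a lattice SSYT of shape $D$ and content $\nu$ and $\mathcal{T}_\nu$ a lattice SSYT of shape $\mathcal{S}(\lambda,\mu,\alpha(\nu)^\triangleleft;k)$ and content $\sigma$.

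First I would define $\Phi$. Given $\mathcal{T}$, let $T_D$ be its restriction to the top copy of $D$ and $T_F$ its restriction to the foundation $\lambda/\mu$. Since every row of $D$ lies strictly above every row of the foundation in $\mathcal{S}(\lambda,\mu,D;k)$, the reading word of $\mathcal{T}$ is the reading word of $T_D$ followed by that of $T_F$; in particular $T_D$ has a lattice reading word, so $c_{\kappa\nu}^{\rho}\ne 0$ for $\nu:=\mathrm{content}(T_D)$, whence $\nu$ is a fat staircase and $\nu=\delta_{\alpha(\nu)}$. I then let $\mathcal{T}_\nu$ be the tableau of shape $\mathcal{S}(\lambda,\mu,\alpha(\nu)^\triangleleft;k)$ that carries $T_F$ on the foundation and the unique lattice filling of $\Delta_{\alpha(\nu)}$ on top, and set $\Phi(\mathcal{T})=(T_D,\mathcal{T}_\nu)$. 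This is injective, since $T_D$ recovers $\mathcal{T}|_D$ and $\mathcal{T}_\nu$ recovers $T_F=\mathcal{T}|_{\lambda/\mu}$, and the two determine $\mathcal{T}$.

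The substance is to verify that $\mathcal{T}_\nu$ really is a lattice SSYT of content $\sigma$, and rather than check column-strictness at the junction by hand I would route this through Lemma~\ref{kfatjoinlemma}. Form the filled diagram $T:=(\lambda/\mu)\oplus\Delta_{\alpha(\nu)}$ carrying $T_F$ on $\lambda/\mu$ and the unique filling on $\Delta_{\alpha(\nu)}$; it is semistandard (under $\oplus$ the two summands share no column), and its reading word is the reading word of the $\Delta_{\alpha(\nu)}$-filling followed by that of $T_F$. Because the $\Delta_{\alpha(\nu)}$-filling and $T_D$ are both lattice with the same content $\nu$, while the reading word of $T_D$ followed by that of $T_F$ is lattice, a prefix-by-prefix comparison of letter counts shows the reading word of $T$ is lattice too; also the first row of the foundation has at most $k$ ones, since each of its $\lambda_1-\mu_1-k$ boxes that sit below $D$'s last row carries an entry $\ge 2$. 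Lemma~\ref{kfatjoinlemma} then says that shifting the foundation rightward converts $T$ into a lattice SSYT of shape $\mathcal{S}(\lambda,\mu,\alpha(\nu)^\triangleleft;k)$, and that tableau is exactly $\mathcal{T}_\nu$; finally $\mathcal{T}_\nu$ has content $\nu+\mathrm{content}(T_F)=\nu+(\sigma-\nu)=\sigma$, as required.

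Two small points remain. One needs $\mathcal{S}(\lambda,\mu,\alpha(\nu)^\triangleleft;k)$ to be defined, i.e.\ $\lambda_1-\mu_1-k\le l(\alpha(\nu))$; this follows from the argument of Lemma~\ref{kfatfirstrowlemma}, which shows the first row of the foundation consists of the at most $k$ ones together with \emph{distinct} values drawn from $R_{\alpha(\nu),k}\setminus\{1\}$, a set of size $l(\alpha(\nu))$. The main obstacle is precisely the claim that replacing the top diagram $D$ by the staircase $\Delta_{\alpha(\nu)}$ cannot destroy column-strictness along the row where the foundation is attached; handing this off to Lemma~\ref{kfatjoinlemma}, whose proof already contains the needed bound $r_{j+k}\ge 1+\sum_{i=1}^{j}\alpha_{n+1-i}$ on the entries of that row, is what keeps the argument clean.
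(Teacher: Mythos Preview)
Your argument is correct and follows essentially the same route as the paper's proof: both replace the $D$-part of a lattice SSYT by the unique filling of $\Delta_{\alpha(\nu)}$, verify via Lemma~\ref{kfatjoinlemma} (after passing through $\lambda/\mu \oplus \Delta_{\alpha(\nu)}$) that the result is still a lattice SSYT, and then bound the fibers by $c_{\kappa\nu}^{\rho}$. Your packaging as an injection into pairs $(T_D,\mathcal{T}_\nu)$ makes the fiber bound immediate, and you are a bit more explicit than the paper about the well-definedness condition $\lambda_1-\mu_1-k\le l(\alpha(\nu))$, but the underlying mechanism is identical.
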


\begin{proof}
Since $D$ is a sum of fat staircases, the content of each SSYT of shape $D$ with lattice reading word is some fat staircase $\nu$.
To prove the identity, we consider the map that takes a SSYT $T$ of shape $\mathcal{S}(\lambda, \mu , D;k)$ with lattice reading word and $c(D)=\nu$ to the tableau $T'$ of shape $\mathcal{S}(\lambda, \mu , \alpha(\nu)^\triangleleft ;k)$ obtained by filling the copy of $\Delta_{\alpha(\nu)}$ with content $\nu$ and filling the copy of $\lambda / \mu$ identically to its filling in $T$.

We claim that each tableau $T'$ is also a SSYT with lattice reading word.
Given such a tableau $T'$, consider the corresponding tableau $\mathcal{T}'$ of shape $\lambda / \mu \bigoplus \Delta_{\alpha(\nu)}$.
Then $\mathcal{T}'$ is a SSYT since both of the subtableaux of shape $\lambda / \mu$ and $\Delta_{\alpha(\nu)}$ are semistandard.
Further $\mathcal{T}'$ has a lattice reading word since checking the lattice condition within the subtableau of shape $\Delta_{\alpha(\nu)}$ is trivial, and then, by the construction of $T'$, the lattice condition after this point simply becomes identical to the lattice condition for $T$.
Therefore $\mathcal{T}'$ is a SSYT with lattice reading word.
Further, the first row of the subtableau $\lambda / \mu$ of $\mathcal{T}'$ contains at most $k$ 1's since $T$ was a SSYT. 
Hence, by Lemma~\ref{kfatjoinlemma}, $T'$ is also a SSYT with lattice reading word, exactly as claimed.

Thus, for each SSYT $T$ of shape $\mathcal{S}(\lambda, \mu , D;k)$ with lattice reading word and $c(D)=\nu$ we obtain a SSYT $T'$ of shape $\mathcal{S}(\lambda, \mu , \alpha(\nu)^\triangleleft ;k)$.
Now fix an image $T'$ and suppose that $T_1 \neq T_2$ are both SSYT of shape $\mathcal{S}(\lambda, \mu , D;k)$ with lattice reading word such that $T_1' = T' =T_2'$.
Let $\mathcal{S}(\lambda, \mu , \alpha(\nu)^\triangleleft ;k)$ be the shape of $T_1' =T'= T_2'$.
Then the filling of the foundation $\lambda / \mu$ in $T_1$ is the same as the filling of the foundation $\lambda / \mu$ in $T_2$, and the content of the subdiagram $D$ is $\nu$ for both $T_1$ and $T_2$.
Since there are only $c_{\kappa \nu}^{\rho}$ SSYT of shape $D$ with lattice reading word and content $\nu$, there are at most $c_{\kappa \nu}^{\rho}$ distinct SSYT of shape $\mathcal{S}(\lambda, \mu , D;k)$ with lattice reading word that could map to $T'$.
This implies that \[ s_{\mathcal{S}(\lambda, \mu , D;k)} \leq_s \sum_{\nu} c_{\kappa \nu}^{\rho}  s_{\mathcal{S}(\lambda, \mu , \alpha(\nu)^\triangleleft ;k)}, \]
as we desired.\qed

\end{proof}

\begin{example} Let $D = (2,2,2,2,1) / (1,1)$, $\lambda = (2,2)$, $\mu = \emptyset$ , and $k=1$.
Then \[ s_D = s_{(2,2,2,1)} + s_{(2,2,1,1,1)} = s_{\Delta_{(1,3)}} + s_{\Delta_{(3,2)}}\] is a sum of fat staircases.
Here we display $D$, $\Delta_{(1,3)}$, and $\Delta_{(3,2)}$.

\setlength{\unitlength}{0.4mm}

\begin{picture}(0,90)(-80,40)

\put(-35,55){$D$}
\put(65,55){$\Delta_{(1,3)}$}
\put(165,55){$\Delta_{(3,2)}$}

\put(-30,110){\framebox(10,10)[c]{ }}
\put(-30,100){\framebox(10,10)[c]{ } }
\put(-30,90){\framebox(10,10)[c]{ }}
\put(-30,80){\framebox(10,10)[c]{ } }

\put(-40,90){\framebox(10,10)[c]{ }}
\put(-40,80){\framebox(10,10)[c]{ } }
\put(-40,70){\framebox(10,10)[c]{ } }

\put(70,100){\framebox(10,10)[c]{ }}
\put(70,90){\framebox(10,10)[c]{ }}
\put(70,80){\framebox(10,10)[c]{ } }
\put(70,70){\framebox(10,10)[c]{ } }

\put(60,90){\framebox(10,10)[c]{ }}
\put(60,80){\framebox(10,10)[c]{ } }
\put(60,70){\framebox(10,10)[c]{ } }

\put(170,110){\framebox(10,10)[c]{ }}
\put(170,100){\framebox(10,10)[c]{ }}
\put(170,90){\framebox(10,10)[c]{ }}
\put(170,80){\framebox(10,10)[c]{ } }
\put(170,70){\framebox(10,10)[c]{ } }

\put(160,80){\framebox(10,10)[c]{ } }
\put(160,70){\framebox(10,10)[c]{ } }

\end{picture}

We are now interested in the diagrams $\mathcal{S}(\lambda, D;1)$, $\mathcal{S}(\lambda, (1,3)^\triangleleft;1)$, and $\mathcal{S}(\lambda, (3,2)^\triangleleft;1)$ for $\lambda = (2,2)$.

\setlength{\unitlength}{0.4mm}

\begin{picture}(0,110)(-80,20)

\put(-65,35){$\mathcal{S}(\lambda, D;1)$}
\put(35,35){$\mathcal{S}(\lambda, (1,3)^\triangleleft;1)$}
\put(135,35){$\mathcal{S}(\lambda, (3,2)^\triangleleft;1)$}

\put(-30,110){\framebox(10,10)[c]{ }}
\put(-30,100){\framebox(10,10)[c]{ } }
\put(-30,90){\framebox(10,10)[c]{ }}
\put(-30,80){\framebox(10,10)[c]{ } }

\put(-40,90){\framebox(10,10)[c]{ }}
\put(-40,80){\framebox(10,10)[c]{ } }
\put(-40,70){\framebox(10,10)[c]{ } }

\put(-40,60){\framebox(10,10)[c]{ } }
\put(-40,50){\framebox(10,10)[c]{ } }
\put(-50,60){\framebox(10,10)[c]{ } }
\put(-50,50){\framebox(10,10)[c]{ } }

\put(70,100){\framebox(10,10)[c]{ }}
\put(70,90){\framebox(10,10)[c]{ }}
\put(70,80){\framebox(10,10)[c]{ } }
\put(70,70){\framebox(10,10)[c]{ } }

\put(60,90){\framebox(10,10)[c]{ }}
\put(60,80){\framebox(10,10)[c]{ } }
\put(60,70){\framebox(10,10)[c]{ } }

\put(60,60){\framebox(10,10)[c]{ } }
\put(60,50){\framebox(10,10)[c]{ } }
\put(50,60){\framebox(10,10)[c]{ } }
\put(50,50){\framebox(10,10)[c]{ } }

\put(170,110){\framebox(10,10)[c]{ }}
\put(170,100){\framebox(10,10)[c]{ }}
\put(170,90){\framebox(10,10)[c]{ }}
\put(170,80){\framebox(10,10)[c]{ } }
\put(170,70){\framebox(10,10)[c]{ } }

\put(160,80){\framebox(10,10)[c]{ } }
\put(160,70){\framebox(10,10)[c]{ } }

\put(160,60){\framebox(10,10)[c]{ } }
\put(160,50){\framebox(10,10)[c]{ } }
\put(150,60){\framebox(10,10)[c]{ } }
\put(150,50){\framebox(10,10)[c]{ } }

\end{picture}

\noindent We can compute that 
\begin{eqnarray*}  s_{\mathcal{S}(\lambda,D;1)} &=& s_{(3,3,2,2,1)} + s_{(3,3,2,1,1,1)} + s_{(3,3,1,1,1,1,1)}+ s_{(3,2,2,2,2)}\\
&+&  s_{(3,2,2,2,1,1)} + s_{(3,2,2,1,1,1,1)}+ s_{(2,2,2,2,2,1)} + s_{(2,2,2,2,1,1,1)}\\
\end{eqnarray*}
and 
\begin{eqnarray*} s_{\mathcal{S}(\lambda,(1,3)^\triangleleft;1)}+s_{\mathcal{S}(\lambda,(3,2)^\triangleleft;1)} &=& 2 s_{(3,3,2,2,1)} + 2 s_{(3,3,2,1,1,1)} + s_{(3,3,1,1,1,1,1)}+ s_{(3,2,2,2,2)} \\
&+& 2 s_{(3,2,2,2,1,1)}+ s_{(3,2,2,1,1,1,1)}+ s_{(2,2,2,2,2,1)} + s_{(2,2,2,2,1,1,1)}, \\
\end{eqnarray*}
so we see that  \[ s_{\mathcal{S}(\lambda,D;1)} \leq_s s_{\mathcal{S}(\lambda,(1,3)^\triangleleft;1)}+s_{\mathcal{S}(\lambda,(3,2)^\triangleleft;1)}, \]
as predicted by Theorem~\ref{sumoffattheorem}.
\end{example}

\

\begin{corollary}
Suppose a diagram $D= \rho / \kappa$ is such that
\[s_D = \sum_{\nu = \textrm{{\footnotesize fat stair}}} c_{\kappa \nu}^{\rho} s_\nu. \]
Given a partition $\lambda$ and $k \geq 0$, let $\beta$ and $\theta$ be the partitions such that $\mathcal{S}(\lambda,\mu,  D;k) = \beta / \theta$. 
Then 
\[c(s_{\theta} s_{\beta^c}) \leq_s \sum_{\nu } c_{\kappa \nu}^{\rho}  s_{\mathcal{S}(\lambda, \mu, \alpha(\nu);k)}, \]
where $\beta^c$ is the complement of $\beta$ in the rectangle $({w(\beta / \theta)}^{l(\beta / \theta)})$.
\end{corollary}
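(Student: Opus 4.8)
The plan is to deduce this directly from Corollary~\ref{rectcor} and Theorem~\ref{sumoffattheorem}: rewriting $s_{\mathcal{S}(\lambda,\mu,D;k)}$ by complementation in a rectangle turns the inequality of Theorem~\ref{sumoffattheorem} into the one asserted here.

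First I would fix the geometry. By hypothesis $\mathcal{S}(\lambda,\mu,D;k) = \beta/\theta$ with $\beta,\theta$ partitions and $\theta\subseteq\beta$; set $a=l(\beta/\theta)$ and $b=w(\beta/\theta)$. The width of a skew shape is the largest part of its outer partition and its number of rows is the number of rows of that outer partition, so $\beta\subseteq (b^a)$ and Corollary~\ref{rectcor} applies to $\beta/\theta$ in the rectangle $(b^a)$. It gives
\[ s_{\mathcal{S}(\lambda,\mu,D;k)} = s_{\beta/\theta} = c(s_\theta s_{\beta^c}), \]
the truncated complement being taken in $(b^a) = (w(\beta/\theta)^{l(\beta/\theta)})$, which is exactly the rectangle named in the statement; in particular the operator $c(\cdot)$ appearing on the left of the claim is the one attached to this rectangle.

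Next I would apply Theorem~\ref{sumoffattheorem}, whose hypothesis on $D=\rho/\kappa$ is precisely the one assumed here, obtaining
\[ s_{\mathcal{S}(\lambda,\mu,D;k)} \leq_s \sum_{\nu} c_{\kappa\nu}^{\rho}\, s_{\mathcal{S}(\lambda,\mu,\alpha(\nu)^{\triangleleft};k)}, \]
and then substitute the identity of the previous paragraph into the left-hand side to get
\[ c(s_\theta s_{\beta^c}) \leq_s \sum_{\nu} c_{\kappa\nu}^{\rho}\, s_{\mathcal{S}(\lambda,\mu,\alpha(\nu)^{\triangleleft};k)}, \]
which is the claim. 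I expect the only real point requiring care to be the bookkeeping in the first step: one must check that writing $\mathcal{S}(\lambda,\mu,D;k)$ as a skew diagram $\beta/\theta$ does not introduce empty rows or columns that would make $l(\beta/\theta)$ or $w(\beta/\theta)$ differ from $l(\beta)$ or $\beta_1$, so that the rectangle to which Corollary~\ref{rectcor} is applied is genuinely $(w(\beta/\theta)^{l(\beta/\theta)})$, matching the complement used in the statement.
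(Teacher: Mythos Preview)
Your argument is correct and mirrors the paper's proof exactly: apply Corollary~\ref{rectcor} to rewrite $s_{\beta/\theta}$ as $c(s_\theta s_{\beta^c})$, then invoke Theorem~\ref{sumoffattheorem} and substitute. Your extra care about the rectangle dimensions is more than the paper spells out, but the route is the same.
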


\begin{proof}
From Corollary~\ref{rectcor} we have
\[ s_{\beta / \theta} = c(s_\theta s_{\beta^c}). \]
From Theorem~\ref{sumoffattheorem} we have
\[ s_{\mathcal{S}(\lambda, \mu, D;k)} \leq_s \sum_{\nu} c_{\kappa \nu}^{\rho}  s_{\mathcal{S}(\lambda,\mu, \alpha(\nu)^\triangleleft ;k)}. \]
Therefore 
\begin{eqnarray*}
c(s_\theta s_{\beta^c}) &=& s_{\beta / \theta} \\
&=& s_{\mathcal{S}(\lambda, \mu, D;k)} \\
&\leq_s& \sum_{\nu} c_{\kappa \nu}^{\rho}  s_{\mathcal{S}(\lambda, \mu, \alpha(\nu)^\triangleleft ;k)}. \qed\\
\end{eqnarray*}

\end{proof}

\

We shall finish this section with a result for the difference $s_{\mathcal{S}(\lambda^t, D;1)} - s_{\mathcal{S}(\lambda, D;1)}$, where $D$ is a sum of fat staircases and $\lambda$ is a single row. 
However, we begin by looking at the corresponding result using a fat staircase $\Delta_\alpha$.

\begin{theorem}
\label{1theorem}
If $\lambda$ is a partition with a single part, $0 \leq k \leq 1$, and $\lambda_1 - k \leq l(\alpha)$, then we have 
\[ s_{\mathcal{S}(\lambda^{t}, \alpha^\triangleleft;k)}-s_{\mathcal{S}(\lambda,\alpha^\triangleleft;k)} \geq_s 0. \] 
\end{theorem}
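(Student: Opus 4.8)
The plan is to prove the inequality coefficient by coefficient using the Littlewood--Richardson rule (Theorem~\ref{lr}): for each partition $\sigma$ I will exhibit an injection from the set of SSYTx of shape $\mathcal{S}(\lambda,\alpha^\triangleleft;k)$ with content $\sigma$ and lattice reading word into the set of SSYTx of shape $\mathcal{S}(\lambda^{t},\alpha^\triangleleft;k)$ with content $\sigma$ and lattice reading word. Since $[s_\sigma]\,s_{\mathcal{S}(\lambda^t,\alpha^\triangleleft;k)}$ and $[s_\sigma]\,s_{\mathcal{S}(\lambda,\alpha^\triangleleft;k)}$ are exactly the cardinalities of these two sets, such injections give the result.

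First I would set up the structure on the source side. In both $\mathcal{S}(\lambda,\alpha^\triangleleft;k)$ and $\mathcal{S}(\lambda^t,\alpha^\triangleleft;k)$ the subdiagram $\Delta_\alpha$ sits strictly above the foundation, which is the single row $\lambda=(\lambda_1)$ in the first case and the single column $\lambda^t=(1^{\lambda_1})$ in the second; as noted earlier, the lattice condition forces $\Delta_\alpha$ to be filled in the unique way that puts $1,2,\dots,l$ down each column of length $l$. Because $\lambda$ has a single part and $k\le 1$, Lemma~\ref{kfatfirstrowlemma} tells us that in any SSYT $T$ of shape $\mathcal{S}(\lambda,\alpha^\triangleleft;k)$ with lattice reading word the foundation row has strictly increasing entries $r_1<r_2<\dots<r_{\lambda_1}$, all lying in $R_{\alpha,k}$ (the value $1$ occurs at most $k\le 1$ times, every other value at most once, and the row is weakly increasing).

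The map is then the obvious one: send $T$ to the tableau $T'$ of shape $\mathcal{S}(\lambda^t,\alpha^\triangleleft;k)$ obtained by filling $\Delta_\alpha$ in the same way and filling the foundation column, from top to bottom, with $r_1,r_2,\dots,r_{\lambda_1}$. This preserves content and is clearly injective ($r_i$ is read off as the $i$-th entry of the column). That $T'$ is semistandard is immediate inside $\Delta_\alpha$ and along the foundation column, where strict increase is exactly $r_1<\dots<r_{\lambda_1}$; the only column-junction to check occurs when $k=0$, between $r_1$ and the bottom-left box of $\Delta_\alpha$, and there the required strict inequality is the same one that held in $T$, since the entry of $\Delta_\alpha$ immediately above is unchanged.

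The part I expect to require the most care is checking that $T'$ has a lattice reading word. Writing $w$ for the (lattice) reading word of the filled $\Delta_\alpha$, the reading word of $T$ is $w\cdot(r_{\lambda_1},r_{\lambda_1-1},\dots,r_1)$ and that of $T'$ is $w\cdot(r_1,r_2,\dots,r_{\lambda_1})$. So everything reduces to the following lemma, which I would prove separately: if $w$ is lattice and $a_1<a_2<\dots<a_m$ are distinct positive integers such that $w\cdot(a_m,a_{m-1},\dots,a_1)$ is lattice, then $w\cdot(a_1,a_2,\dots,a_m)$ is lattice. The key observation is that appending a value $v>1$ to a lattice word keeps it lattice precisely when, at that moment, the number of occurrences of $v-1$ strictly exceeds the number of occurrences of $v$ (appending $v=1$ never fails). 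When the $a_i$ are appended in decreasing order, at the moment $a_i$ is appended the counts of $a_i$ and of $a_i-1$ are still exactly their counts in $w$, since the previously appended letters $a_m,\dots,a_{i+1}$ all exceed $a_i$; hence latticeness of $w\cdot(a_m,\dots,a_1)$ says that in $w$ the count of $a_i-1$ strictly exceeds the count of $a_i$, for every $i$ with $a_i>1$. Appending the $a_i$ in increasing order instead leaves the count of $a_i$ unchanged at that moment while only possibly increasing the count of $a_i-1$, so the strict inequality survives and $w\cdot(a_1,\dots,a_m)$ is lattice. This makes $T'$ a genuine tableau of the required type, so the injection is complete and the theorem follows.
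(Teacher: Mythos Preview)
Your proof is correct and follows essentially the same approach as the paper: both construct the injection by transposing the foundation entries, verify semistandardness at the $k=0$ junction in the same way, and note injectivity by reading off the column. The only difference is in the justification of the lattice condition for $T'$: the paper simply observes that since all foundation entries lie in $R_{\alpha,k}$ the lattice property is ``clear,'' whereas you prove the slightly more general (and self-contained) lemma that reversing the order in which distinct letters are appended to a lattice word preserves latticeness---this is a cleaner way to fill in what the paper leaves implicit.
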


\begin{proof} 

\

\noindent

To prove the Schur-positivity of $s_{\mathcal{S}(\lambda^{t},\alpha^\triangleleft;k)}-s_{\mathcal{S}(\lambda,\alpha^\triangleleft;k)}$ we show that any SSYT $\mathcal{T}_1$ of shape $\mathcal{S}(\lambda,\alpha^\triangleleft;k)$ with lattice reading word gives rise to a SSYT $\mathcal{T}_2$ of shape $\mathcal{S}(\lambda^{t},\alpha^\triangleleft;k)$ with lattice reading word and the same content. 
We then show that we can recover $\mathcal{T}_1$ from $\mathcal{T}_2$.

Consider any SSYT $\mathcal{T}_1$ of shape $\mathcal{S}(\lambda,\alpha^\triangleleft;k)$ with lattice reading word. 
Lemma~\ref{kfatfirstrowlemma} implies that the foundation $\lambda$ contains a strictly increasing sequence $a_1<a_2<\ldots < a_{\lambda_1}$ where each $a_i \in R_{\alpha,k}$ and $a_{\lambda_1} \leq |\alpha|+1$ by the lattice condition. 

Since $a_1<a_2<\ldots < a_{\lambda_1}$, we can create a SSYT of shape $\lambda^{t}$ by transposing the entries of the foundation $\lambda$. 
We claim that the corresponding tableau $\mathcal{T}_2$ of shape $\mathcal{S}(\lambda^{t},\alpha^\triangleleft;k)$ is semistandard.
Since both $\Delta_\alpha$ and $\lambda^t$ are semistandard, we need only inspect where the columns of these subtableaux overlap.
These two diagrams only overlap when $k=0$, in which case the entry $a_1$ appears directly below some entry $x$.
However, when $k=0$, the entry $a_1$ appears below this same $x$ in $\mathcal{T}_1$. 
Since $\mathcal{T}_1$ is semistandard, this implies that $a_1>x$. 
Therefore $\mathcal{T}_2$ is semistandard. 
Furthermore, we may recover $\mathcal{T}_1$ from $\mathcal{T}_2$ by transposing the entries of the foundation $\lambda^{t}$ of $\mathcal{T}_2$.

Since every value in $\lambda^{t}$ is from $R_{\alpha,k}$, it is clear that $\mathcal{T}_2$ has a lattice reading word. 
This completes the proof that $s_{\mathcal{S}(\lambda^{t},\alpha^\triangleleft;k)}-s_{\mathcal{S}(\lambda,\alpha^\triangleleft;k)} \geq_s 0$.\qed

\end{proof}

\begin{theorem} 
\label{sumofdiff}
Let $D = \rho / \kappa$ be such that
\[ s_D= \sum_{\nu = \textrm{fat staircase}} c_{\kappa \nu}^{\rho} s_\nu,\] and $\lambda$ be a single row such that $\lambda_1-1 \leq$ the length of the last row of $D$. Then
\[s_{\mathcal{S}(\lambda^t, D;1)} - s_{\mathcal{S}(\lambda, D;1)} \geq_s \sum_{\nu} c_{\kappa \nu}^{\rho}  (s_{\mathcal{S}(\lambda^t,  \alpha(\nu)^{\triangleleft};1)}- s_{\mathcal{S}(\lambda,  \alpha(\nu)^{\triangleleft};1)}) \geq_s 0.\]
\end{theorem}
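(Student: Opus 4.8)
The plan is to combine the Schur-positivity inequality of Theorem~\ref{sumoffattheorem} with the single-diagram Schur-positivity of Theorem~\ref{1theorem}, carefully tracking that the two ``directions'' of comparison are compatible. Concretely, I would first establish the second inequality
\[ \sum_{\nu} c_{\kappa \nu}^{\rho}\bigl(s_{\mathcal{S}(\lambda^t, \alpha(\nu)^{\triangleleft};1)} - s_{\mathcal{S}(\lambda, \alpha(\nu)^{\triangleleft};1)}\bigr) \geq_s 0, \]
which is immediate: each $c_{\kappa\nu}^{\rho} \geq 0$, and for each fat staircase $\nu$ appearing in $s_D$ we have $\lambda_1 - 1 \leq$ length of the last row of $D$ $\leq l(\alpha(\nu))$, so Theorem~\ref{1theorem} (with $k=1$) applies to give $s_{\mathcal{S}(\lambda^t, \alpha(\nu)^{\triangleleft};1)} - s_{\mathcal{S}(\lambda, \alpha(\nu)^{\triangleleft};1)} \geq_s 0$ term by term.

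For the main (first) inequality, the idea is to mimic the proof of Theorem~\ref{sumoffattheorem} but carry along the foundation-transpose bijection of Theorem~\ref{1theorem}. I would consider a SSYT $\mathcal{T}$ of shape $\mathcal{S}(\lambda^t, D;1)$ with lattice reading word; since $D$ is a sum of fat staircases, the content of the subtableau on $D$ is some fat staircase $\nu$. Transposing the foundation $\lambda^t$ back to $\lambda$ (as in Theorem~\ref{1theorem}, which is reversible) and then collapsing the $D$-part down onto $\Delta_{\alpha(\nu)}$ (as in Theorem~\ref{sumoffattheorem}, which is at most $c_{\kappa\nu}^\rho$-to-one) produces a SSYT of shape $\mathcal{S}(\lambda, \alpha(\nu)^\triangleleft;1)$. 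The cleanest way to phrase this is as a chain: by Theorem~\ref{sumoffattheorem} applied to both shapes,
\[ s_{\mathcal{S}(\lambda^t, D;1)} \leq_s \sum_\nu c_{\kappa\nu}^\rho\, s_{\mathcal{S}(\lambda^t, \alpha(\nu)^\triangleleft;1)}, \qquad s_{\mathcal{S}(\lambda, D;1)} \leq_s \sum_\nu c_{\kappa\nu}^\rho\, s_{\mathcal{S}(\lambda, \alpha(\nu)^\triangleleft;1)}, \]
but these two inequalities point the ``wrong way'' to simply subtract. So instead I would build a single sign-respecting injection on the \emph{difference} level: realize $s_{\mathcal{S}(\lambda^t,D;1)} - s_{\mathcal{S}(\lambda,D;1)}$ as counting the SSYTx of shape $\mathcal{S}(\lambda^t,D;1)$ that are \emph{not} in the image of the foundation-transpose map from shape $\mathcal{S}(\lambda,D;1)$ (by Theorem~\ref{1theorem}'s argument, transposing the foundation embeds the $\mathcal{S}(\lambda,D;1)$ tableaux into the $\mathcal{S}(\lambda^t,D;1)$ tableaux, content-preservingly), and similarly for each $\mathcal{S}(\cdot,\alpha(\nu)^\triangleleft;1)$. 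Then the collapsing map of Theorem~\ref{sumoffattheorem} sends the ``extra'' tableaux of shape $\mathcal{S}(\lambda^t,D;1)$ into the ``extra'' tableaux of shape $\mathcal{S}(\lambda^t,\alpha(\nu)^\triangleleft;1)$, and is at most $c_{\kappa\nu}^\rho$-to-one, yielding the stated coefficientwise inequality.

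The step I expect to be the main obstacle is verifying that the two operations—transposing the foundation and collapsing $D$ onto $\Delta_{\alpha(\nu)}$—genuinely commute in the relevant sense, i.e.\ that the ``extra'' (non-image) tableaux are carried to ``extra'' tableaux and that the counting multiplicities combine correctly. The subtlety is that the foundation-transpose map of Theorem~\ref{1theorem} and the shape-collapse map of Theorem~\ref{sumoffattheorem} both touch the first row of the foundation (the former rearranges its entries, the latter relies on the ``at most $k=1$ ones'' condition from Lemma~\ref{kfatjoinlemma}), so I must check that after transposing, the first row of $\lambda$ still contains at most one $1$ — which holds because Lemma~\ref{kfatfirstrowlemma} tells us the foundation entries are the distinct values $a_1 < a_2 < \cdots$ with $1$ occurring at most once when $k=1$, and this property is preserved by transposition. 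Once that compatibility is nailed down, the content-preservation of both maps and the $c_{\kappa\nu}^\rho$-to-one bound give the first inequality, and the term-by-term application of Theorem~\ref{1theorem} gives the second, completing the proof.
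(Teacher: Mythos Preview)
Your treatment of the second inequality is fine and matches the paper: each term is Schur-positive by Theorem~\ref{1theorem} with $k=1$, and the coefficients are nonnegative.

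For the first inequality, however, your ``difference-level injection'' runs in the wrong direction. You propose to send the extra tableaux of shape $\mathcal{S}(\lambda^t,D;1)$ (those not hit by the foundation-transpose) into the extra tableaux of shape $\mathcal{S}(\lambda^t,\alpha(\nu)^\triangleleft;1)$ via the collapse map, which is at most $c_{\kappa\nu}^{\rho}$-to-one. But an at-most-$c$-to-one map from the $D$-side extras into the $\Delta$-side extras gives
\[
\bigl(\text{extras on the }D\text{-side}\bigr)\ \leq_s\ \sum_\nu c_{\kappa\nu}^{\rho}\bigl(\text{extras on the }\Delta_{\alpha(\nu)}\text{-side}\bigr),
\]
which is exactly the \emph{opposite} of the inequality you need. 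No amount of checking that the transpose and collapse maps commute will repair this; the issue is structural.

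What the paper does instead is observe that for the column foundation $\lambda^t$ with $k=1$, the inequality coming from Theorem~\ref{sumoffattheorem} is actually an \emph{equality}:
\[
s_{\mathcal{S}(\lambda^t, D;1)} \;=\; \sum_{\nu} c_{\kappa \nu}^{\rho}\, s_{\mathcal{S}(\lambda^t,\alpha(\nu)^{\triangleleft};1)}.
\]
The reason is that a single column placed with offset $k=1$ sits entirely to the left of $D$ and shares no column with it, so there is no column-strictness constraint between $D$ and the foundation; hence all $c_{\kappa\nu}^{\rho}$ fillings of $D$ of content $\nu$ combine with any fixed semistandard filling of $\lambda^t$ to give genuine SSYTx with lattice reading word. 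Once this equality is in hand, subtracting the genuine inequality $s_{\mathcal{S}(\lambda, D;1)} \leq_s \sum_\nu c_{\kappa\nu}^{\rho} s_{\mathcal{S}(\lambda,\alpha(\nu)^{\triangleleft};1)}$ from it yields the desired direction immediately. You should replace the difference-level injection with this equality argument.
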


\begin{proof}
Applying Theorem~\ref{sumoffattheorem} shows that 
\[ s_{\mathcal{S}(\lambda, D;1)} \leq_s \sum_{\nu} c_{\kappa \nu}^{\rho} s_{\mathcal{S}(\lambda,  \alpha(\nu)^{\triangleleft};1)}, \]
so we have 
\begin{equation}
\label{bit1}
-s_{\mathcal{S}(\lambda, D;1)} \geq_s -\sum_{\nu} c_{\kappa \nu}^{\rho} s_{\mathcal{S}(\lambda,  \alpha(\nu)^{\triangleleft};1)}. 
\end{equation}
We now intend to show that 
\begin{equation}
\label{bit2} 
s_{\mathcal{S}(\lambda^t, D;1)} = \sum_{\nu} c_{\kappa \nu}^{\rho} s_{\mathcal{S}(\lambda^t,  \alpha(\nu)^{\triangleleft};1)}. 
\end{equation}
Then, adding Equation~\ref{bit1} and Equation~\ref{bit2} will give that
\[s_{\mathcal{S}(\lambda^t, D;1)} - s_{\mathcal{S}(\lambda, D;1)} \geq_s \sum_{\nu} c_{\kappa \nu}^{\rho}  (s_{\mathcal{S}(\lambda^t,  \alpha(\nu)^{\triangleleft};1)}- s_{\mathcal{S}(\lambda,  \alpha(\nu)^{\triangleleft};1)}).\]

\

To prove Equation~\ref{bit2}, we first note that Theorem~\ref{sumoffattheorem} shows that 
\[ s_{\mathcal{S}(\lambda^t, D;1)} \leq_s \sum_{\nu} c_{\mu \nu}^{\rho} s_{\mathcal{S}(\lambda^t,  \alpha(\nu)^{\triangleleft};1)}. \]
We now recall how the proof of Theorem~\ref{sumoffattheorem} proceeded.
The proof looked at the map that took a SSYT $T$ of shape $\mathcal{S}(\lambda^t,D;1)$ with lattice reading word and $c(D)=\nu$ to the tableau $T'$ of shape $\mathcal{S}(\lambda^t,\alpha(\nu)^\triangleleft;1)$ whose foundation $\lambda^t$ was filled identically as in $T$. Then for each such image $T'$ the proof showed that 
\begin{enumerate}
\item $T'$ was a SSYT with lattice reading word and $c(T')=c(T)$, and 
\item there are at most $c_{\kappa \nu}^{\rho}$ distinct SSYT of shape $\mathcal{S}(\lambda^t,  D;1)$ that could map to $T'$.
\end{enumerate}
Further, the $c_{\kappa \nu}^{\rho}$ possible tableau of shape $\mathcal{S}(\lambda^t,  D;1)$ mentioned in 2 were the tableaux of shape $\mathcal{S}(\lambda^t,  D;1)$ where the filling of $\lambda^t$ was identical to the filling of $\lambda^t$ in $T'$ and the filling of $D$ was one of the $c_{\kappa \nu}^{\rho}$ semistandard fillings of $D$ with lattice reading word and content $\nu$.
We now check that each of these preimages of $T'$ \textit{is} a SSYT of shape $\mathcal{S}(\lambda^t,  D;1)$ with lattice reading word.

Let $\mathcal{T}$ be a preimage of $T'$. 
Then the subtableau of $\mathcal{T}$ of shape $D$ is semistandard and has lattice reading word since, as just mentioned, it is one of the $c_{\kappa \nu}^{\rho}$ semistandard fillings of $D$ with lattice reading word and content $\nu$. 
We also have that the foundation $\lambda^t$ of $\mathcal{T}$ is semistandard since it is filled indentically to the foundation of $T'$, which was semistandard.
Further, since $\lambda^t$ is a single column and $k=1$, the foundation of $\mathcal{S}(\lambda^t,  D;1)$ does not appear directly below any box of $D$.
Therefore $\mathcal{T}$ is a SSYT.
Also, $\mathcal{T}$ has a lattice reading word since $D$ has a lattice reading word and, since we have $c(D)=c(\Delta_{\alpha(\nu)})$, after reading $D$ the lattice restrictions on $\mathcal{T}$ are identical to the lattice restrictions on $T'$, which has a lattice reading word.

Hence, we have shown that each of these $c_{\kappa \nu}^{\rho}$ preimages of $T'$ is a SSYT of shape $\mathcal{S}(\lambda^t,  D;1)$ with lattice reading word.
This proves Equation~\ref{bit2}. That is, 
\[ s_{\mathcal{S}(\lambda^t, D;1)} = \sum_{\nu} c_{\kappa \nu}^{\rho} s_{\mathcal{S}(\lambda^t,  \alpha(\nu)^{\triangleleft};1)}. \]
We may now combine Equation~\ref{bit1} and Equation~\ref{bit2} to obtain
\[s_{\mathcal{S}(\lambda^t, D;1)} - s_{\mathcal{S}(\lambda, D;1)} \geq_s \sum_{\nu} c_{\kappa \nu}^{\rho}  (s_{\mathcal{S}(\lambda^t,  \alpha(\nu)^{\triangleleft};1)}- s_{\mathcal{S}(\lambda,  \alpha(\nu)^{\triangleleft};1)}).\]
Now, using Theorem~\ref{1theorem} on each pair $s_{\mathcal{S}(\lambda^t,  \alpha(\nu)^{\triangleleft};1)}$, $s_{\mathcal{S}(\lambda,  \alpha(\nu)^{\triangleleft};1)}$ gives
\[ s_{\mathcal{S}(\lambda^t,  \alpha(\nu)^{\triangleleft};1)}- s_{\mathcal{S}(\lambda,  \alpha(\nu)^{\triangleleft};1)} \geq_s 0,\]
for each fat staircase $\alpha(\nu)$.
Therefore, we now have that
\[s_{\mathcal{S}(\lambda^t, D;1)} - s_{\mathcal{S}(\lambda, D;1)} \geq_s \sum_{\nu} c_{\kappa \nu}^{\rho}  (s_{\mathcal{S}(\lambda^t,  \alpha(\nu)^{\triangleleft};1)}- s_{\mathcal{S}(\lambda,  \alpha(\nu)^{\triangleleft};1)}) \geq_s  0. \qed \] 
\end{proof}

\begin{example} 
Once again we consider the example with $\rho = (4,3,3,3,3,3,3)$, $\kappa = (2,2,2,1,1)$, and $D =  \rho / \kappa$.
We have previous seen that $D$ is a sum of fat staircases.
Namely, we have 
\begin{eqnarray*} 
s_D &=& s_{(4,3,2,2,1,1,1)} + s_{(3,3,3,2,1,1,1)} + s_{(3,3,2,2,2,1,1)} \\
&=&  s_{\Delta_{(3,2,1,1)}} + s_{\Delta_{(3,1,3)}} + s_{\Delta_{(2,3,2)}}.\\
\end{eqnarray*}

We now wish to apply Theorem~\ref{sumofdiff} to this diagram $D$ using $\lambda=(3)$. 
We are interested in appending $\lambda$ to $D$ and each of $\Delta_{(3,2,1,1)}$, $\Delta_{(3,1,3)}$, and $\Delta_{(2,3,2)}$ and  
also in appending $\lambda^t$ to $D$ and each of $\Delta_{(3,2,1,1)}$, $\Delta_{(3,1,3)}$, and $\Delta_{(2,3,2)}$.
Therefore, we are interested in the following diagrams.

\setlength{\unitlength}{0.4mm}
\begin{picture}(100,60)(-60,0)

\put(-30,40){\framebox(10,10)[tl]{ }}
\put(-20,40){\framebox(10,10)[tl]{ }}

\put(-30,30){\framebox(10,10)[tl]{ }}

\put(-30,20){\framebox(10,10)[tl]{ }}

\put(-30,10){\framebox(10,10)[tl]{ }}
\put(-40,10){\framebox(10,10)[tl]{ }}

\put(-40,0){\framebox(10,10)[tl]{ }}
\put(-30,0){\framebox(10,10)[tl]{ }}

\put(-50,-10){\framebox(10,10)[tl]{ }}
\put(-40,-10){\framebox(10,10)[tl]{ }}
\put(-30,-10){\framebox(10,10)[tl]{ }}

\put(-50,-20){\framebox(10,10)[tl]{ }}
\put(-40,-20){\framebox(10,10)[tl]{ }}
\put(-30,-20){\framebox(10,10)[tl]{ }}

\put(-70,-20){\dashbox{1}(60,00)[tl]{ }}

\put(-60,-30){\framebox(10,10)[tl]{ }}
\put(-50,-30){\framebox(10,10)[tl]{ }}
\put(-40,-30){\framebox(10,10)[tl]{ }}

\put(50,40){\framebox(10,10)[tl]{ }}
\put(50,30){\framebox(10,10)[tl]{ }}
\put(50,20){\framebox(10,10)[tl]{ }}
\put(50,10){\framebox(10,10)[tl]{ }}
\put(50,0){\framebox(10,10)[tl]{ }}
\put(50,-10){\framebox(10,10)[tl]{ }}
\put(50,-20){\framebox(10,10)[tl]{ }}

\put(40,10){\framebox(10,10)[tl]{ }}
\put(40,0){\framebox(10,10)[tl]{ }}
\put(40,-10){\framebox(10,10)[tl]{ }}
\put(40,-20){\framebox(10,10)[tl]{ }}

\put(30,-10){\framebox(10,10)[tl]{ }}
\put(30,-20){\framebox(10,10)[tl]{ }}

\put(20,-20){\framebox(10,10)[tl]{ }}

\put(0,-20){\dashbox{1}(70,00)[tl]{ }}

\put(80,-20){\dashbox{1}(60,00)[tl]{ }}
\put(150,-20){\dashbox{1}(60,00)[tl]{ }}

\put(90,-30){\framebox(10,10)[tl]{ }}
\put(100,-30){\framebox(10,10)[tl]{ }}
\put(110,-30){\framebox(10,10)[tl]{ }}

\put(10,-30){\framebox(10,10)[tl]{ }}
\put(20,-30){\framebox(10,10)[tl]{ }}
\put(30,-30){\framebox(10,10)[tl]{ }}

\put(160,-30){\framebox(10,10)[tl]{ }}
\put(170,-30){\framebox(10,10)[tl]{ }}
\put(180,-30){\framebox(10,10)[tl]{ }}

\put(120,40){\framebox(10,10)[tl]{ }}
\put(120,30){\framebox(10,10)[tl]{ }}
\put(120,20){\framebox(10,10)[tl]{ }}
\put(120,10){\framebox(10,10)[tl]{ }}
\put(120,0){\framebox(10,10)[tl]{ }}
\put(120,-10){\framebox(10,10)[tl]{ }}
\put(120,-20){\framebox(10,10)[tl]{ }}

\put(110,10){\framebox(10,10)[tl]{ }}
\put(110,0){\framebox(10,10)[tl]{ }}
\put(110,-10){\framebox(10,10)[tl]{ }}
\put(110,-20){\framebox(10,10)[tl]{ }}

\put(100,0){\framebox(10,10)[tl]{ }}
\put(100,-10){\framebox(10,10)[tl]{ }}
\put(100,-20){\framebox(10,10)[tl]{ }}

\put(190,40){\framebox(10,10)[tl]{ }}
\put(190,30){\framebox(10,10)[tl]{ }}
\put(190,20){\framebox(10,10)[tl]{ }}
\put(190,10){\framebox(10,10)[tl]{ }}
\put(190,0){\framebox(10,10)[tl]{ }}
\put(190,-10){\framebox(10,10)[tl]{ }}
\put(190,-20){\framebox(10,10)[tl]{ }}

\put(180,20){\framebox(10,10)[tl]{ }}
\put(180,10){\framebox(10,10)[tl]{ }}
\put(180,0){\framebox(10,10)[tl]{ }}
\put(180,-10){\framebox(10,10)[tl]{ }}
\put(180,-20){\framebox(10,10)[tl]{ }}

\put(170,-10){\framebox(10,10)[tl]{ }}
\put(170,-20){\framebox(10,10)[tl]{ }}

\end{picture}

\

\

\

\

\setlength{\unitlength}{0.4mm}
\begin{picture}(100,60)(-60,0)

\put(-30,40){\framebox(10,10)[tl]{ }}
\put(-20,40){\framebox(10,10)[tl]{ }}

\put(-30,30){\framebox(10,10)[tl]{ }}

\put(-30,20){\framebox(10,10)[tl]{ }}

\put(-30,10){\framebox(10,10)[tl]{ }}
\put(-40,10){\framebox(10,10)[tl]{ }}

\put(-40,0){\framebox(10,10)[tl]{ }}
\put(-30,0){\framebox(10,10)[tl]{ }}

\put(-50,-10){\framebox(10,10)[tl]{ }}
\put(-40,-10){\framebox(10,10)[tl]{ }}
\put(-30,-10){\framebox(10,10)[tl]{ }}

\put(-50,-20){\framebox(10,10)[tl]{ }}
\put(-40,-20){\framebox(10,10)[tl]{ }}
\put(-30,-20){\framebox(10,10)[tl]{ }}

\put(-70,-20){\dashbox{1}(60,00)[tl]{ }}

\put(-60,-30){\framebox(10,10)[tl]{ }}
\put(-60,-40){\framebox(10,10)[tl]{ }}
\put(-60,-50){\framebox(10,10)[tl]{ }}

\put(50,40){\framebox(10,10)[tl]{ }}
\put(50,30){\framebox(10,10)[tl]{ }}
\put(50,20){\framebox(10,10)[tl]{ }}
\put(50,10){\framebox(10,10)[tl]{ }}
\put(50,0){\framebox(10,10)[tl]{ }}
\put(50,-10){\framebox(10,10)[tl]{ }}
\put(50,-20){\framebox(10,10)[tl]{ }}

\put(40,10){\framebox(10,10)[tl]{ }}
\put(40,0){\framebox(10,10)[tl]{ }}
\put(40,-10){\framebox(10,10)[tl]{ }}
\put(40,-20){\framebox(10,10)[tl]{ }}

\put(30,-10){\framebox(10,10)[tl]{ }}
\put(30,-20){\framebox(10,10)[tl]{ }}

\put(20,-20){\framebox(10,10)[tl]{ }}

\put(0,-20){\dashbox{1}(70,00)[tl]{ }}

\put(80,-20){\dashbox{1}(60,00)[tl]{ }}
\put(150,-20){\dashbox{1}(60,00)[tl]{ }}

\put(90,-30){\framebox(10,10)[tl]{ }}
\put(90,-40){\framebox(10,10)[tl]{ }}
\put(90,-50){\framebox(10,10)[tl]{ }}

\put(10,-30){\framebox(10,10)[tl]{ }}
\put(10,-40){\framebox(10,10)[tl]{ }}
\put(10,-50){\framebox(10,10)[tl]{ }}

\put(160,-30){\framebox(10,10)[tl]{ }}
\put(160,-40){\framebox(10,10)[tl]{ }}
\put(160,-50){\framebox(10,10)[tl]{ }}

\put(120,40){\framebox(10,10)[tl]{ }}
\put(120,30){\framebox(10,10)[tl]{ }}
\put(120,20){\framebox(10,10)[tl]{ }}
\put(120,10){\framebox(10,10)[tl]{ }}
\put(120,0){\framebox(10,10)[tl]{ }}
\put(120,-10){\framebox(10,10)[tl]{ }}
\put(120,-20){\framebox(10,10)[tl]{ }}

\put(110,10){\framebox(10,10)[tl]{ }}
\put(110,0){\framebox(10,10)[tl]{ }}
\put(110,-10){\framebox(10,10)[tl]{ }}
\put(110,-20){\framebox(10,10)[tl]{ }}

\put(100,0){\framebox(10,10)[tl]{ }}
\put(100,-10){\framebox(10,10)[tl]{ }}
\put(100,-20){\framebox(10,10)[tl]{ }}

\put(190,40){\framebox(10,10)[tl]{ }}
\put(190,30){\framebox(10,10)[tl]{ }}
\put(190,20){\framebox(10,10)[tl]{ }}
\put(190,10){\framebox(10,10)[tl]{ }}
\put(190,0){\framebox(10,10)[tl]{ }}
\put(190,-10){\framebox(10,10)[tl]{ }}
\put(190,-20){\framebox(10,10)[tl]{ }}

\put(180,20){\framebox(10,10)[tl]{ }}
\put(180,10){\framebox(10,10)[tl]{ }}
\put(180,0){\framebox(10,10)[tl]{ }}
\put(180,-10){\framebox(10,10)[tl]{ }}
\put(180,-20){\framebox(10,10)[tl]{ }}

\put(170,-10){\framebox(10,10)[tl]{ }}
\put(170,-20){\framebox(10,10)[tl]{ }}

\end{picture}

\

\

\

\

\

\

Theorem~\ref{sumofdiff} states that 
\[s_{\mathcal{S}(\lambda^t, D;1)} - s_{\mathcal{S}(\lambda, D;1)} \geq_s \sum_{\nu} c_{\kappa \nu}^{\rho}  (s_{\mathcal{S}(\lambda^t,  \alpha(\nu)^{\triangleleft};1)}- s_{\mathcal{S}(\lambda,  \alpha(\nu)^{\triangleleft};1)}) \geq_s 0. \]\\
In fact if we compute both

\begin{eqnarray*}
\sum_{\nu} c_{\kappa \nu}^{\rho}  (s_{\mathcal{S}(\lambda^t,  \alpha(\nu)^{\triangleleft};1)}- s_{\mathcal{S}(\lambda,  \alpha(\nu)^{\triangleleft};1)}) 
&=&  1(s_{\mathcal{S}(\lambda^t,  (3,2,1,1)^{\triangleleft};1)}- s_{\mathcal{S}(\lambda,  (3,2,1,1)^{\triangleleft};1)}) \\
&+&  1(s_{\mathcal{S}(\lambda^t,  (3,1,3)^{\triangleleft};1)}- s_{\mathcal{S}(\lambda,  (3,1,3)^{\triangleleft};1)}) \\
&+&  1(s_{\mathcal{S}(\lambda^t,  (2,3,2)^{\triangleleft};1)}- s_{\mathcal{S}(\lambda,  (2,3,2)^{\triangleleft};1)}) \\
\end{eqnarray*}
and 
\[s_{\mathcal{S}(\lambda^t, D;1)} - s_{\mathcal{S}(\lambda, D;1)},\]
then the difference is given by
\begin{eqnarray*}
&& \left( s_{\mathcal{S}(\lambda^t, D;1)} - s_{\mathcal{S}(\lambda, D;1)} \right) - \left(  \sum_{\nu} c_{\kappa \nu}^{\rho}  (s_{\mathcal{S}(\lambda^t,  \alpha(\nu)^{\triangleleft};1)}- s_{\mathcal{S}(\lambda,  \alpha(\nu)^{\triangleleft};1)})   \right) \\
 &=& s_{(5,4,3,2,1,1,1)} + s_{(5,4,2,2,2,1,1)}+s_{(5,4,2,2,1,1,1,1)}. \\
\end{eqnarray*}
Further, one can also check that 
\[\sum_{\nu} c_{\mu \nu}^{\rho}  (s_{\mathcal{S}(\lambda^t,  \alpha(\nu)^{\triangleleft};1)}- s_{\mathcal{S}(\lambda,  \alpha(\nu)^{\triangleleft};1)}) \geq_s 0. \]

\end{example}

\end{document}